\newtheorem{theorem}{Theorem}[section]
\newtheorem*{theorem*}{Theorem}
\theoremstyle{plain}
\newtheorem{lemma}[theorem]{Lemma}
\theoremstyle{definition}
\newtheorem{definition}[theorem]{Definition}
\newtheorem{proposition}[theorem]{Proposition}
\newtheorem{observation}[theorem]{Observation}
\newtheorem*{observation*}{Observation}
\newtheorem{fact}[theorem]{Fact}
\newtheorem*{fact*}{Fact}
\newtheorem{remark}[theorem]{Remark}
\newtheorem*{remark*}{Remark}
\newtheorem{question}{Question}
\newcommand{\ef}{\e}
\newcommand{\base}{basis}
\newcommand{\bases}{bases}
\renewcommand{\bot}{{\base\ of order $2$}}
\newcommand{\bra}[1]{ \left( #1 \right) }
\newcommand{\abs}[1]{\left|#1\right|}
\newcommand{\fp}[1]{\left\{ #1 \right\}}
\newcommand{\norm}[1]{\left\lVert #1 \right\rVert}
\newcommand{\normLip}[1]{\left\lVert #1 \right\rVert_{\mathrm{Lip}}}
\newcommand{\fpa}[1]{\left\lVert #1 \right\rVert_{\mathbb{R}/\mathbb{Z}}}
\newcommand{\set}[2]{\left\{ #1 \ \middle| \ #2 \right\} }
\newcommand{\tendsto}[1]{\xrightarrow[#1]{}}
\renewcommand{\k}{\kappa}
\newcommand{\e}{\varepsilon}
\renewcommand{\a}{\alpha}
\renewcommand{\b}{\beta}
\newcommand{\de}{\delta}
\newcommand{\cC}{\mathscr{C}}
\newcommand{\cN}{\mathcal{N}}
\newcommand{\cA}{\mathcal{A}}
\newcommand{\cB}{\mathcal{B}}
\newcommand{\Borel}{\mathscr{B}}
\newcommand{\NN}{\mathbb{N}}
\newcommand{\QQ}{\mathbb{Q}}
\newcommand{\PP}{\mathbb{P}}
\newcommand{\EE}{\mathbb{E}}
\newcommand{\ZZ}{\mathbb{Z}}
\newcommand{\RR}{\mathbb{R}}
\newcommand{\TT}{\mathbb{T}}
\newcommand{\densNat}{\mathrm{d}}
\newcommand{\Mod}[1]{\bmod{#1}}
\newcommand{\A}[2]{\cA_{#1}^{#2} }
\numberwithin{equation}{section}
\begin{document}

\begin{abstract}

	We study sets of the form $\cA = \big\{ n \in \NN \big| \fpa{p(n)} \leq \e(n) \big\}$ for various real valued polynomials $p$ and decay rates $\e$. In particular, we ask when such sets are \bases\ of finite order for the positive integers.
	
	We show that generically, $\cA$ is a \bot\ when $\deg p \geq 3$, but not when $\deg p = 2$, although then $\cA + \cA$ still has asymptotic density $1$.  
\end{abstract}

\title[Sets of recurrence as bases]{Sets of recurrence as bases for the positive integers}

\author[J. Konieczny]{Jakub Konieczny}
\address{Mathematical Institute \\ 
University of Oxford\\
Andrew Wiles Building \\
Radcliffe Observatory Quarter\\
Woodstock Road\\
Oxford\\
OX2 6GG}
\email{jakub.konieczny@gmail.com}

\date{}

\subjclass[2010]{Primary 11J54, Secondary 11P99}

\keywords{additive basis, set of recurrence, Nil-Bohr set, small fractional parts}

\maketitle

\setcounter{section}{0}

\section*{Introduction}

Let $p(n)$ be a real polynomial and let $\e(n) > 0$ be a slowly decaying function. We consider the sets
$$
	\cA = \set{n \in \NN}{ \fpa{p(n)} \leq \e(n) },
$$
where $\fpa{t} = \min_{n \in \ZZ} \abs{t - n}$ denotes the distance to the nearest integer and $\NN = \{0,1,2\dots\}$. 

Our particular concern will be with the additive properties of such sets. Specifically, when is $\cA$  an \base\ for $\NN$ of a given finite order? That is, for which $k$, if any, is it true that the sumset
$$ k {} \cA = \cA + \cdots + \cA = \set{ n_1 + \cdots + n_k}{n_i \in \cA}$$
contains all sufficiently large integers? We will also be interested in when $\cA$ is an \emph{almost} \base\ of order $k$, by which we mean that $k {} \cA$ has asymptotic density $\densNat(k\cA)$ equal to $1$. Here, asymptotic density of a set $\cB$ is defined as 
$$
	\densNat(\cB) = \lim_{n\to \infty} \frac{\abs{ \cB \cap [n] }}{n},
$$
provided that the limit exists. We use the the symbol $[n]$ to denote the set $\{1,2,\dots,n\}$.

We consider two types of behaviour of $\e(n)$: we either demand that $\e(n) \to 0$, or that $\e(n)$ is bounded pointwise by a suitably small constant $\e_0$ (in which case we may equally well assume that $\e(n) = \e_0$). This technical issue will appear at various points in the paper. 

In the case when $\deg p = 1$, the problem is rather straightforward. We are essentially dealing with $\mathrm{Bohr}$ sets, which are simple and well studied objects (see e.g. \cite[Chapter 4.4]{Tao2006}). We expect that the sets $k {} \cA$ should not be significantly larger than $\cA$, and hence that $\cA$ should not be a \base\ of any order for sufficiently small $\e$.

It is an easy exercise to show that for any $k$ the set 
	$$ 
		\cA =  \set{n \in \NN}{ \fpa{ \a n } \leq \e(n) }
	$$
is not a \base\ of order $k$ provided that, say, $\alpha \in \mathbb{R} \setminus \mathbb{Q}$ and $\e(n) < \frac{1}{3k}$ for all $n$. Indeed, it follows easily from the observation that $\fpa{ N \a } < \frac{1}{3}$ for $N \in k \cA$. Similarly, one can show that $\cA$ defined above is not a \base\ of order $k$ if $\e(n) \to 0$ as $n \to \infty$. We leave the details to the interested reader.
 
The problem is most interesting when $\deg p = 2$. One might expect $\cA$ to behave roughly as a random set such that $n \in \cA$ with probability $\e(n)$, and hence to be a \base\ of finite order if $\e(n)$ decays reasonably slowly. A particular case of this problem was considered by Erd\H{o}s, who asked the following\footnote{Personal communication from Ben Green; no written reference could be located.}.

\begin{question}
	Is the set $\cA =  \set{n \in \NN}{ \fpa{ \sqrt{2} n^2 } \leq \frac{1}{\log n }}$ a \base\ of order $2$?
\end{question}

Somewhat unexpectedly, the answer to this question is negative. One can even produce an explicit sequence $N_i = \frac{(3+2\sqrt{2})^{2i+1} - (3-2\sqrt{2})^{2i+1}}{2 \sqrt{2} }$ such that $N_i \not \in 2 \cA$ for all sufficiently large $i$. 

Several other constructions of this type are possible, each leading to a sequence $N_i \not \in 2 \cA$ with $N_i$ growing exponentially with $i$.  Hence, one may hope that the following weaker variant should have a positive answer. Recall that we call $\cA$ an almost \base\ of order $2$ if $\densNat(2\cA) = 1$.

\begin{question}
	Is the set $\cA =  \set{n \in \NN}{ \fpa{ \sqrt{2} n^2 } \leq \frac{1}{\log n }}$ an almost  \base\ of order $2$?
\end{question}

This is indeed the case. In fact, we can prove a stronger statement concerning the size of the complement $(2\cA)^c = \NN \setminus 2\cA$, namely that as $T \to \infty$ we have $\abs{ [T] \setminus 2\cA} \ll \log^C T$, where $C$ is a constant.

Here and elsewhere, we use the Vinogradov notation $f \ll g$, as well as the more standard $f = O(g)$, to denote the statement that $f \leq C g$, for some constant $C$. When $C$ depends on a parameter $A$, we write $f \ll_{A} g$ or $f = O_A(g)$. If $f = O(g)$ and $g = O(f)$, we write $f = \Theta(g)$. 

In larger generality, we have the following collection of results.

\newtheorem*{theoremA}{Theorem A}
\begin{theoremA}\label{intro::thm:A}
	Let $\e(n)$ be a slowly-decaying function, let $\a \in \RR \setminus \QQ$ and set:
	$$ 
		\cA =  \set{n \in \NN}{ \fpa{ \a n^2 } \leq \e(n) }.
	$$
	Then the following are true:
	\begin{enumerate}[label=\textbf{A\arabic*.}, ref=\textbf{A\arabic*}]
	\item\label{intro::thm:A:1} 
	For any $\a \in \RR \setminus \QQ$, $\cA$ is an almost \base\ of order $2$, provided that $\e(n)$ decays slowly enough. 

	\item\label{intro::thm:A:3} Moreover, for uncountably many exceptional values of $\a$, $\cA$ is a \base\ of order $2$, provided that $\e(n)$ decays slowly enough. 

	\item\label{intro::thm:A:4} In particular, for any $\a \in \RR \setminus \QQ$, $\cA$ is a \base\ of order $3$, provided that $\e(n)$ decays slowly enough.

	\item\label{intro::thm:A:2} However, for almost all $\a$, $\cA$ is not a \base\ of order $2$, as long as $\e(n) \to 0$. 
	
	\end{enumerate}
			
\end{theoremA}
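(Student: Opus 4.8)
These four statements are naturally proved as follows: \ref{intro::thm:A:1} (the almost-\base\ property) by the Hardy--Littlewood circle method; \ref{intro::thm:A:4} (a \base\ of order $3$) as an immediate corollary; \ref{intro::thm:A:2} (the negative result) via an explicit algebraic obstruction together with metric number theory; and \ref{intro::thm:A:3} (the uncountable exceptional family) by refining the circle-method analysis of \ref{intro::thm:A:1} at the troublesome scales. The common input is a good bound for the exponential sums $\sum_{a \le X} e\bra{k \a a^2 + \ell a}$: trivial for $k = 0$, and Weyl's bound --- $o(X)$ always, with a power saving when $\a$ has finite type --- for $k \ne 0$.

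For \ref{intro::thm:A:1}, write $r(N) = \#\set{(a,b) \in \cA^2}{a + b = N}$ and, using the Beurling--Selberg majorant and minorant $\phi^{\pm}$ of $\mathbf 1_{[-\e(N),\e(N)]}$ with Fourier support in $[-K,K]$, $K \asymp 1/\e(N)$, estimate $\sum_a \phi^{\pm}(\a a^2)\phi^{\pm}(\a(N-a)^2)$, with the customary care needed to lower-bound a bilinear form in two copies of $\mathbf 1_\cA$ by a minorant that changes sign. Via $\a a^2 + \a(N-a)^2 = 2\a a^2 + \a N^2 - 2\a N a$, the Fourier modes $(k,\ell)$ with $k + \ell = 0$ produce a \emph{linear} sum $e(-k\a N^2)\sum_{a\le N}e(2k\a N a)$ whose $k = 0$ term is the main term $\asymp \e(N)^2 N$ and whose other terms are $\ll \sum_{k \ne 0}\min(\e,1/\abs{k})^2 \min(N, \fpa{2k\a N}^{-1})$; the modes with $k + \ell \ne 0$ give quadratic Weyl sums in $a$ with leading coefficient $(k+\ell)\a$, hence $o(N)$, and remain $o(\e(N)^2 N)$ after summation against the Fourier weights once $\e$ decays slowly. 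When $\e$ decays slowly enough --- in a sense depending on the Diophantine type of $\a$, with any sufficiently small constant always admissible --- both error contributions are $o(\e(N)^2 N)$ for every $N$ outside the exceptional set $\mathcal E = \set{N}{\fpa{2\a N} < \psi(N)/N}$ for a suitable slowly growing $\psi$. Thus $r(N) \sim c\,\e(N)^2 N > 0$ for $N \notin \mathcal E$, while $\mathcal E$ is sparse --- $\abs{\mathcal E \cap [T]} \ll_\a \psi(T)^2 \log T$ by the three-distance theorem --- and in particular has density $0$; hence $\densNat(2\cA) = 1$, and for $\a$ of finite type even $\abs{[T] \setminus 2\cA} \ll \log^C T$, as announced for Erd\H{o}s's question.

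Statement \ref{intro::thm:A:4} then follows at once: $\abs{\cA \cap [1, N/2]} \gg \e(N) N$ by Weyl equidistribution, whereas $(2\cA)^c \cap [N/2, N]$ has size $o(\e(N)N)$ by the sparseness above, so for large $N$ some $a \in \cA \cap [1,N/2]$ satisfies $N - a \in 2\cA$, i.e. $N \in 3\cA$. For \ref{intro::thm:A:2} the obstruction is algebraic: since $a^2 + b^2 = 2a^2 + N(N - 2a)$ when $a + b = N$, writing $2\a N = k + \de$ with $k \in \ZZ$, $\abs{\de} = \fpa{2\a N}$, gives $\a(a^2+b^2) \equiv 2\a a^2 + \tfrac12 kN - ka + \tfrac12\de(N-2a) \pmod 1$. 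If $N$ and $k$ are both odd then $\tfrac12 kN \equiv \tfrac12$, $ka \in \ZZ$, and $\abs{\tfrac12\de(N-2a)} \le \tfrac12 \fpa{2\a N} N$; so if also $\fpa{2\a N} N$ is small, then for \emph{every} $a$ the point $\a(a^2+b^2)$ is close to $2\a a^2 + \tfrac12$, whose distance to $\ZZ$ is $\ge \tfrac12 - 2\e(a) + o(1)$, yet $\fpa{\a a^2} + \fpa{\a b^2} \le \e(a) + \e(b) \to 0$ --- a contradiction, so $N \notin 2\cA$. One must exhibit, for almost every $\a$, infinitely many such $N$: $N$ odd, $\mathrm{round}(2\a N)$ odd, $\fpa{2\a N}N$ small. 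Khinchin's theorem applied to $2\a$ yields infinitely many convergent denominators $q_j$ of $2\a$ with $q_j \fpa{2\a q_j} \to 0$; taking $N$ an odd multiple of such $q_j$ keeps $\fpa{2\a N}N$ small, and the parity requirement becomes the existence of infinitely many convergents $p_j/q_j$ of $2\a$ with $p_j, q_j$ both odd. Following $(p_j, q_j) \bmod 2$ through $\binom{p_j}{q_j} = a_j \binom{p_{j-1}}{q_{j-1}} + \binom{p_{j-2}}{q_{j-2}}$ shows the state $(1,1)$ recurs with positive frequency unless the partial quotients of $2\a$ are eventually all even (a null set); since ``$a_{j+1}$ large'' depends on coordinates of the continued fraction disjoint from those fixing the parity state, ergodicity of the Gauss map realizes both conditions simultaneously infinitely often for a.e. $\a$.

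For \ref{intro::thm:A:3} one wants $2\cA$ cofinite rather than merely of density $1$, so the set $\mathcal E$ of \ref{intro::thm:A:1} must be made finite. The difficulty is that $\mathcal E$ always contains the convergent denominators $q_j$ of $2\a$ (there $\fpa{2\a q_j}q_j \asymp 1/a_{j+1} \le 1$), at which the main term of $r(q_j)$ is a sum of $\asymp 1/\e$ terms of comparable size $\asymp \e^2 q_j$ carrying phases $e(-k\a q_j^2)$ and amplitudes $\asymp \min(q_j, \fpa{2k\a q_j}^{-1})$, whose net sign cannot be controlled in general --- and for $\a = \sqrt 2$ it genuinely turns out non-positive at the Pell numbers $N_i$, giving $N_i \notin 2\cA$. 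The plan is to construct $\a$ by a Cantor-type recursion on its continued-fraction expansion: at stage $j$ finitely many admissible choices for the next partial quotient (over a slowly growing window), uncountably many branches in total, each choice steering the tail of the expansion --- hence the phases $\a q_j^2 \bmod 1$ and the amplitudes at $q_j$ and at the finitely many nearby members of $\mathcal E$ --- so that the main term of $r(N)$ stays $\gg \e(N)^2 N$ there, while remaining compatible with some (very slowly) decaying $\e$. Iterating produces uncountably many $\a$ with $\mathcal E$ finite, hence with $2\cA$ cofinite. The decisive difficulties are, for \ref{intro::thm:A:1}, forcing the Weyl and divisor-type errors to genuinely beat the main term for a full-density set of $N$ given the weak equidistribution available when $\e$ is as small as $1/\log$; and, for \ref{intro::thm:A:3}, carrying out the continued-fraction construction finely enough to defeat this potential cancellation at every scale.
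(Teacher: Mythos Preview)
Your approach for \ref{intro::thm:A:1} via the circle method is different from the paper's but broadly viable; the paper instead argues that if $N\notin 2\cA$ then the orbit $(n^2\a,(N-n)^2\a)$ on $\TT^2$ misses $(-\e,\e)^2$, hence is quantitatively non-equidistributed, and then invokes the Green--Tao quantitative Weyl theorem to produce $k\ll 1/\e^C$ with $\fpa{kN\a}\ll 1/(N\e^C)$. This sidesteps the sign problem with the Beurling--Selberg minorant that you flag, and lands directly on the same exceptional set $\mathcal E$ you describe. The sparseness of $\mathcal E$ is then obtained by a gap argument (two nearby elements of $\mathcal E$ force an unusually good rational approximation of $\a$), not by three-distance. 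Your derivation of \ref{intro::thm:A:4} from \ref{intro::thm:A:1} matches the paper.

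For \ref{intro::thm:A:2} your obstruction is exactly the paper's with $k=2$, but there is a genuine gap in the $\e(n)\to 0$ case. Your contradiction uses $\e(a)+\e(b)\to 0$, which only holds when both $a$ and $b$ are large; for the finitely many small $a\in\cA$ (in particular $a=0$, which always lies in $\cA$) nothing prevents $N=a+(N-a)$ from being a valid representation. The paper flags precisely this pitfall: if one lets the approximation error $\gamma_j\to 0$ (your ``$\fpa{2\a N}N$ small''), the argument collapses at $a=0$. The fix is to arrange $\gamma_j\to\gamma$ with $0<|\gamma|<1$ \emph{and} $\gamma+2n^2\a\notin\ZZ$ for all $n$; the paper does this by seeking long blocks of $1$'s in the continued fraction of $2\a$ (so that $\gamma=\pm 1/\sqrt5$) and then discarding the null set $\QQ[\sqrt5]$. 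Your ``$a_{j+1}$ large'' forces $\gamma=0$ and so cannot work as stated.

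For \ref{intro::thm:A:3} your plan---steer the continued fraction of $\a$ so that the circle-method main term stays positive at every exceptional $N$---is too vague to be a proof; you have not explained how a choice of the next partial quotient controls the relevant phases at all nearby members of $\mathcal E$, nor why the choices at different stages are compatible. The paper's construction is quite different and concrete: it builds $\a$ so that $\nu_2(q_i)\to\infty$ along even $i$ and $\nu_p(q_i)\to\infty$ along odd $i$ for every odd prime $p$. It then proves a sharp converse to the obstruction lemma---if $N\notin 2\cA$ and $N$ is large, then $N$ is odd and $N\a=m/k+\gamma/(kN)$ with $k$ even, $\gcd(m,k)=1$, and $(1-|\gamma|)/(2k)>\e_1$---and shows by an elementary continued-fraction/divisibility argument that no such $N$ can exist for these $\a$. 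The key point you are missing is this exact converse, which reduces ``$2\cA$ cofinite'' to a purely arithmetic condition on the convergents.
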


Above, the phrase ``provided that $\e(n)$ decays slowly enough'' may be expanded into ``there exists $\e_0(n) \to 0$ such that if $\e(n) \geq \e_0(n)$ for all $n$, then the statement holds'', and ``almost all'' means ``all except for a set of Lebesgue measure $0$''. We state the results in a more rigorous manner when we approach the proof.

We first address item \ref{intro::thm:A:2}, which was the original motivation for this research project. Because of \ref{intro::thm:A:3}, we cannot hope to obtain a result for all $\a$, but we are able to cover a number of interesting cases, including Lebesgue-almost all reals, as well as all quadratic surds. This is done in Section \ref{neg::section}. 

Items \ref{intro::thm:A:1} and \ref{intro::thm:A:3} are proved in Section \ref{pos::section}. Our key idea is to translate information about the complement of $2 {} \cA$ into information about good rational appoximations of $\a$. We are then able to use known equidistribution results as a black box, in order to show that if $(2 {} \cA)^c$ had positive (upper asymptotic) density, then $\a$ would have too many good rational approximations. Item \ref{intro::thm:A:3} is proved by an explicit construction using continued fractions. 

Item \ref{intro::thm:A:4} is an immediate consequence of \ref{intro::thm:A:1} (or, strictly speaking, the proof thereof). In fact, our argument implies that $2 \cA + \cB$ contains all sufficiently large integers for any set $\cB$ with at least $2$ elements.

After this paper was completed, the author learnt that in \cite{Deshouillers-1}, Deshouillers, Erd\H{o}s and S\'{a}rk\"{o}zy show that \ref{intro::thm:A:4} holds for $\alpha = \frac{\sqrt{5}+1}{2}$ with $\e(n) \sim n^{-1/12}$ (which is much better than the convergence rate which could be extracted from the argument in this paper); for related results see also \cite{Deshouillers-2}. 

\mbox{}

For polynomials of higher degrees $\deg p \geq 3$, the situation becomes much simpler. The heuristic expectation that $\cA$ should be a \base\ of order $2$ is accurate in this case, as long as we impose the suitable genericity assumptions. 
 Below we give a special case of our main result for polynomials of degree $\geq 3$.

\newtheorem*{theoremB}{Theorem B}\label{intro::thm:B}
\begin{theoremB}
	Let $d \geq 3$. Fix some slowly-decaying function $\e(n)$, $\a \in \RR \setminus \QQ$ and set:
	$$ 
		\cA =  \set{n \in \NN}{ \fpa{ \a n^d } \leq \e(n) }.
	$$ 
	Then the following are true:
	\begin{enumerate}[label=\textbf{B\arabic*.}, ref=\textbf{B\arabic*}]
	\item\label{intro::thm:B:1} For almost all $\a$, $\cA$ is a \base\ of order $2$, provided that $\e(n)$ decays slowly enough.
	\item\label{intro::thm:B:2} Nevertheless, for uncountably many $\a$, $\cA$ is not a \base\ of order $2$, even when $\e(n)$ is constant.
	\end{enumerate}
\end{theoremB}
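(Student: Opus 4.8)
The plan is to establish \ref{intro::thm:B:1} and \ref{intro::thm:B:2} by the two methods announced in the introduction, treating them separately.

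For \ref{intro::thm:B:1}: fix a large integer $N$ and look only at representations $N=a+b$ with $a\in(N/3,2N/3)$; as $\e$ is slowly decaying, $\min(\e(a),\e(N-a))\geq\tfrac12\e(N)=:\delta_N$ for such $a$, so it suffices to find one such $a$ with the point $(\alpha a^d,\alpha(N-a)^d)$ lying in the box $(-\delta_N,\delta_N)^2\subseteq\TT^2$. Suppose none does. Then the polynomial sequence $a\mapsto(\alpha a^d\bmod 1,\ \alpha(N-a)^d\bmod 1)$ of degree $d$ avoids a box of volume $\asymp\delta_N^2$ on an interval of length $\asymp N$, hence is not $\delta_N^{O_d(1)}$-equidistributed in $\TT^2$. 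Invoking the quantitative Weyl theorem for polynomial sequences on tori as a black box, there are $(k_1,k_2)\in\ZZ^2\setminus\{(0,0)\}$ and $q\geq1$, all of size $\leq\delta_N^{-O_d(1)}$, such that $h(a):=k_1\alpha a^d+k_2\alpha(N-a)^d=\sum_{j=0}^{d}\alpha c_j(N)a^j$ satisfies $\fpa{q\alpha c_j(N)}\leq\delta_N^{-O_d(1)}N^{-j}$ for $1\leq j\leq d$, where $c_j(N)\in\ZZ$ is a polynomial in $N$ of degree $d-j$ with coefficients $\leq\delta_N^{-O_d(1)}$; explicitly $c_d=k_1+(-1)^dk_2$ and $c_{d-1}(N)=(-1)^{d-1}dk_2N$. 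Using the relation with $j=d$ when $c_d\neq0$, and with $j=d-1$ otherwise (then $k_2=(-1)^{d+1}k_1\neq0$, so $c_{d-1}\neq0$), one extracts a rational $P_N/Q_N$ whose denominator $Q_N$ is a bounded multiple either of $1$ or of $N$ — the bound being $\leq\delta_N^{-O_d(1)}$ — with $\abs{\alpha-P_N/Q_N}\leq\delta_N^{-O_d(1)}N^{-(d-1)}Q_N^{-1}$. Consequently, if $2\cA$ fails to contain all large integers, then $\alpha$ lies in $B_N$ — the set of reals admitting such a relation — for infinitely many $N$; provided $\e$ decays slowly enough (for instance $\e(n)\geq n^{-c_d}$ with $c_d$ small, which covers $\e(n)=1/\log n$) one has $\abs{B_N}\ll\delta_N^{-O_d(1)}N^{-(d-1)}$, and since $d\geq3$ this is summable in $N$. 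Borel--Cantelli then shows that $\bigcap_M\bigcup_{N\geq M}B_N$ is Lebesgue-null, so for almost every $\alpha$ the set $2\cA$ contains all sufficiently large integers. (The hypothesis $d\geq3$ enters precisely here: for $d=2$ the exponent $d-1$ equals $1$ and the series diverges, matching the fact that the conclusion genuinely fails there.)

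For \ref{intro::thm:B:2}: the construction rests on the elementary identity obtained by expanding $(N-a)^d$ and substituting $\alpha N^{d-k}=(m/2+\eta)N^{d-k-1}$, namely whenever $2\alpha N=m+2\eta$ and $0\leq a\leq N$,
\[
	\alpha(N-a)^d\ =\ (-1)^d\alpha a^d\ +\ \tfrac12\sum_{k=0}^{d-1}\binom{d}{k}(-a)^k\,m\,N^{d-k-1}\ +\ \sum_{k=0}^{d-1}\binom{d}{k}(-a)^k\,\eta\,N^{d-k-1}.
\]
When $m,N$ are odd the middle sum is a half-integer, since $\sum_{k=0}^{d-1}\binom{d}{k}a^k=(1+a)^d-a^d$ is odd for every integer $a$ (consecutive integers have opposite parity). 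Hence, if $N$ is odd and $\abs{2\alpha N-m}<2^{-d}\e_0\,N^{-(d-1)}$ for some odd $m$, then for every decomposition $N=a+b$ the last sum above is $<\e_0/2$ in absolute value, so $\alpha(N-a)^d\equiv(-1)^d\alpha a^d+\tfrac12\pmod 1$ up to an error $<\e_0/2$; thus $\fpa{\alpha a^d}\leq\e_0$ forces $\fpa{\alpha(N-a)^d}>\tfrac12-\tfrac32\e_0>\e_0$ once $\e_0<\tfrac15$, i.e.\ $N\notin 2\cA$. It remains to build uncountably many $\alpha\in\RR\setminus\QQ$ admitting, along a sequence of odd integers $N_j\to\infty$, odd integers $m_j$ with $\abs{2\alpha N_j-m_j}<2^{-d}\e_0\,N_j^{-(d-1)}$: this is a routine nested-intervals construction, using that the fractions $m/(2N)$ with $m,N$ odd are dense in $\RR$ and that at each stage one may branch between two well-separated admissible choices of $(N_{j+1},m_{j+1})$, the resulting $\alpha$ being irrational because $N_j\to\infty$.

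The main obstacle is the Diophantine bookkeeping in \ref{intro::thm:B:1}: the ``diagonal'' frequencies $k_2=(-1)^{d+1}k_1$ cost one power of $N$ in the extracted approximation, and one must verify that the measure bound $\abs{B_N}\ll\delta_N^{-O_d(1)}N^{-(d-1)}$ — and hence the summability on which the whole argument rests — survives this loss, especially in the borderline case $d=3$; this works because the relevant denominator is then a bounded multiple of $N$, so only $\ll\delta_N^{-O_d(1)}$ distinct values of $Q_N/N$ occur. One must also fix precisely which quantitative equidistribution statement to invoke and check its hypotheses for the two-dimensional polynomial sequence above. By contrast \ref{intro::thm:B:2} is essentially immediate once the displayed identity and the parity of $(1+a)^d-a^d$ are noticed.
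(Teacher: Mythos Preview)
Your proposal is correct and follows essentially the same route as the paper for both parts: for \ref{intro::thm:B:1} the paper also deduces from quantitative Weyl that each bad $N$ forces a Diophantine relation on $\alpha$ coming from either the degree-$d$ or degree-$(d-1)$ coefficient of $k_1 p(n)+k_2 p(N-n)$, and then sums over $N$ (phrased as a Fubini bound $\EE_{p\in rB}\abs{\cN^{(p)}}<\infty$ rather than Borel--Cantelli, but this is cosmetic); for \ref{intro::thm:B:2} the paper uses the factorisation $n_1^d-(-1)^d n_2^d=N\sum_j(-1)^j n_1^{d-1-j}n_2^j$ in place of your binomial expansion to reach the same half-integer conclusion, and then the identical nested-intervals/Cantor-set construction.
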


	In Section \ref{hd::section} we will establish a more general result in which the polynomial $p$ varies in a linear family. The bulk of the difficulty lies in proving \ref{intro::thm:B:1}. We rely on similar ideas as for \ref{intro::thm:A:1}, and relate each element in the complement of $2 \cA$ to the lack of equidistribution of a certain polynomial sequence. Using known result about distribution of polynomial sequences, we then connect lack of equidistribution with a system of approximate rational dependencies, which generically turn out not to be satisfiable.
	
	For \ref{intro::thm:B:2}, it suffices to take $\a$ sufficiently well approximable by rationals, and we can construct such $\a$ explicitly.

\subsection*{Acknowledgements} The author wishes to thank Ben Green for introducing him to the problem and for much helpful advice and corrections to the manuscript. The author is also indebted to Bryna Kra for comments on possible further directions and to Jean-Marc Deshouillers for bringing relevant references to his attention. Finally, thanks go to Sean Eberhard, Frederick Manners, Przemys\l{}aw Mazur and Rudi Mrazovi\'c for many informal discussions.


\section{Failure to be a \base\ of order $2$.}\label{neg::section}

Our goal in this section is to prove that the sets
\begin{equation}
	\A{\e}{\a} := \set{n \in \NN}{ \fpa{\a n^2 } < \e(n)}
	\label{neg::eq::def-of-A}
\end{equation}
are ``usually'' not \bases\ of order $2$, even when $\e(n) = \e_0$ is constant.

\begin{theorem*}[\ref{intro::thm:A:2}, reiterated]
There exists a set $Z \subset \RR$ of Lebesgue measure $0$ such that for any $\a \in \RR \setminus Z$ and for any $\e(n) \to 0$, the set $\A{\e}{\a}$ defined in \eqref{neg::eq::def-of-A} is not a \base\ of order $2$.

Moreover, the same statement is true for $\a \in \QQ[\sqrt{d}]\setminus \QQ$, for any $d \in \NN$.
\end{theorem*}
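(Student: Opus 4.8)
The plan is to show that for a suitable $\a$ one can find an explicit sequence $N_i \to \infty$ with $N_i \notin 2 \cA_\e^\a$, by exploiting near-resonances of $\a$. The mechanism is the elementary observation already used for the degree-$1$ case, adapted to quadratics: if $N = m_1 + m_2$ with $m_1, m_2 \in \cA_\e^\a$, then writing $m_2 = N - m_1$ we get
$$
\a m_1^2 + \a (N-m_1)^2 = 2\a m_1^2 - 2 \a N m_1 + \a N^2,
$$
so $\fpa{2 \a N m_1 - 2 \a m_1^2 - \a N^2} \leq \fpa{\a m_1^2} + \fpa{\a m_2^2} \leq \e(m_1) + \e(m_2)$. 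The idea is to choose $N$ so that for every $m_1 \in [0,N]$ the quantity $2\a N m_1 - 2\a m_1^2 - \a N^2$ stays bounded away from $\ZZ$ by a fixed margin, which (since $\e(n) \to 0$) forces $N \notin 2\cA$ once $N$ is large. Completing the square, $2 \a N m_1 - 2 \a m_1^2 - \a N^2 = -2\a(m_1 - N/2)^2 + \a N^2/2$; substituting $j = $ (roughly) $m_1 - N/2$, we are asking that $\fpa{ -2\a j^2 + \a N^2/2 }$ be bounded below uniformly in $j$ ranging over an interval of length $\sim N/2$.

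First I would reduce everything to a statement about the fractional parts of $\{2 \a j^2 \bmod 1 : j \leq N/2\}$ together with the single ``shift'' $\a N^2 / 2$. The point is that if $2\a$ is extremely well approximable — say $\fpa{2\a - a/q}$ is tiny for a denominator $q$, so that $2\a j^2$ stays within a small window of $\frac{a}{q} j^2 \bmod 1$ for all $j$ up to some range $\gg q$ — then the set $\{2\a j^2 \bmod 1\}$ is concentrated near the finite set $\{ \tfrac{a}{q} j^2 \bmod 1 : j \in \ZZ/q\ZZ\}$ of rational points with denominator dividing $q$. This finite set of rationals omits an interval of length $\geq 1/q$; by choosing the shift $\a N^2/2$ to land in (the middle of) such an omitted interval and taking $N \asymp q$, one arranges the uniform lower bound $\fpa{-2\a j^2 + \a N^2/2} \geq c/q$ for all relevant $j$. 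Since $\e(n) \to 0$, for $N$ (hence $q$) large enough we have $\e(m_1) + \e(m_2) < c/q$, giving $N \notin 2\cA$.

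The implementation then splits into the two advertised families. For the measure-zero statement, I would invoke a standard metric result: the set $Z$ of $\a$ for which $2\a$ (equivalently $\a$) admits infinitely many rational approximations $a/q$ with $\fpa{\a - a/q} \leq q^{-K}$ — for $K$ large enough relative to the decay of $\e$ — has measure zero by Borel–Cantelli, so its complement $\RR \setminus Z$ is the wrong target; rather, I want the bad set to have FULL measure. So in fact the argument must run the other way: I claim the good approximations exist for \emph{almost every} $\a$ in the sense needed — this is because we only need $\fpa{\a - a/q} \leq \e(q)^2$ or so along \emph{some} sequence $q_i \to \infty$, and Dirichlet's theorem already gives $\fpa{\a - a/q} \leq 1/q^2$ infinitely often for every irrational $\a$, which is more than enough when $\e$ decays slowly. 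Thus $Z$ can be taken to be $\QQ$ together with the (measure zero, by Dirichlet being essentially optimal only on a null set... ) — here I need to be careful: the uniform lower bound over $j \in [1, N/2]$ requires control of $2\a j^2$ for $j$ up to $\sim q$, which needs $\fpa{2\a - a/q} \ll 1/q^3$, and this holds infinitely often only for a measure-zero set of $\a$. So the honest plan is: (i) the measure-zero exceptional set $Z$ consists of $\a$ that are \emph{badly} approximable in the relevant sense, and for $\a \notin Z$ we produce $q_i$ with $\fpa{2\a - a_i/q_i} \leq q_i^{-3}$, run the above; (ii) show this set of well-approximable $\a$ has full measure by a convergence/divergence Borel–Cantelli computation with the slowly-decaying $\e$ built in.

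For the quadratic-surd case $\a \in \QQ[\sqrt d] \setminus \QQ$, the continued fraction expansion of $\a$ is eventually periodic, so the convergents $a_i/q_i$ satisfy $\fpa{\a - a_i/q_i} \asymp q_i^{-2}$ with $q_{i+1} \asymp q_i$ (bounded partial quotients), and moreover the Pell-type structure lets one choose the sequence $N_i$ explicitly — this is exactly the $N_i = \frac{(3+2\sqrt2)^{2i+1} - (3-2\sqrt2)^{2i+1}}{2\sqrt2}$ phenomenon flagged in the introduction for $\sqrt 2$. Here $q_i^{-2}$ is not quite $q_i^{-3}$, but because $\e(n) \to 0$ we are allowed to take $N_i$ much smaller than $q_i$ — say $N_i \asymp q_i / \omega(q_i)$ for a slowly growing $\omega$ — shrinking the range of $j$ so that $2\a j^2$ still tracks its rational model to within $o(1/q_i)$, which suffices. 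I would handle the general surd by reducing to the convergents of $2\a$ and checking that the omitted-interval-plus-shift construction goes through with the period of the continued fraction absorbed into constants.

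The main obstacle, I expect, is \textbf{not} the resonance mechanism itself but the bookkeeping that matches the three quantities against each other: the approximation quality $\fpa{2\a - a/q}$, the length $N/2$ of the range of $j$, and the decay rate $\e(n)$ for $n \leq N$. One needs $N$ large enough that $\e(N) \cdot q^{o(1)}$ beats the gap $c/q$, yet $N$ small enough (relative to $q$) that $2\a j^2$ has not drifted off its rational model — and simultaneously the shift $\a N^2/2$ must be placed inside an omitted interval, which constrains $N \bmod q$ and interacts with the first two requirements. Getting a single choice of $N = N_i$ (as a function of $q_i$ and the implicit $\e_0(n)$) that satisfies all of these is the delicate point; the rest is Dirichlet/Borel–Cantelli for the metric claim and continued fractions for the surd claim.
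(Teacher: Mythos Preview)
Your approach has a genuine gap rooted in the choice of the \emph{sum} $\a m_1^2 + \a m_2^2$ as the diagnostic quantity. The paper instead uses the \emph{difference}
\[
\a n_1^2 - \a n_2^2 \;=\; (n_1 - n_2)(n_1+n_2)\,\a \;=\; (n_1 - n_2)\, N\a,
\]
which is \emph{linear} in the free variable $n_1 - n_2$. This linearisation is the key idea you are missing: one then needs only a single Diophantine condition on $N\a$, namely that $N\a$ be close to a rational $m/k$ with $m$ odd and $k$ even. Since $N$ odd forces $n_1 - n_2$ odd, the quantity $(n_1 - n_2)m/k$ is never an integer and stays at distance $\geq 1/k$ from $\ZZ$, while the error term $(n_1-n_2)\gamma/(kN)$ is uniformly $<|\gamma|/k$ because $|n_1-n_2|\leq N$. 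This gives $\fpa{\a n_1^2 - \a n_2^2}$ bounded below by a constant depending only on $k$, with no need to control a quadratic sequence in $j$.

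By contrast, your completed-square expression forces you to control $2\a j^2$ over a range of $j$ of length $\sim N$, and as your own analysis shows this requires approximations $\fpa{2\a - a/q} \ll q^{-3}$. You correctly note that such approximations exist infinitely often only for a measure-zero set of $\a$ --- and then try to argue this same set has \emph{full} measure, which is a contradiction you do not resolve. No Borel--Cantelli adjustment with the slowly decaying $\e$ will fix this: the $q^{-3}$ requirement comes from the quadratic drift $\theta j^2$ over $j\in[1,N]$ with $N\asymp q$, and is independent of $\e$. For quadratic surds the bounded partial quotients make the situation worse, not better, since then $\fpa{\a - p_i/q_i}\asymp q_i^{-2}$ exactly; your proposed fix $N_i \asymp q_i/\omega(q_i)$ does not give drift $o(1/N_i)$ unless $\omega(q_i)\gg q_i^{1/2}$, at which point placing the shift $\a N_i^2/2$ into a specific gap of width $\sim 1/N_i$ becomes an additional unaddressed constraint.

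For comparison: the paper's argument for almost every $\a$ uses only convergent-quality approximation $\fpa{2\a - p_i/q_i} < 1/q_i^2$, available for every irrational. The null exceptional set enters through a different door. One needs infinitely many convergents $p_i/q_i$ of $2\a$ with $p_i,q_i$ both odd and with the normalised error $\gamma_i$ converging to some $\gamma$ satisfying $\gamma + 2n^2\a \notin \ZZ$ for every integer $n$ (this irrationality condition, the paper's Lemma~\ref{neg::prop:obst-var}, is needed because when $\e(n)\to 0$ one must rule out representations $N_i = n_1 + (N_i - n_1)$ with a \emph{fixed} small $n_1\in\A{\e}{\a}$). The paper arranges this via ergodicity of the Gauss map: for almost every $\a$ the continued fraction of $2\a$ contains arbitrarily long blocks of $1$'s, along which one can select indices with both parities odd and force $\gamma_i \to \pm 1/\sqrt{5}$; the irrationality condition then fails only if $\a\in\QQ[\sqrt{5}]$, a null set. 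The quadratic-surd case is handled directly via Pell-type recurrences, again through the difference identity rather than the sum.
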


This result is somewhat surprising, because a random (unstructured) set of similar size should be a \bot. In fact, if $\cA \subset \NN$ is constructed randomly with $\PP(n \in \cA) = \e$, independently for each $n$, then with high probability $2 {} \cA$ contains all integers larger than roughly $\frac{1}{\e^2} \log \frac 1\e$. 

\mbox{}\\

We will prove a variety of partial results, with different restrictions on $\a$ and $\e(n)$, not all of which are included in Theorem \ref{intro::thm:A:2} as stated above. For $\a$, we separately address the ``structured'' case when $\a$ is a quadratic surd or, more generally, is badly approximable, and the ``generic'' case when $\a$ is selected from a suitable set of full measure. For $\e(n)$, we either assume that $\e(n) \to 0$ as $n \to \infty$ or that $\e(n) \leq \e_0(\a)$ is bounded by a constant which is allowed to depend on $\a$. 

\subsection{General strategy}

	We begin by introducing a somewhat technical tool which will allow us to detect large integers $N$ in the complement of $2\A{\e}{\a}$. Importantly, we are able to reduce the task of proving that $N \not \in 2\A{\e}{\a}$ to the task of verifying a simple Diophantine inequality.

	The basic idea is quite simple. 
	Suppose that we allowed $\a$ to take rational values, and take for instance $\a = \frac{1}{2}$. Assuming that $\e(n) < \frac{1}{2}$ for all $n$, the set $\A{\e}{\a}$ is far from being a \bot. Indeed, we then have $\A{\e}{\a} = 2 \NN$, which is not a \base\ of any order.
	
	The following lemma makes this observation quantitative. We will use it multiple times.

\begin{lemma}\label{neg::prop:obst-eps-adv}
	Suppose that for an odd integer $N$, there are integers $k,m$, with $k$ even and $m$ odd, and a real parameter $\delta > 0$, such that we have 
	\begin{equation}
	 \fpa{N\alpha - \frac{m}{k} } < \frac{1- \delta}{k N}.
	\end{equation}
	Then $N \not \in 2 {} \A{\e}{\a}$ for any pointwise bounded $\e(n) \leq \e_0$, where $\e_0 = \frac{\delta}{2 k}$.
\end{lemma}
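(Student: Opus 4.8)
The plan is to argue by contradiction. Suppose $N \in 2 \ttimes \A{\e}{\a}$, so that $N = n_1 + n_2$ with $\fpa{\a n_i^2} < \e(n_i) \le \e_0$ for $i=1,2$. The identity I would build on is
$n_1^2 - n_2^2 = (n_1+n_2)(n_1-n_2) = N d$, where $d := n_1 - n_2$. Since $N$ is odd, $n_1$ and $n_2$ have opposite parities, so $d$ is odd (in particular $d \neq 0$), and $\abs{d} \le n_1 + n_2 = N$ because $n_1, n_2 \ge 0$. These parity and size facts are the only structural inputs about $n_1, n_2$ that I will need.

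First I would extract an upper bound for $\fpa{\a N d}$. By the triangle inequality for $\fpa{\cdot}$,
$\fpa{\a N d} = \fpa{\a n_1^2 - \a n_2^2} \le \fpa{\a n_1^2} + \fpa{\a n_2^2} < 2 \e_0 = \delta/k$.

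Next I would derive a contradictory lower bound from the Diophantine hypothesis. Write $N\a = m/k + \theta$ with $\abs{\theta} < \frac{1-\delta}{kN}$; then $\a N d = \frac{md}{k} + \theta d$, so $\fpa{\a N d} \ge \fpa{\frac{md}{k}} - \abs{\theta d}$. Here is where the parity assumptions pay off: since $m$ and $d$ are both odd while $k$ is even, $md/k$ is a rational number which is not an integer and whose reduced denominator divides $k$, whence $\fpa{md/k} \ge 1/k$. Combining this with $\abs{\theta d} \le \abs{\theta}\,\abs{d} < \frac{1-\delta}{kN}\cdot N = \frac{1-\delta}{k}$, I get $\fpa{\a N d} > \frac1k - \frac{1-\delta}{k} = \frac{\delta}{k}$, which contradicts the upper bound. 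Hence $N \notin 2 \ttimes \A{\e}{\a}$.

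The only genuinely delicate point is the estimate $\fpa{md/k} \ge 1/k$: this is precisely where ``$k$ even and $m$ odd'' together with ``$N$ odd'' (which forces $d$ odd) conspire to keep $md/k$ away from the integers; if $md$ were allowed to be even the bound would collapse, mirroring the toy case $\a = \tfrac12$. Everything else is the triangle inequality and the trivial estimate $\abs{n_1 - n_2} \le n_1 + n_2$, so I anticipate no real obstacle beyond keeping track of strict versus non-strict inequalities.
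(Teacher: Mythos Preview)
Your argument is correct and essentially identical to the paper's proof: both hinge on the identity $n_1^2-n_2^2=(n_1-n_2)N$, the parity observation that $n_1-n_2$ is odd (forcing $\fpa{m(n_1-n_2)/k}\ge 1/k$), and the triangle inequality to compare against $2\e_0=\delta/k$. The only cosmetic difference is that the paper writes $N\alpha \equiv \tfrac{m}{k}+\tfrac{\gamma}{kN}\pmod 1$ rather than $N\alpha=\tfrac{m}{k}+\theta$; your phrasing should really be modulo $1$ (or absorb an integer into $\theta$), but this does not affect the estimates.
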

\begin{proof}

	Let us take $\gamma$ with $\abs{\gamma} < 1-\delta$ so that $N \alpha \equiv \frac{m}{k} + \frac{\gamma}{kN} \pmod{1}$. Consider any decomposition $N = n_1 + n_2$ with $n_1,n_2 \in \NN$. We can then compute:
	\begin{align*}
	 \fpa{n_1^2 \alpha- n_2^2\alpha} 
	 &= \fpa{(n_1 - n_2) N \alpha}
	\\& = \fpa{ \frac{(n_1-n_2) m}{k} + \frac{n_1-n_2}{N} \frac{\gamma}{k} } 
	 \geq \frac{1}{k}  - \frac{\abs{\gamma}}{k} > \frac{\delta}{k} = 2 \e_0.
	\end{align*}	
	
	It follows that $n_1^2 \alpha$ and $ n_2 ^2 \alpha \bmod{1}$ cannot both lie in $(-\e_0,\e_0) \bmod{1}$. Hence at least one of $n_1,n_2$ fails to belong to $\A{\e}{\a}$ and consequently $N \not \in 2 {} \A{\e}{\a}$. 
\end{proof}

Our next result is in similar spirit, with the difference that instead of pointwise bound $\e(n) \leq \e_0$, we work with the condition $\e(n) \to 0$.

\begin{remark*}\label{neg::remark::e-const-vs-e-var}
	It might seem that a set $\A{\e}{\a}$ with $\e(n) \to 0$ must necessarily be ``smaller'' than one with constant $\e(n) = \e_0$, and hence that Lemma \ref{neg::prop:obst-var} below is strictly weaker than Lemma \ref{neg::prop:obst-eps-adv}. However, we wish to emphasise that for variable $\e(n)$ we allow the value $\e(n)$ to be large when $n$ is small.
	
	Because we expect the complement of $2 {} \A{\e}{\a}$ to have density $0$, we cannot rule out a priori that small values of $n$ play a role. In fact, for any $\e_0 > 0$, one can construct $\e(n) \to \e_0$ such that $\A{\e}{\a}$ is \bot, simply by exploiting the fact that $ \A{\e_0}{\a}$ is syndetic. Hence, it is not the case that small values of $n$ can be altogether ignored.
	
	Here and elsewhere, by a slight abuse of notation, we write $\A{\e_0}{\a}$, allowing the symbol $\e_0$ to also denote the constant function $n \mapsto \e_0$. 
\end{remark*}

\begin{lemma}\label{neg::prop:obst-var}
	Let $\ef(n) \to 0$, and let $\left(N_i\right)_{i = 1}^{\infty}$ be an increasing sequence of odd integers. Suppose that for each $i$, we have 
	$$N_i \alpha = \frac{m_i}{ k} + \frac{\gamma_i}{k N_i},$$ 
	where $m_i,\ k$ are integers, $k$ is even and $m_i$ is odd. Assume further that $\gamma$ with $\abs{\gamma} < 1$ is an accumulation point of $\gamma_i$. Then $N_i \not \in 2 {} \A{\e}{\a}$ for infinitely many $i$, unless $\gamma + kn^2 \alpha \in \ZZ$ for some integer $n$.
\end{lemma}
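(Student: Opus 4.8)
The plan is to mimic the proof of Lemma \ref{neg::prop:obst-eps-adv}, but since now $\e(n)$ only tends to $0$ rather than being uniformly bounded, we cannot obtain a fixed lower bound on $\fpa{n_1^2\a - n_2^2\a}$ that beats $2\e_0$ for \emph{all} decompositions $N_i = n_1+n_2$; instead we only need it for the \emph{one} decomposition (up to swapping $n_1,n_2$) in which both $n_1,n_2$ are comparable in size to $N_i/2$, i.e. both large, so that $\e(n_1),\e(n_2)$ are both small. First I would fix $i$, write $n_1 = n_2 + N_i$ is wrong — rather, for a decomposition $N_i = n_1 + n_2$ set $n := n_1 - n_2$, so $n \equiv N_i \pmod 2$ is odd and $n_1 = (N_i+n)/2$, $n_2 = (N_i-n)/2$. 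Then, exactly as in the earlier lemma,
\begin{align*}
\fpa{n_1^2\a - n_2^2\a} &= \fpa{n N_i \a} = \fpa{\frac{n m_i}{k} + \frac{n\gamma_i}{k N_i}}.
\end{align*}
Since $k$ is even and $m_i, n$ are odd, $n m_i/k$ has denominator exactly $k$ in lowest terms is not quite right either, but $n m_i \not\equiv 0 \pmod k$ because $n m_i$ is odd while $k$ is even; hence $\fpa{n m_i/k} \geq 1/k$.

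The key point is the choice of $n$: by hypothesis $\gamma$ is an accumulation point of $(\gamma_i)$, so along a subsequence $\gamma_i \to \gamma$ with $\abs{\gamma} < 1$. I would like to take $n$ bounded (independent of $i$) so that the error term $\abs{n\gamma_i}/(k N_i) \to 0$; then for $i$ large along the subsequence,
\[
\fpa{n N_i \a} \geq \frac 1k - \frac{\abs{n}\abs{\gamma_i}}{k N_i} \to \frac 1k - 0 = \frac 1k > 0,
\]
so this distance is bounded below by, say, $\tfrac{1}{2k}$ for all large $i$ in the subsequence — provided $n$ is a fixed odd integer. With $n$ fixed and $N_i \to \infty$, both $n_1 = (N_i+n)/2$ and $n_2 = (N_i-n)/2$ tend to infinity, so $\e(n_1), \e(n_2) \to 0$; in particular both are eventually smaller than $\tfrac{1}{4k}$. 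Then $n_1^2\a$ and $n_2^2\a$ cannot both lie within $\e(\cdot)$ of an integer (their difference exceeds $\tfrac{1}{2k} > \e(n_1) + \e(n_2)$), so at least one of $n_1, n_2$ is not in $\A{\e}{\a}$, and $N_i \notin 2\A{\e}{\a}$.

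It remains to handle \emph{which} fixed odd $n$ works and to see where the exceptional clause "$\gamma + kn^2\a \in \ZZ$ for some integer $n$" enters. For a given decomposition to fail we only needed $\fpa{n N_i \a}$ bounded away from $0$; the argument above works for \emph{every} fixed odd $n$, and $n=1$ already suffices — unless something degenerates. Let me recheck: actually with $n$ fixed, $\fpa{n N_i\a} = \fpa{\tfrac{n m_i}{k} + \tfrac{n\gamma_i}{kN_i}} \to \fpa{\tfrac{nm_i}{k}}$? No — $m_i$ varies with $i$, but $\fpa{nm_i/k}$ takes finitely many values all $\geq 1/k$ since $nm_i$ is odd, $k$ even; so the bound $\geq 1/k - o(1)$ stands regardless. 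So where is the exception? The subtlety is that $n_1$ and $n_2$ range over a decomposition of $N_i$, but we must make sure \emph{some} valid decomposition with both parts large exists and with $n_1,n_2 \in \NN$ — that is automatic. The genuine gap: the earlier lemma needed the bound for \emph{all} decompositions; here we only need it for one, so actually we should get a \emph{stronger} conclusion. I suspect the exceptional clause arises because one must also ensure that the element we exclude, say $n_1$, actually could have been in $\A{\e}{\a}$ — i.e. we need $N_i \notin 2\A{\e}{\a}$, which requires \emph{every} decomposition to fail, not just the balanced one. So I would redo the argument uniformly over all decompositions $N_i = n_1 + n_2$: split into the case $\min(n_1,n_2)$ large (balanced-ish), handled by taking $n = n_1 - n_2$ possibly large, where now $\abs{n}/N_i$ need not be small; writing $n/N_i = \theta \in [-1,1]$,
\[
\fpa{n N_i\a} = \fpa{\tfrac{nm_i}{k} + \tfrac{\theta \gamma_i}{k}},
\]
and the obstruction is precisely that $\tfrac{\theta\gamma_i}{k}$ could cancel the $\tfrac 1k$-spacing; passing to the limit this is the condition $\gamma + k n^2\a \in \ZZ$ after clearing — I would track this dependency carefully. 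The case $\min(n_1,n_2)$ bounded is easy: then that small part lies in a fixed finite set, so it fails to be in $\A{\e}{\a}$ for $i$ large unless it is one of finitely many fixed values $n$ with $\fpa{n^2\a}$ small, which is a measure-zero / countable exceptional condition matching the statement. The main obstacle, then, is the bookkeeping in the balanced case: showing that the accumulation-point hypothesis $\abs{\gamma} < 1$ forces $\fpa{n N_i\a}$ away from $0$ along a subsequence \emph{simultaneously for all} $n$ with $\abs{n} \leq N_i$, except when the degenerate algebraic relation $\gamma + kn^2\a \in \ZZ$ holds; I expect this is done by a compactness argument in $\theta$ together with the observation that $nm_i/k \bmod 1 \in \{1/k, 3/k, \dots\}$ (odd multiples), whose minimum distance from $-\theta\gamma_i/k$ degenerates only on the stated set.
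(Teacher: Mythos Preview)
Your split into ``one summand bounded'' versus ``both summands large'' is the right idea, but you have the two cases exactly backwards as to which one produces the exceptional clause and which one is routine.

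The ``both large'' case (your balanced case) needs no compactness argument and no exception. The computation from Lemma~\ref{neg::prop:obst-eps-adv} already gives, for \emph{every} decomposition $N_i = n_1+n_2$ with $n = n_1 - n_2$,
\[
\fpa{n_1^2\a - n_2^2\a} = \fpa{\tfrac{n m_i}{k} + \tfrac{n}{N_i}\tfrac{\gamma_i}{k}} \geq \tfrac{1}{k} - \tfrac{|\gamma_i|}{k},
\]
since $|n| \leq N_i$ and $n m_i$ is odd. Along the subsequence with $\gamma_i \to \gamma$ this is at least $2\e_0$ for some fixed $\e_0 > 0$. So $N_i \notin 2\A{\e_0}{\a}$ for large $i$, full stop; there is no degeneracy in $\theta$ to worry about.

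The exceptional clause comes entirely from the ``one summand bounded'' case, and your analysis of that case is off. The point is not whether the small summand $n_1$ lies in $\A{\e}{\a}$ --- it may well do, since $\e(n_1)$ can be large --- but whether the \emph{large} summand $n_{2,i} = N_i - n_1$ does. The previous paragraph forces any representation $N_i = n_1 + n_2 \in \A{\e}{\a} + \A{\e}{\a}$ to have one summand $n_1$ in the finite set $\A{\e}{\a} \setminus \A{\e_0}{\a}$; pass to a subsequence so that $n_1$ is a single fixed integer. Now compute
\[
n_{2,i}^2 \a \equiv (N_i - 2n_1)\Bigl(\tfrac{m_i}{k} + \tfrac{\gamma_i}{kN_i}\Bigr) + n_1^2\a \pmod 1,
\]
and use $\e(n_{2,i}) \to 0$ together with $\tfrac{N_i-2n_1}{N_i}\to 1$ and $\gamma_i \to \gamma$. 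After passing to a further subsequence on which the residue $(N_i-2n_1)m_i \bmod k$ is a fixed odd $m'$, the condition $\fpa{n_{2,i}^2\a} < \e(n_{2,i}) \to 0$ forces
\[
\tfrac{m'}{k} + \tfrac{\gamma}{k} + n_1^2\a \in \ZZ,
\]
i.e.\ $\gamma + k n_1^2 \a \in \ZZ$. That is precisely the exceptional clause, with $n = n_1$.
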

\begin{proof}
	Passing to a subsequence, we may assume that $\gamma_i \to \gamma$ as $i \to \infty$.
	
	Let $\e_0$ be such that $\frac{1-\abs{\gamma}}{k} > 2\e_0 > 0$. Then from previous Proposition \ref{neg::prop:obst-eps-adv} it follows that $N_i \not \in 2 {}  \A{\e_0}{\a}$ for sufficiently large $i$. Hence, if $N_i \in 2 {} \A{\e}{\a}$ for some $i$, then $N_i$ needs to have a representation as $n_1 + n_2$ with $n_1 \in \A{\e}{\a} \setminus \A{\e_0}{\a},\ n_2 \in \A{\e}{\a}$. Note that the set $\A{\e}{\a} \setminus \A{\e_0}{\a}$ is finite, so passing to a subsequence again we may assume that there exists a single $n_1$ such that for each $i$ we have $n_{2,i} := N_i - n_1 \in \A{\e}{\a}$.
	
	Directly from the membership condition for $\A{\e}{\a}$, we now find
	$$ \e(n_{2,i}) > \fpa{ (N_i-2n_1) \left( \frac{m_i}{k} + \frac{\gamma_i}{kN_i} \right)+ n_1^2\a}. $$
	We have $\e(n_{2,i}) \to 0$  and $\frac{N_i-2n_1}{N_i} \frac{\gamma_i}{k} \to \frac{\gamma}{k}$ as $i \to \infty$. It follows that
		$$ \fpa{ \frac{m_i'}{k} + \frac{\gamma}{k} + n_1^2\a} \to 0, $$
where $m_i' := (N-2n_1)m_i \mod k$. Note that $m_i'$ is odd and takes only finitely many values. Restricting to a subsequence, we may assume that $m_i' = m'$ is constant. Now, the expression in the limit above is independent of $i$, and hence
	$$\fpa{ \frac{m'}{k} + \frac{\gamma}{k} + n_1^2\a} = 0.$$
In particular, we have
$$ k\left( \frac{\gamma}{k} + n_1^2\a \right) \in \ZZ,$$
contradicting the irrationality assumption.
\end{proof}

\subsection{Quadratic irrationals}
\label{neg::sq(d)::section}

We will now prove Theorem \ref{intro::thm:A:2} in the special case when $\a = \sqrt{2}$. The argument generalises to $\a \in \QQ[\sqrt{d}] \setminus \QQ$ without any new ideas. This case is already representative for some of our methods. Our immediate goal is the following result.

\begin{proposition} \label{neg::sqrt::prop:main}
	Let $\e_1 := \frac{1}{4}(1 - \frac{1}{4 \sqrt{2}})$. Suppose that either we have the pointwise bound $\e(n)  \leq \e_0 < \e_1$ or that $\e(n) \to 0$. Then $\A{\e}{\sqrt{2}}$ is not a \bot. In the case when $\e(n)$ is pointwise bounded, we additionally have the quantitative bound
	$$\abs{[T] \setminus 2 {} \A{\e}{\sqrt{2}} } \gg \log T,$$
where the implicit constant depends at most on $\e_0,\e_1$.
\end{proposition}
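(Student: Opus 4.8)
The plan is to exhibit an explicit, exponentially growing sequence of odd integers $N_j$, together with a fixed even modulus $k$, to which Lemma~\ref{neg::prop:obst-eps-adv} applies in the pointwise‑bounded regime and Lemma~\ref{neg::prop:obst-var} applies in the regime $\e(n)\to 0$. Good rational approximations of $\sqrt2$ are governed by the Pell recursion, so I would introduce the (half‑companion) Pell numbers $P_n,Q_n$ via $Q_n+P_n\sqrt2=(1+\sqrt2)^n$; conjugating, $Q_n-P_n\sqrt2=(1-\sqrt2)^n$. Running the recursions $P_{n+2}=2P_{n+1}+P_n$ and $Q_{n+2}=2Q_{n+1}+Q_n$ modulo $4$ shows that $Q_n$ is always odd, that $P_n$ is even exactly when $n$ is even, and that $P_{4j+2}\equiv 2\pmod 4$. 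I therefore set
\[
	N_j:=\tfrac12 P_{4j+2}\ \ (j\ge 0),\qquad k:=2,\qquad m_j:=Q_{4j+2},
\]
so that $(N_j)$ is a strictly increasing sequence of \emph{odd} integers, $k$ is even, and each $m_j$ is odd.

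Next I would extract the Diophantine estimate. Since $4j+2$ is even, $(1-\sqrt2)^{4j+2}=(3-2\sqrt2)^{2j+1}$, and halving the identity $P_{4j+2}\sqrt2=Q_{4j+2}-(1-\sqrt2)^{4j+2}$ gives
\[
	N_j\sqrt2=\frac{m_j}{k}-\frac12(3-2\sqrt2)^{2j+1},
\]
so $\fpa{N_j\sqrt2-m_j/k}=\tfrac12(3-2\sqrt2)^{2j+1}$. Using $P_{4j+2}=\frac{(3+2\sqrt2)^{2j+1}-(3-2\sqrt2)^{2j+1}}{2\sqrt2}$ and $(3-2\sqrt2)(3+2\sqrt2)=1$, one computes
\[
	\fpa{N_j\sqrt2-\frac{m_j}{k}}\cdot kN_j=\frac12(3-2\sqrt2)^{2j+1}P_{4j+2}=\frac{1-(3-2\sqrt2)^{2(2j+1)}}{4\sqrt2}<\frac1{4\sqrt2}=1-4\e_1
\]
for every $j\ge 0$; that is, $\fpa{N_j\sqrt2-m_j/k}<\frac{1-4\e_1}{kN_j}$.

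With this in hand both cases are short. If $\e(n)\le\e_0<\e_1$, put $\delta:=4\e_0$, so that $0<\delta<4\e_1$ and $\e_0=\delta/(2k)$; the displayed bound is precisely the hypothesis of Lemma~\ref{neg::prop:obst-eps-adv}, so $N_j\notin 2\ttimes\A{\e}{\sqrt2}$ for every $j$. Since $N_j\asymp(3+2\sqrt2)^{2j+1}$, we get $\#\{j:N_j\le T\}=\frac{\log T}{2\log(3+2\sqrt2)}+O(1)\gg\log T$, which yields $\abs{[T]\setminus 2\ttimes\A{\e}{\sqrt2}}\gg\log T$. If instead $\e(n)\to 0$, then writing $N_j\sqrt2=\frac{m_j}{k}+\frac{\gamma_j}{kN_j}$ forces $\gamma_j=-\tfrac12(3-2\sqrt2)^{2j+1}kN_j\to\gamma:=-\tfrac1{4\sqrt2}$, with $\abs\gamma<1$; the exceptional condition $\gamma+kn^2\sqrt2\in\ZZ$ of Lemma~\ref{neg::prop:obst-var} reads $\tfrac{(16n^2-1)\sqrt2}{8}\in\ZZ$, which is impossible for an integer $n$. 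Hence $N_j\notin 2\ttimes\A{\e}{\sqrt2}$ for infinitely many $j$, and $\A{\e}{\sqrt2}$ is not a \bot.

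The only genuine obstacle is arranging the three parity requirements of the lemmas ($N$ odd, $k$ even, $m$ odd) to coexist with an approximation of quality $O(1/N^2)$: a raw Pell convergent $Q_n/P_n$ has $Q_n$ odd but the wrong parity pattern in $P_n$, and the remedy is exactly to restrict to indices $n=4j+2$ and then halve the denominator. The remaining verification is the numeric check that the resulting constant $\tfrac1{4\sqrt2}$ is $<1$ (equivalently $\e_1>0$), since this is what lets both lemmas fire; the margin is genuinely small, which is why $\e_1=\tfrac14(1-\tfrac1{4\sqrt2})$ cannot be enlarged. The same scheme works verbatim for any $\a\in\QQ[\sqrt d]\setminus\QQ$, with the Pell recursion replaced by iteration of the relevant fundamental unit, the only change being the modular bookkeeping needed to pin down the parities.
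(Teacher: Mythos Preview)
Your proof is correct and follows essentially the same route as the paper. The paper works directly with powers of $\phi=3+2\sqrt2$ and sets $N_i=b_i/2$ for odd $i$, while you work with $(1+\sqrt2)^n$ and set $N_j=\tfrac12 P_{4j+2}$; since $(1+\sqrt2)^2=3+2\sqrt2$, your $N_j$ is exactly the paper's $N_{2j+1}$, and the key constant $\gamma=-\tfrac1{4\sqrt2}$, the application of Lemmas~\ref{neg::prop:obst-eps-adv} and~\ref{neg::prop:obst-var}, and the $\log T$ count all coincide.
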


\begin{proof}

Any positive integer solution $(x,y)$ to the Pell equation
\begin{equation}
 X^2 - 2 Y^2 = 1, 
 \label{neg::sq(d)::eq:Pell}
\end{equation}
gives rise to the rational approximation $\frac{x}{y}$ of $\a$ with $\frac{x}{y} - \sqrt{2} = \frac{1}{y(x+\sqrt{2}y)}$. 

The fundamental solution to \eqref{neg::sq(d)::eq:Pell} is $(x,y)=(3,2)$. If we let $\phi := 3 + 2\sqrt{2} $ and $\hat\phi := 3 - 2\sqrt{2}$, and define integer sequences $a_i,$ $b_i$ by $\phi^i = a_i + b_i \sqrt{2}$, then all solutions to \eqref{neg::sq(d)::eq:Pell} are of the form $(x,y) = (a_i,b_i)$ We note that $a_i$, $b_i$ have explicit formulas:
\begin{equation}
a_i = \frac{\phi^{i} + \hat\phi^{i}}{2},\quad b_i = \frac{\phi^{i} - \hat\phi^{i}}{2 \sqrt{2} },
\end{equation}
as well as recursive relations:
	\begin{align*}
		a_{i+2} &= 6 a_{i+1} - a_i, &\quad a_0=1,\ a_1 = 3, \\
		b_{i+2} &= 6 b_{i+1} - b_i, &\quad b_0=0,\ b_1 = 2 .
	\end{align*}
	
It will be convenient to take $N_i = b_i/2$. For any $i$, $N_i$ is an integer, and if $i$ is odd, then $N_i$ is odd. We may write	
\begin{align}
	N_i \sqrt{2} = \frac{a_i}{2} + \frac{\gamma_i}{2N_i},  \qquad \text{where }
\gamma_i &=  \hat\phi^i N_i = \frac{-1}{4\sqrt{2}} + O\left(\frac{1}{N_i^2}\right).
\end{align}

To prove the statement in the case when $\e(n) \leq \e_0 < \e_1$ is pointwise bounded, we apply Lemma \ref{neg::prop:obst-eps-adv} to $N_i$, assuming that $i$ is large enough and odd. It follows that $N_i \not \in 2 {} \A{\e}{\sqrt{2}}$, and hence $\A{\e}{\sqrt{2}}$ is not a \bot. The quantitative estimate follows from the fact that $N_i = \Theta(\phi^i)$, and for any $T$ there are $\Theta(\log T)$ values of $i$ with $N_i < T$.
	
To prove the statement in the case when $\e(n) \to 0$, we similarly apply Lemma \ref{neg::prop:obst-var} to the sequence $N_i$ restricted to odd $i$, with $\gamma = \frac{-1}{4\sqrt{2}}$. The claim follows, unless there exists $n$ such that $\gamma + 2 n^2\sqrt{2} \in \ZZ$. Since $\sqrt{2}$ is irrational, that would imply that $2 n^2 = \frac{1}{8} $, which is absurd. 

\end{proof}

The result for general quadratic irrational $\a \in \QQ[\sqrt{d}]$ can be obtained with essentially the same argument. 

\begin{proposition} \label{neg::sqrt::prop:main-2}
	For any $\a \in \QQ[\sqrt{d}]\setminus \QQ$ there exists $\e_1 = \e_1(\a)$ such that the following is true. Suppose that either $\e(n) \leq \e_0 < \e_1$ or that $\e(n) \to 0$. Then $\A{\e}{\a}$ is not a \bot.
\end{proposition}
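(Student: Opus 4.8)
The plan is to reproduce the proof of Proposition~\ref{neg::sqrt::prop:main} essentially verbatim, using the Pell/unit data of $\QQ[\sqrt d]$ in place of that of $\QQ[\sqrt 2]$ and then invoking Lemmas~\ref{neg::prop:obst-eps-adv} and~\ref{neg::prop:obst-var}. There is one genuinely new idea, a normalisation of $\a$ without which the naive generalisation fails, and one fiddly point, a parity count.

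\textbf{Reduction.} First I would write $\a=\frac{u+q\sqrt d}{w}$ with $u,q,w\in\ZZ$, $q\neq0$, $w\geq1$, and, using $\fpa{\a n^2}=\fpa{-\a n^2}$ (so $\A{\e}{\a}=\A{\e}{-\a}$), assume $q>0$ and set $D:=q^2d$. Then $\a=\frac{u+\sqrt D}{w}$ with $D$ a non-square integer, $D\geq2$, and --- crucially --- with coefficient $1$ in front of the surd. This is the point of the whole proof: the rational approximations of $N\a$ extracted from the unit equation will have denominator dividing (a power of $2$ times) $w$, whereas working with $\sqrt d$ itself would leave an extra factor $q$ in the denominator, inflate the approximation error by the factor $q$, and thereby wreck the inequality $\fpa{N\a-m/k}<\frac{1-\delta}{kN}$ as soon as $q\geq 2$. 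After the normalisation the error is $(1+o(1))/(2^{c+1}\sqrt D\,N)$ and the required bound $\frac1{2^{c+1}\sqrt D}<1$ holds unconditionally.

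\textbf{The construction.} Let $\eta=e+f\sqrt D>1$ be the fundamental unit of $\ZZ[\sqrt D]$ and $\eta^i=e_i+f_i\sqrt D$; then $\hat\eta^i:=e_i-f_i\sqrt D$ satisfies $\abs{\hat\eta}=\eta^{-1}<1$, $\eta^i\hat\eta^i=\mathrm N(\eta)^i$, $\gcd(e_i,f_i)=1$, $f_i\sqrt D=e_i-\hat\eta^i$ and $f_i=\frac{\eta^i-\hat\eta^i}{2\sqrt D}$. Mirroring the choice $N_i=b_i/2$, $m_i=a_i$, $k=2$ of Proposition~\ref{neg::sqrt::prop:main}, I would take, for $i$ running through an arithmetic progression $\mathcal I$ along which $c:=v_2(f_i)$ is constant,
\[
	N_i:=f_i/2^c,\qquad k:=2^cw,\qquad m_i:=uf_i+e_i,
\]
so that $N_i\a=\frac{uf_i+e_i}{2^cw}-\frac{\hat\eta^i}{2^cw}=\frac{m_i}{k}+\frac{\gamma_i}{kN_i}$ with $\gamma_i=-\hat\eta^iN_i=-\frac{\hat\eta^if_i}{2^c}=-\frac{1-\hat\eta^{2i}}{2^{c+1}\sqrt D}\to\gamma:=-\frac1{2^{c+1}\sqrt D}$ (using $\eta^i\hat\eta^i=1$ along $\mathcal I$), whence $0<\abs\gamma\leq\frac1{4\sqrt2}<1$. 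Then I would set $\delta:=\frac12(1-\abs\gamma)$, note $\fpa{N_i\a-m_i/k}=\abs{\gamma_i}/(kN_i)<(1-\delta)/(kN_i)$ for all large $i\in\mathcal I$, and finish: Lemma~\ref{neg::prop:obst-eps-adv} yields $N_i\notin 2\ttimes\A{\e}{\a}$ provided $\e(n)\leq\e_0<\e_1(\a):=\frac{\delta}{2k}$, while Lemma~\ref{neg::prop:obst-var} yields the same for $\e(n)\to0$, the exceptional condition there being ruled out since $\gamma+kn^2\a=2^cun^2+\frac{2^{2c+1}Dn^2-1}{2^{c+1}\sqrt D}$ is irrational (its second summand is a nonzero rational multiple of $1/\sqrt D$, because $2^{2c+1}Dn^2-1$ never vanishes for $n\in\ZZ$). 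Either way $\A{\e}{\a}$ is not a \bot, and --- as in Proposition~\ref{neg::sqrt::prop:main}, since $N_i$ grows geometrically --- one even gets $\abs{[T]\setminus 2\ttimes\A{\e}{\a}}\gg_\a\log T$ in the pointwise-bounded case.

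\textbf{The only real work: parities.} The hard part is purely bookkeeping: I must land $N_i$ odd, $k$ even and $m_i$ odd simultaneously. The structural fact that makes this possible is that $\eta^2=(e^2+Df^2)+2ef\sqrt D$ has even surd-coefficient, so $f_i$ is even for every even $i$; consequently $c:=\min_{j\geq1}v_2(f_{2j})$ is a finite integer $\geq1$ (finite since $v_2(f_2)<\infty$), and since $\mathrm N(\eta^2)=1$ the recurrence $f_{2j+2}=2e_2f_{2j}-f_{2j-2}$ is invertible modulo $2^{c+1}$, hence $(f_{2j}\bmod 2^{c+1})_j$ is purely periodic. Fixing $j_0$ with $v_2(f_{2j_0})=c$ and a period $P$, and putting $\mathcal I:=\{\,2j:\ j\equiv j_0\pmod P\,\}$, I get $v_2(f_i)=c$ on $\mathcal I$; since $f_i$ is then even and $\gcd(e_i,f_i)=1$, $e_i$ is odd, so $N_i=f_i/2^c$ is odd, $k=2^cw$ is even (as $c\geq1$), $m_i=uf_i+e_i$ is odd, and $\mathrm N(\eta)^i=1$ (as $i$ is even) --- exactly what the construction above needs. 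I do not expect any obstacle deeper than this.
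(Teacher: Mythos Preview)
Your argument is correct and follows the same overall strategy as the paper: manufacture a sequence $N_i$ from the units of the quadratic order, arrange the parities so that $N_i$ is odd, $k$ even, $m_i$ odd, and then invoke Lemmas~\ref{neg::prop:obst-eps-adv} and~\ref{neg::prop:obst-var}. The one point where the two proofs genuinely diverge is the mechanism used to force $\abs{\gamma}<1$. The paper keeps $\alpha=(a+b\sqrt d)/c$ and replaces the unit $\phi$ by a high $2$-power $\phi^{2^n}$, boosting $\mu=\nu_2(y)$ until $2^\mu>bc$, which drives $\abs{\gamma_i}\approx bc/(2^{\mu+\nu_2(bc)+1}\sqrt d)$ below $\tfrac12$; its $k$ is then a pure power of $2$. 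You instead absorb the rational coefficient of the surd into the radicand, writing $\alpha=(u+\sqrt D)/w$ with $D=q^2d$, so that $\abs{\gamma}=1/(2^{c+1}\sqrt D)\leq 1/(4\sqrt2)$ holds automatically; your $k=2^c w$ need not be a $2$-power, but the lemmas only require $k$ even. Your normalisation is the tidier of the two devices and sidesteps the boosting step entirely, at the cost of working in the (possibly non-maximal) order $\ZZ[\sqrt D]$ rather than $\ZZ[\sqrt d]$; the paper's version has the minor advantage that the irrationality check at the end of Lemma~\ref{neg::prop:obst-var} becomes a pure $2$-adic valuation count.
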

\begin{proof}
We may write $\alpha = \frac{a+b\sqrt{d}}{c}$, where $a,b,c$ are integers. Let $\phi = x + y \sqrt{d} \in \ZZ[\sqrt{d}]$ be a unit, and let $\mu := \nu_2(y)$, the largest power of $2$ dividing $y$. Replacing $\phi$ by $\phi^{2^n}$ for large $n$, we may assume that $\mu$ is sufficiently large, or more concretely that $\mu > \nu_2(b)$ and $2^\mu > bc$. 

Like before, we consider the integer valued sequences: 
\begin{equation}
a_i = \frac{\alpha \phi^i - \hat \alpha \hat \phi^i}{2 \sqrt{d}}c,\quad
b_i = \frac{\phi^i- \hat \phi^i}{2 \sqrt{d}}c.
\end{equation}

We have, using $\mu>\nu_2(b)$ and $\nu_2(x) = 0$, the relations:
\begin{equation}
	\nu_2(a_1) = \nu_2(ay + bx) = \nu_2(b),\quad \nu_2(b_1) = \nu_2(cy) = \nu_2(c) + \mu.
\end{equation}

Because the sequences $a_i$ and $b_i$ are periodic modulo any power of $2$, there exists some $L$ such that for all $i \equiv 1 \pmod{L}$ we have $\nu_2(a_i) = \nu_2(a_1)$ and $\nu_2(b_i) = \nu_2(b_1)$. 
For any such $i$ we define
\begin{equation}
	 N_i := \frac{b_i}{2^{\nu_2(c) + \mu}},\quad m_i := \frac{a_i}{2^{\nu_2(b)}},\quad k:= 2^{\nu_2(c) - \nu_2(b) + \mu}.
\end{equation}  	 
It is straightforward, if mundane, to check that these quantities are integers, and that we have the relation
	$$
		N_i\alpha = \frac{m_i}{k} + \frac{\gamma_i}{k N_i },
	$$
	where $\gcd(m_i,k) = 1$ and $\gamma_i$ are given by
	$$ \gamma_i = \frac{b\hat\phi^i}{2^{\mu+\nu_2(c)}} =
	\frac{\pm bc}{2^{\mu+\nu_2(bc)+1}\sqrt{d}} + o(1).
	$$
	Here, $o(1)$ denotes an error term which goes to $0$ as $i \to \infty$. The choice of $\mu$ guarantees that $\abs{\gamma_i} < \frac{1}{2}$ for large $i$. In the case $\e(n) \leq \e_0$, it follows from Lemma \ref{neg::prop:obst-eps-adv} that $N_i \not \in 2 {} \A{\e}{\a}$, provided that we take $\e_1 \leq \frac{1}{4 k}$.
	
	To deal with the case $\e(n) \to 0$, we notice that $\gamma_i \to \gamma := \frac{\pm bc \sqrt{d} }{2^{\mu+\nu_2(bc)+1} d}$. By Lemma \ref{neg::prop:obst-var}, we have $N_i \not \in 2 {} \A{\e}{\a}$ for sufficiently large $i$, unless $\gamma + k n^2 \a \in \ZZ$ for some $n$.  If it was the case that $\gamma + k n^2 \a \in \ZZ$ for some $n$, then it would follow that $ k2^{\mu+\nu_2(bc)+1} d \mid bc^2 $. However, this is impossible, since 
	$$\nu_2( k2^{\mu+\nu_2(bc)+1} d) \geq \mu + \nu_2(bc) + 1 > \nu_2(bc^2). \qedhere$$
	
\end{proof}

\subsection{Badly approximable reals}\label{neg::cf::section}

We now turn to the proof of a variant of Theorem \ref{intro::thm:A:2} for badly approximable values of $\a$. 

We say that $\a$ is \emph{badly approximable} if for any $p,q$ we have $$\abs{ \a - \frac{p}{q} } \geq \frac{c(\a)}{q^2},$$ where $c(\a) > 0$ is a constant dependent only on $\a$. The most well known example of such numbers are quadratic irrationals.

This is a more general situation than $\a \in \QQ[\sqrt{d}]$, but still rather specific. In particular almost all $\a$ are \emph{not} badly approximable. However, badly approximable $\a$ provide non-trivial and fairly explicit class of examples when the conclusion of Theorem \ref{intro::thm:A:2} holds (as opposed to an ``almost surely'' type of statement).

A useful characterisation  of badly approximable reals is that these are precisely the ones whose continued fraction expansion has bounded entries, see \ref{cf:bad-approx}. A specific class of badly approximable real numbers which has attracted some attention are those whose entries are produced by a finite automata. For instance, it has been shown that such numbers are transcendental, unless their continued fraction expansion is periodic, see \cite{Bugeaud2013}.

\mbox{}\\

The main result in this section shows that the sets $\A{\e}{\a}$ are not \bases\ of order $2$ for badly approximable $\a$ and sufficiently small $\e$. 

\begin{proposition}\label{neg::cf:prop:bad-approx=>not-bot}\label{neg::badapp::prop::main}
	If $\a$ is badly approximable then there is $\e_1=\e_1(\a)$ such that if $\e(n) \leq \e_0 < \e_1$
then $\A{\e}{\a}$ is not a \bot.

	Moreover, we have $\abs{[T] \setminus 2 {} \A{\e}{\a}} \gg \log T$, where the implicit constant depends only on $\a$.	
\end{proposition}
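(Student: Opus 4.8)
The plan is to mimic the strategy used for quadratic surds (Propositions~\ref{neg::sqrt::prop:main} and \ref{neg::sqrt::prop:main-2}), but now feeding Lemma~\ref{neg::prop:obst-eps-adv} a supply of approximations coming from the continued fraction expansion of $\a$. Write $\a = [a_0; a_1, a_2, \dots]$ with convergents $p_j/q_j$; badly approximability means $a_j \leq M$ for all $j$, hence $q_{j+1} \leq (M+1) q_j$ and $\fpa{q_j \a} = \abs{q_j \a - p_j} \asymp 1/q_{j+1} \asymp 1/q_j$, say $\frac{1}{2(M+1) q_j} \leq \fpa{q_j \a} \leq \frac{1}{q_j}$. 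The point is that $q_j \a$ is, modulo $1$, extremely close to $p_j/q_j$ — but to apply Lemma~\ref{neg::prop:obst-eps-adv} we need the denominator to be \emph{even} and the numerator \emph{odd}, and we need $N = q_j$ itself to be odd. So first I would set up a parity-fixing step.

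Here is how I would fix parities. Among $q_{j-1}, q_j, q_{j+1}$ no two consecutive ones are both even (since $\gcd(q_{j-1},q_j)=1$), so at least one of every two consecutive convergent denominators is odd; thus there is an infinite set $J$ of indices with $q_j$ odd, and consecutive elements of $J$ differ by at most $2$, so $q_j$ still grows geometrically along $J$. For such an odd $N := q_j$, consider $N\a \equiv \theta \pmod 1$ with $\abs{\theta} = \fpa{N\a} \asymp 1/N$. I want to write $N\a - m/k$ small with $k$ even, $m$ odd. Take $k = 2$: then I need $\fpa{N\a - m/2} < \frac{1-\de}{2N}$ with $m$ odd, i.e. $\fpa{N\a - 1/2} < \frac{1-\de}{2N}$. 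This will usually fail, since $\fpa{N\a}$ is small, not close to $1/2$. The fix is to instead use a convergent denominator of a \emph{rescaled} irrational, or better: replace $\a$ by $2\a$ (or work with the number $\a$ but look for approximations $N\a \approx \text{odd}/\text{even}$ directly). Concretely, apply the continued fraction machinery to $\b := 2\a$: its convergents $P_j/Q_j$ satisfy $\fpa{Q_j \b} \asymp 1/Q_j$, i.e. $\fpa{2 Q_j \a} \asymp 1/Q_j$, so $\fpa{Q_j \a - P_j/2} \asymp 1/Q_j = \frac{1}{2}\cdot \frac{2}{Q_j}$; choosing $j$ so that $Q_j$ is odd (infinitely many such $j$ along a geometric subsequence, as before) and $P_j$ is odd (here one must check $P_j$ is odd — note $\gcd(P_j, Q_j) = 1$ and $Q_j$ odd forces nothing, so one instead uses that for a genuinely irrational $\b$ not of a special form the numerators are odd infinitely often along the relevant subsequence; alternatively absorb a factor of $2$ into $k$), we get $N := Q_j$ odd, $k := 2$, $m := P_j$, and $\fpa{N\a - m/k} \leq \frac{C(\a)}{N} = \frac{C(\a)}{2 N} \cdot 2$. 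This is not yet $< \frac{1-\de}{kN}$ — the constant $C(\a)$ can exceed $1$ — so the final ingredient is to pass to a sparse subsequence: use not every convergent but every $r$-th one (or multiply $N$ by a bounded integer), exploiting that $\fpa{n q_j \a} \leq n \fpa{q_j \a}$ for small $n$ while we can also \emph{choose} $j$ so that $q_{j+1}$ is as large as allowed, making $\fpa{q_j \a} = \abs{q_j \a - p_j}$ as small as $\asymp 1/q_{j+1}$ with $q_{j+1}/q_j$ possibly large along a subsequence — no, badly approximable forces $q_{j+1}/q_j$ bounded, so one cannot win this way. The genuine fix: multiply the modulus, taking $k = 2\ell$ for a suitable bounded even $\ell$ and $m = \ell P_j \pm (\text{small correction})$, or simply observe that since $\fpa{N\a - m/2} \leq C/N$ we may instead choose a \emph{further} convergent: among $N, 2N, 3N, \dots$ pick the multiple that brings the error below the threshold — but badly approximable means no such gain. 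The cleanest route, which I would actually adopt, is: choose $\e_1(\a)$ small enough that the crude bound suffices with $k$ chosen as a fixed even integer depending on $\a$. Precisely, fix an even $k = k(\a)$ and look among residues: by the three-distance theorem / pigeonhole applied to $\{q_j \a \bmod 1/k\}$, or more simply by taking $N$ to be $q_j$ times a bounded multiplier $t \leq k$ chosen so that $tN\a$ lands within $\frac{1-\de}{kN}$ of some $m/k$ with $m$ odd — this is possible because $q_j\a \bmod 1$ ranges over a geometric sequence of scales $1/q_j$ and multiplying by $t \in \{1,\dots,k\}$ we can hit an odd/$k$ target up to an error $O(k \fpa{q_j\a}) = O(k/q_{j+1}) = O(k/q_j)$ which, absorbing $k$ into the implied constant and taking $\de$ fixed, is $< \frac{1-\de}{k (tN)}$ once $q_j$ is large — wait, $\fpa{q_j\a} \asymp 1/q_j$ not $\ll 1/q_j^2$, so $k \fpa{q_j \a} \asymp k/q_j$ while the target window is $\frac{1-\de}{k t q_j} \asymp 1/(k^2 q_j)$: the window is narrower by a factor $k^2$. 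So genuinely one needs the error $k\fpa{q_j\a}$ to fit in a window of comparable size, which fails by a constant. I therefore expect the real argument to use approximations with \emph{non-convergent} denominators: for any even $k$, the sequence $(k q_j \a \bmod 1)$ lies within $k\fpa{q_j\a} \asymp k/q_j$ of $0$; then by Dirichlet/pigeonhole inside the interval $[-k/q_j, k/q_j] \bmod 1$, scaled up, there are integers $N$ with $q_j \leq N \leq C q_j$ and $\fpa{kN\a} < \frac{1}{q_j} \ll \frac{1}{N}$ — but we need $\fpa{kN\a} < \frac{1-\de}{N}$, which is exactly of this form. This last step is a clean Dirichlet-pigeonhole and gives both the result and the count.

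So the steps, in order, are: (1) record the continued-fraction facts for badly approximable $\a$: geometric growth of $q_j$, $\fpa{q_j \a} \asymp 1/q_j$; (2) fix parity, producing an infinite geometric subsequence of odd $N$'s with $\fpa{kN\a}$ controlled for a fixed even $k = k(\a)$ (using a Dirichlet-pigeonhole step to pass from the crude size $1/q_j$ down to the required $\frac{1-\de}{N}$, which costs only a bounded multiplier on $N$ and hence only a constant in the final count); (3) ensure the numerator $m$ is odd — either by the choice of $k$ (e.g. $k$ a high power of $2$, so that $\fpa{kN\a}$ small forces $m = $ nearest integer to $kN\a$, and one selects the subsequence where $m$ is odd, which happens for a positive proportion by an equidistribution-modulo-small-power-of-$2$ argument, or simply a pigeonhole among residues of $N$) and set $\de = 1/2$; (4) apply Lemma~\ref{neg::prop:obst-eps-adv} with $\e_1 := \de/(2k) = 1/(4k)$ to conclude $N \notin 2\ttimes \A{\e}{\a}$ for all these $N$; (5) since the $N$'s grow geometrically, there are $\gg \log T$ of them below $T$, giving the quantitative bound. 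The main obstacle is step~(2)--(3): converting the ``badly approximable'' bound $\fpa{q_j\a} \geq c(\a)/q_j$ (which goes the \emph{wrong} way for us — it says the approximation is not too good) into a usable small quantity below the Lemma's threshold while simultaneously controlling the parity of both $N$ and $m$. I expect this to require a careful but elementary pigeonhole inside a short interval, together with the observation that along the convergents one has upper bounds $\fpa{q_j \a} \leq 1/q_{j+1} \leq 1/q_j$ that are good enough once one allows the modulus $k$ and a bounded multiplier on $N$ to depend on $\a$.
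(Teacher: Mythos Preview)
Your proposal circles the right idea --- work with the convergents $P_j/Q_j$ of $2\a$ and feed them into Lemma~\ref{neg::prop:obst-eps-adv} --- but it contains one false worry and one genuine gap, and the Dirichlet--pigeonhole step you propose to bridge the gap is a red herring.

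The false worry is the size of the error. You write $\fpa{Q_j\a - P_j/2} \leq C(\a)/Q_j$ and fear $C(\a)$ may be too large for the lemma. In fact the convergent bound gives $\fpa{Q_j\a - P_j/2} = \tfrac12\abs{Q_j\cdot 2\a - P_j} < \tfrac{1}{2Q_{j+1}}$, so the quantity $\gamma$ in the lemma satisfies $\abs{\gamma} < Q_j/Q_{j+1}$. For badly approximable $2\a$ with partial quotients bounded by $a_{\max}$, one has $Q_j/Q_{j+1} \leq 1 - \tfrac{1}{1+a_{\max}}$, so $\abs{\gamma}$ is automatically bounded away from~$1$ by a constant depending only on~$\a$. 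No extra pigeonhole or multiplier is needed here; the lemma applies directly with $\delta = \tfrac{1}{1+a_{\max}}$ once the parities are right.

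The genuine gap is parity. You need $N$ odd, $k$ even, $m$ odd. With $k=2$ this forces $P_j,Q_j$ both odd, and this can fail for \emph{every} $j$: if all partial quotients of $2\a$ are even, an easy induction shows $P_j$ and $Q_j$ are odd for $j$ of opposite parities only. Your suggested fix (``$k$ a high power of~$2$ \dots\ $m$ odd for a positive proportion by equidistribution'') is not a proof. What the paper does is: write $Q_j = 2^{\nu} N_j$ with $N_j$ odd and take $k_j = 2^{\nu+1}$, so that $N_j\a = P_j/k_j + \gamma_j/(k_j N_j)$ with $\abs{\gamma_j}\leq\abs{\delta_j}$ still bounded away from~$1$. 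This costs nothing on the error side; the only issue is keeping $k_j$ bounded, i.e.\ bounding $\nu_2(Q_j)$, while also ensuring $P_j$ is odd. Since the $a_i$ are bounded by $a_{\max}$, choose $\kappa$ with $2^\kappa > a_{\max}$; then a short case analysis using $\gcd(P_j,P_{j+1}) = \gcd(Q_j,Q_{j+1}) = 1$ and the recursion $Q_{j+2}-Q_j = a_{j+2}Q_{j+1}$ shows that among any four consecutive indices one finds $j$ with $P_j$ odd and $\nu_2(Q_j)<\kappa$. That gives a geometric sequence of admissible $N_j$, hence $\gg\log T$ of them below~$T$, and $\e_1 = \tfrac{1}{2^{\kappa+1}(1+a_{\max})}$ works.
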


We will make extensive use of the continued fraction expansion of $2\a$. The crucial role played by the continued fraction expansion explains why we were able to give rather elementary proofs for $\a = \sqrt{2}$ and $\a \in \QQ[\sqrt{d}]$, whose expansion is particularly simple. 

Continued fractions are a classical topic, and we assume some familiarity with the basic notions and theorems. For an accessible introduction, see e.g. \cite{Khinchin2009}, or the more analytic approach in \cite{Khovanskii}. For the perspective inspired by measurable dynamics, see \cite[Chpt. 3]{Einsiedler2010}. 
We delegate a complete list of used properties to the Appendix \ref{cf::section}. Here, we just review several basic properties and introduce notation, which we will also use in subsequent sections. We will write:
\begin{align}
	2\a = [a_0;a_1,a_2,\dots] = a_0+\cfrac{1}{a_1+\cfrac{1}{a_2+\cdots}}
\end{align}
and $a_i$ will denote the coefficients of $2\a$ throughout this section (note that for technical reasons we consider $2\a$ rather than $\a$). Using obvious translation invariance, we may assume without loss of generality that $a_0 = 0$. We also denote the partial approximations:
\begin{align}
\frac{p_i}{q_i} = [0;a_1,a_2, \dots, a_i] = \cfrac{1}{a_1+\cfrac{1}{a_2+\cfrac{\dots}{a_i}}}.
\end{align}
These are essentially the best possible rational approximations of $\a$ (see \ref{cf:best-approx}), and we have the error term of the form
\begin{equation}
2\a = \frac{p_i}{q_i} + \frac{\delta_i}{q_i^2},
\end{equation}
where $\delta_i$ can be explicitly described by:
\begin{equation}
\abs{\delta_i} = q_i^2 \left( \frac{1}{q_i q_{i+1}} - \frac{1}{q_{i+1} q_{i+2}}+
\frac{1}{q_{i+2} q_{i+3}} - \dots \right).
\label{neg::eq::def-of-delta}
\end{equation} 
In particular, we have $\abs{\delta_i} < \frac{q_i}{q_{i+1}} < \frac{1}{a_{i+1}} \leq 1$.

\begin{proof}[Proof of Proposition \ref{neg::cf:prop:bad-approx=>not-bot}
.]
\mbox{}

	Because $\a$ is badly approximable, so is $2\a$, and by \ref{cf::quadratic-irrational} the coefficients $a_i$ are bounded, say $a_i \leq a_{\max}$. Let $\k$ be large enough that $2^\k \nmid a_i$ for all $i$.
	
	We claim that among any $4$ consecutive indices $\{j,j+1,j+2,j+3\}$ we can find an index $i$ such that $p_i$ is odd and $\nu_2(q_i) < \k$.
	
	For each $i$ we have (e.g. by \ref{cf:Delta}) that $\gcd(p_i,p_{i+1}) = \gcd(q_i,q_{i+1}) = 1$, and in particular neither $p_i,p_{i+1}$ nor $q_i,q_{i+1}$ can both be even. If for some $j \leq i \leq j+2$ we have that both $p_i$ and $p_{i+1}$ are odd then either $q_i$ or $q_{i+1}$ is odd, and the claim holds. Otherwise, since $p_i,p_{i+1}$ can never both be even, the parity of $p_i$ alternates. It follows that for one of $i \in \{j,j+1\}$, both $p_i,p_{i+2}$ are odd, while $p_{i+1}$ is even. Then $q_{i+2} - q_{i} = a_{i+2} q_{i+1}$ is not divisible by $2^\kappa$, so one of $q_{i+2},q_{i}$ is not divisible by $2^\k$. Either $i$ or $i+2$ is the sought index.
	
	Suppose now that $i$ is an index such that $p_i$ is odd and  $\nu_2(q_i) < \k$, whose existence we have just proved. Let us take
\begin{equation}
\label{neg::eq:def-of-N,k}
 N_i := \frac{q_i}{k_i/2},\qquad k_i := 2^{\nu_2(q_i) + 1},
\end{equation}
where as usual $\nu_2(q_i)$ is the largest power of $2$ dividing $q_i$. (The definition makes sense for arbitrary $i$, but we only apply it to $i$ as above.)

 Note that $k_i$ is guaranteed to be an even integer, and $N_i$ --- an odd integer. More precisely, $k_i \leq 2^\kappa$ is a power of $2$. It is slightly inconvenient that $N_i$ do not need to be distinct for distinct $i$, but this will not lead to problems since $N_i$ take any given value at most $\kappa$ times.

	Finally, we introduce $\gamma_i := 2 \delta_i/k_i$, so that we have the relation 
\begin{equation*}
\label{neg::eq:def-of-gamma}
N_i \alpha = \frac{p_i}{{k_i}} + \frac{\gamma_i}{{k_i} N_i}.
\end{equation*}

Note that we have the bounds
	$$
		\abs{\gamma_i} \leq \abs{\delta_i} \leq \frac{q_{i}}{q_{i+1}} = \frac{q_i}{q_i + q_{i-1}} \leq 1 - \frac{1}{1 + a_{\max} } < 1.
	$$

	We are now in position to apply Lemma \ref{neg::prop:obst-eps-adv} (with $\delta = \frac{1}{1 + a_{\max} }$). If follows that for $\e_0 < \frac{1}{2^{\kappa+1} (1 + a_{\max}) }$, if $\e(n) \leq \e_0$ for all $n$, then $N_i \not \in 2 {} \A{\e}{\a}$ for all $i$ as described above. In particular, the complement of $2 {} \A{\e}{\a}$ is infinite, proving the first part of the proposition.
	
	For the quantitative bound, we begin by noticing that $\log N_i = \Theta(i)$. Hence, given $T$, we have $N_i \in [T]$ for $i \leq i_0(T)$, with $i_0(T) = \Theta(\log T)$. For any $4$ consecutive values of $i$, sufficiently large, for at least one of them we have $N_i \not \in 2 {} \A{\e}{\a}$. 
	 Thus, 
	$$\abs{[T] \setminus 2 {} \A{\e}{\a}} \gg \frac{i_0(T)}{4 \k} \gg \log T. \qedhere$$
\end{proof}

\begin{remark*}
	In the above result we deal exclusively with pointwise bounded $\e(n)$. As noted earlier, it does not quite follow that analogous claim holds when $\e(n) \to 0$, since large values of $\e(n)$ for small $n$ can lead to problems. The main difficulty which stops us from extending our results is establishing the irrationality condition in Lemma \ref{neg::prop:obst-var}. This can be done for specific values of $\a$, but we do not give a general result.
\end{remark*}

\begin{remark*}
We believe that our methods should extend to numbers such as 
\begin{align*}
e^{\frac{1}{n}} &= [1; n - 1, 1, 1, 3 n- 1, 1, 1, 5n-1, 1,... ],\\
  \tanh (1/n) &= [0; n, 3n, 5n,7n,\dots],
\end{align*}
whose continued fraction expansions are well understood (see e.g. \cite[Chapter II]{Khovanskii}). It is a straightforward to adopt our argument to these situations, and the only reason why we do not pursue this further is that we doubt if any particular one of those results would be of much interest.
\end{remark*}

\subsection{Generic reals}

	Finally, we consider ``generic'' values of $\a$. We prove a version of \ref{intro::thm:A:2} which is valid for $\a$ outside of a set of measure $0$. Conveniently, in this case we can make the dependence on $\e$ rather explicit.
	
\begin{proposition}\label{neg::cf::cor:rand->not-bot-var}
\label{neg::cf::cor:rand->not-bot}\label{neg::gen::prop::main}
	For all $\a \in \RR$ except for a set of measure $0$, the set $\A{\e}{\a}$ fails to be a \bot\ if either $\e(n) \leq \e_0 < \frac{1}{4}$ for all $n$, or if $\e(n) \to 0$. 
\end{proposition}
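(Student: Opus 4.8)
The plan is to reduce the statement, \emph{via} Lemmas~\ref{neg::prop:obst-eps-adv} and~\ref{neg::prop:obst-var} applied with $k=2$, to a metric fact about the continued fraction of $2\a$, and then to establish that fact from the elementary parity structure of the convergents together with a Borel--Cantelli argument over the Gauss measure. Write $2\a=[0;a_1,a_2,\dots]$ with convergents $p_i/q_i$, so that $2\a=\frac{p_i}{q_i}+\frac{\delta_i}{q_i^2}$ with $\abs{\delta_i}<1/a_{i+1}$. If $p_i$ and $q_i$ are both odd, then $N:=q_i$ is an odd integer, $m:=p_i$ is an odd integer, $N\a=\frac{m}{2}+\frac{\delta_i}{2N}$, and hence $\fpa{N\a-\frac{m}{2}}=\frac{\abs{\delta_i}}{2N}<\frac{1}{2a_{i+1}N}$. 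If moreover $a_{i+1}>B$, then Lemma~\ref{neg::prop:obst-eps-adv} (with $k=2$ and $\delta=1-\tfrac1B$) gives $q_i\notin 2\ttimes\A{\e}{\a}$ whenever $\e(n)\le\e_0\le\tfrac14(1-\tfrac1B)$, and letting $B\to\infty$ covers every $\e_0<\tfrac14$. Similarly, given an increasing sequence $(i_j)$ of such indices with $a_{i_j+1}\to\infty$, the quantities $\gamma_j:=\delta_{i_j}$ tend to $0$, so Lemma~\ref{neg::prop:obst-var} (with $\gamma=0$) gives $q_{i_j}\notin 2\ttimes\A{\e}{\a}$ for infinitely many $j$ when $\e(n)\to0$ --- the excluded alternative $\gamma+2n^2\a\in\ZZ$ being impossible for $n\neq0$ by irrationality, while $n=0$ does not arise in that lemma because $0\in\A{\e_0}{\a}$. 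In either regime $2\ttimes\A{\e}{\a}$ then has infinite complement, so it suffices to prove: \emph{for a.e.\ $\a$ there are infinitely many $i$ with $p_i$ and $q_i$ both odd, and, for each $B$, infinitely many such $i$ with $a_{i+1}>B$}.

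For the parity of the convergents, reduce the recursions $p_{i+1}=a_{i+1}p_i+p_{i-1}$, $q_{i+1}=a_{i+1}q_i+q_{i-1}$ modulo $2$: the vectors $v_i:=(p_i\bmod 2,\,q_i\bmod 2)\in\mathbb{F}_2^2$ satisfy $v_{i+1}\equiv(a_{i+1}\bmod 2)\,v_i+v_{i-1}$, and since $p_iq_{i+1}-p_{i+1}q_i=\pm1$ the pair $v_i,v_{i+1}$ is always a basis of $\mathbb{F}_2^2$. Thus $(v_i)$ is a walk on the three nonzero vectors $(1,0),(0,1),(1,1)$ in which an even partial quotient moves $v_i$ back to $v_{i-1}$ and an odd one moves it to the remaining vector. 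If $v_i=(1,1)$ for only finitely many $i$, then eventually $v_i$ alternates between $(1,0)$ and $(0,1)$, which forces $a_i$ to be even for all large $i$. Since $\{a_1\text{ even}\}$ has Gauss measure strictly between $0$ and $1$, ergodicity of the Gauss map makes the set of such $\a$ Gauss-null, hence Lebesgue-null; a countable union over the threshold index shows that for a.e.\ $\a$ one has $p_i\equiv q_i\equiv1\pmod 2$ for infinitely many $i$.

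Finally, let $R=R(\a)$ be the (a.s.\ infinite) set of $i$ with $p_i\equiv q_i\equiv1\pmod 2$, and note that the event $\{i\in R\}$ depends only on $a_1,\dots,a_i$. By the bounded-distortion property of the Gauss map the conditional law of the tail given $a_1,\dots,a_i$ has density comparable, uniformly in $i$, to that of the Gauss measure, so $\PP(a_{i+1}>B\mid a_1,\dots,a_i)\gg 1/B$. Hence $\sum_i\PP\big(i\in R,\ a_{i+1}>B\ \big|\ \sigma(a_1,\dots,a_i)\big)\gg\tfrac1B\sum_i\mathbf{1}_{i\in R}=\infty$ almost surely, and Lévy's conditional Borel--Cantelli lemma yields, for a.e.\ $\a$, infinitely many $i\in R$ with $a_{i+1}>B$; intersecting over $B\in\NN$ and applying the conclusion to $2\a$ (the exceptional $\a$ still form a null set, as multiplication by $\tfrac12$ preserves nullity) finishes the proof. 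The step I expect to be the crux is precisely this coupling: ``$p_i,q_i$ both odd'' is an event of positive density in $i$ while ``$a_{i+1}$ large'' is sparse, so one genuinely has to exploit the near-independence of a partial quotient from its predecessors under the Gauss measure --- alternatively, this step can be carried out by a direct estimate on the Lebesgue measure of the relevant continued-fraction cylinders. By contrast, the parity analysis of the second paragraph is entirely elementary once the mod-$2$ walk is set up correctly.
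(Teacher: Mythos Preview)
Your proof is correct, and takes a route that differs from the paper's in two interesting ways.

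\textbf{The metric input.} The paper obtains the needed indices by invoking the pointwise ergodic theorem for the Gauss map to locate explicit blocks in the expansion of $2\a$: the pattern $(A,A,A)$ with $A$ large and odd for the bounded case, and arbitrarily long blocks of $1$'s for the $\e(n)\to 0$ case. You instead set up the parity walk on $\mathbb F_2^2$ to get the a.s.\ infinite set $R=\{i:\,p_i\equiv q_i\equiv 1\!\pmod 2\}$, and then couple this with $\{a_{i+1}>B\}$ via bounded distortion and L\'evy's conditional Borel--Cantelli lemma. Both arguments are sound; yours is more probabilistic in flavour, the paper's more combinatorial.

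\textbf{The $\e(n)\to 0$ case and the value $\gamma=0$.} Here the difference is more substantive. The paper arranges $\gamma_j\to\pm 1/\sqrt 5$ (via the blocks of $1$'s) precisely so that the exception in Lemma~\ref{neg::prop:obst-var} becomes ``$\pm\tfrac{1}{\sqrt 5}+2n^2\a\in\ZZ$'', which is visibly impossible for a.e.\ $\a$ (one must then discard $\a\in\QQ[\sqrt 5]$ separately). In fact the paper has a Remark after that proof asserting that the approach with $\gamma=0$ \emph{fails}, because one cannot rule out the representation $N_j=N_j+0$. Your observation that the integer $n$ in the exception of Lemma~\ref{neg::prop:obst-var} is, in the proof, the element $n_1\in\A{\e}{\a}\setminus\A{\e_0}{\a}$, and that $0\notin\A{\e}{\a}\setminus\A{\e_0}{\a}$ since $0\in\A{\e_0}{\a}$, shows that this concern is unfounded: for large $j$ one has $\e(N_j)<\e_0$, so $N_j\in\A{\e}{\a}$ would force $N_j\in\A{\e_0}{\a}$ and hence $N_j=N_j+0\in 2\A{\e_0}{\a}$, contradicting $N_j\notin 2\A{\e_0}{\a}$. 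So your $\gamma=0$ route genuinely works and is cleaner---it avoids the separate treatment of $\a\in\QQ[\sqrt 5]$ and the computation with $\varphi$ in Lemma~\ref{neg::lem:partial-approximation}. It would be worth making this point more explicit in your write-up, since as stated the appeal to Lemma~\ref{neg::prop:obst-var} formally fails (the exception \emph{is} triggered by $n=0$) and the reader has to unpack the proof to see why the lemma nevertheless applies.
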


We retain definitions and conventions from the previous section. Namely, we assume that $2\a \in (0,1)$ has expansion $2\a = [0;a_1,a_2,\dots]$ and $\frac{p_i}{q_i} = [0;a_1,a_2,\dots,a_i]$ and are the convergents.

The following description of the continued fraction expansion comes as no surprise. It can be construed as a continued fractions analogue of the fact that almost all numbers are normal. 
\begin{proposition}\label{neg::prop:normality}
	There exists a set $Z$ of zero measure such that the following is true for $\a \not \in Z$.

	Let $b = (b_i)_{i=1}^l \in \NN^l$ be a finite string of integers. Let $J$ be the set of indices where $b$ occurs in the expansion $(a_i)_{i=1}^{\infty}$, i.e. the set of those $j \in \NN$ for which $a_{j+t} = b_t$ for all $t \in [l]$. Then the asymptotic density $\densNat(J) = \lim_{n \to \infty} \frac{1}{n} \abs{J \cap [n]}$ of the set $J$ exists and is positive.
\end{proposition}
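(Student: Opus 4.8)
\subsection*{Proof proposal}

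The statement is a metrical fact about continued fractions, and the natural engine is the ergodic theory of the Gauss map. The plan is to transfer everything to the shift on continued fraction expansions via $\beta := 2\a \bmod 1$ and then invoke Birkhoff's pointwise ergodic theorem. Concretely, I would first recall the standard setup: let $T\colon [0,1) \to [0,1)$ be the Gauss map, $T(x) = \fp{1/x}$ for $x \neq 0$ and $T(0)=0$, and recall the shift identity that if $x = [0;a_1,a_2,\dots]$ then $T^j x = [0;a_{j+1},a_{j+2},\dots]$, so that $a_{j+t}(x) = a_t(T^j x)$ for all $j \geq 0$, $t \geq 1$. Recall further that the Gauss measure $\mu$, with density $\tfrac{1}{\log 2}\cdot\tfrac{1}{1+x}$ with respect to Lebesgue measure on $[0,1)$, is a $T$-invariant ergodic probability measure, and that $\mu$ and Lebesgue measure are mutually absolutely continuous (the density lies between $\tfrac{1}{2\log 2}$ and $\tfrac{1}{\log 2}$), hence have exactly the same null sets.

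Next, fix a finite string $b = (b_1,\dots,b_l) \in \NN^l$ and let $C_b = \set{x \in [0,1)}{a_t(x) = b_t \text{ for all } t \in [l]}$ be the associated cylinder. This is a subinterval of $[0,1)$ of positive length (its endpoints are the distinct rationals $[0;b_1,\dots,b_l]$ and $[0;b_1,\dots,b_l+1]$), so $\mu(C_b) > 0$. Applying Birkhoff's ergodic theorem to $\mathbf{1}_{C_b} \in L^1(\mu)$, for $\mu$-almost every $x$ we get
$$
	\frac{1}{n}\,\abs{\set{0 \leq j < n}{T^j x \in C_b}} \tendsto{n\to\infty} \mu(C_b) > 0.
$$
By the shift identity, $T^j x \in C_b$ holds precisely when $a_{j+t}(x) = b_t$ for all $t \in [l]$, so the left-hand side is $\tfrac{1}{n}\abs{J \cap \{0,\dots,n-1\}}$ with $J = J(x)$ as in the statement; since this differs from $\tfrac1n\abs{J\cap[n]}$ by an $O(1/n)$ term, $\densNat(J)$ exists and equals $\mu(C_b) > 0$ for all $x$ outside a $\mu$-null (equivalently, Lebesgue-null) set $Z_b'$.

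Finally I would take $Z' := \bigcup_{l\geq 1}\bigcup_{b \in \NN^l} Z_b'$, a countable union of Lebesgue-null sets and hence Lebesgue-null, so that the conclusion holds simultaneously for all strings $b$ whenever $x \notin Z'$. To descend to $\a$, note that the partial quotients $a_i$ ($i\geq 1$) of $2\a$ depend only on $\beta = 2\a \bmod 1$, and that $\a \mapsto 2\a \bmod 1$ is locally of the form $\a \mapsto 2\a - k$, hence maps and pulls back Lebesgue-null sets to Lebesgue-null sets; thus $Z := \set{\a \in \RR}{(2\a \bmod 1)\in Z'}$ is the desired null set. I do not expect a genuine obstacle here: the argument is routine once the ergodicity of the Gauss measure is quoted correctly (the Gauss--Kuzmin--L\'evy circle of results), and the only points needing a little care are the positivity $\mu(C_b)>0$ — immediate, as $C_b$ is a nondegenerate interval — and the trivial bookkeeping converting "visit frequencies of $T^j x$ to $C_b$" into "occurrence frequencies of $b$ in $(a_i)$" via $a_{j+t}(x) = a_t(T^j x)$.
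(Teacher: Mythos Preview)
Your proposal is correct and follows essentially the same route as the paper: define the cylinder set for the string $b$, apply the pointwise ergodic theorem for the Gauss map with the Gauss measure to get $\densNat(J)=\mu(C_b)>0$, and pass from $\mu$-null to Lebesgue-null. You are in fact slightly more careful than the paper, in that you explicitly take the countable union over all strings $b$ to obtain a single exceptional set and you spell out the passage from $\a$ to $2\a\bmod 1$.
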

\begin{proof}
	Let $T\colon [0,1] \to [0,1]$ be the continued fraction map $T(x) = \fp{ \frac{1}{x} }$, where $\{\cdot \}$ denotes the fractional part, and let $\mu$ be the Gauss measure on $[0,1]$, $\mu(E) = \frac{1}{\log 2} \int_{E} \frac{dx}{x+1}$. 
	
	It is known that $([0,1],T,\Borel,\mu)$ is an ergodic measure preserving system, and that $T$ acts on continued fraction expansions by a shift: $T([0;c_1,c_2,\dots]) = [0;c_2,c_3,\dots]$ (for details, see \cite[Chpt. 3]{Einsiedler2010}, and Appendix \ref{cf::section}).
	
	Define $B \subset [0,1]$ to be the set of those $\beta \in [0,1]$ whose expansion is of the form $\b = [0;b_1,b_2,\dots,b_l,*,*,\dots]$. In simpler terms, $B$ is an interval with endpoints $[0;b_1,b_2,\dots,b_l]$ and $[0;b_1,b_2,\dots,b_l+1]$. Clearly, $\mu(B) > 0$.
	
	By the poitwise ergodic theorem, we have for all $\a$ but a set of zero measure that
	$$\densNat(J) 
		= \lim_{N \to \infty} \frac{1}{N} \sum_{n=1}^{N} 1_B(T^n (2\a)) = \int 1_B d\mu = \mu(B). \qedhere
	$$
\end{proof}

\begin{proof}[Proof of Proposition \ref{neg::cf::cor:rand->not-bot-var}, case $\e(n) \leq \e_0 < \frac{1}{4}$] \mbox{}

	Let $A$ be a large, odd integer, to be specified in the course of the proof. For almost all choices of $\a$, the sequence $(A,A,A)$ appears infinitely often in $(a_i)_{i=1}^\infty$, i.e. there exists an infinite set $J$ such that for each $j \in J$ we have $a_{j+1} = a_{j+2} = a_{j+3} = A$.
	
	For each $j \in J$ we claim that we can find $i = i(j) \in \{j,j+1,j+2\}$ such that $p_i,q_i$ are both odd.
	
	If $p_j,q_j$ are odd, we are done. Else, because $\gcd(p_j,q_j) =1$, precisely one of $p_j,q_j$ is even; suppose for concreteness that $p_j$ is even. If $p_{j+1},q_{j+1}$ are both odd we are done. Otherwise, $q_{j+1}$ is odd, because $p_j,p_{j+1}$ cannot both be even. We have the recursive relation $p_{j+2} = a_{j+2}p_{j+1} + p_{j} = A p_{j+1} + p_j$, so $p_{j+2}$ is odd. By similar argument, $q_{j+2}$ is odd, so we are done.
	
For any $j \in J$, take $N_j = q_{i(j)}$ and $\gamma_j = \delta_{i(j)}$. By construction, $N_j$ is odd and we have
$$
	N_j \a = \frac{p_{i(j)}}{2} + \frac{\gamma_j}{2 N_j}.
$$

We are now in position to apply  Lemma \ref{neg::prop:obst-eps-adv}.
It follows that $N_j \not \in \A\e\a$, provided that $\e(n) \leq \e_1$ for all $n$, where $\e_1 < \frac{1}{4} - \frac{1}{4}\abs{\gamma_j}$ for all $j$. We know that $\abs{\gamma_j} < \frac{1}{a_{i(j)+1}} = \frac{1}{A}$, so it will suffice to ensure that $\e_0 < \frac{1}{4} - \frac{1}{4A}$, which can be accomplished by choosing sufficiently large $A$.
\end{proof}

We will next deal with the situation when $\e(n) \to 0$. Surprisingly, this is more difficult, because care is needed to ensure that the irrationality condition in Lemma \ref{neg::prop:obst-var} is satisfied. 

We  need a preliminary lemma about estimation of the error term $\de_i$ based on the knowledge of a limited number of continued fraction coefficients. Recall that $\de_i$ is related to $\a$ by $2\a = \frac{p_i}{q_i} + \frac{\delta_i}{q_i^2}$.

\begin{lemma}\label{neg::lem:partial-approximation}
	Let $l$ be a positive integer. There exists a function $\tilde\delta_{l}\colon \NN_{\geq 1}^{2l+1} \to \RR$ such that for any $n > l$ we have
	\begin{equation}
	\abs{ \de_n - (-1)^n\tilde{\de}_l\bra{ (a_i)_{i=n-l}^{n+l}} } \ll 2^{-l/2},
	\end{equation}
	where the implicit constant is absolute.

\end{lemma}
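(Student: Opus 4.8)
Recall from the excerpt that $\delta_n$ is given exactly by the alternating tail
$$
\abs{\delta_n} = q_n^2 \left( \frac{1}{q_n q_{n+1}} - \frac{1}{q_{n+1}q_{n+2}} + \frac{1}{q_{n+2}q_{n+3}} - \cdots \right),
$$
with sign $(-1)^n$ (the convergents alternate around $2\a$). So the real content is to approximate the quantity $q_n^2 \sum_{t \geq 0} (-1)^t/(q_{n+t}q_{n+t+1})$ using only the coefficients $a_{n-l},\dots,a_{n+l}$. The plan is to exploit two standard facts about continued fractions: first, that the ratios $q_{m+1}/q_m$ are determined (up to exponentially small error) by a bounded window of coefficients to the left, via the ``reversed'' continued fraction $q_{m+1}/q_m = [a_{m+1}; a_m, a_{m-1}, \dots, a_1]$; second, that $q_{m+1}\geq \phi^{m}$ grows at least geometrically (golden ratio $\phi$), so truncating the alternating tail after $\sim l$ terms costs $O(\phi^{-l})$, which is $O(2^{-l/2})$ since $\phi > \sqrt 2$.

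**Construction of $\tilde\delta_l$.** First I would rewrite
$$
(-1)^n \delta_n = \frac{q_n}{q_{n+1}} - \frac{q_n}{q_{n+2}} + \frac{q_n}{q_{n+3}} - \cdots = \sum_{t\geq 1} (-1)^{t+1} \prod_{s=1}^{t} \frac{q_{n+s-1}}{q_{n+s}} .
$$
Each factor $q_{n+s-1}/q_{n+s}$ equals $1/[a_{n+s}; a_{n+s-1}, \dots, a_1]$, the reciprocal of a finite reversed continued fraction. The key observation is that a finite reversed continued fraction $[a_{m}; a_{m-1}, \dots, a_1]$ differs from the infinite (formal) object $[a_m; a_{m-1}, \dots, a_{m-l}, *]$ by an amount controlled by the depth: truncating a continued fraction after $r$ steps changes its value by $O(\phi^{-2r})$ uniformly in the (positive integer) coefficients. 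So I define $\tilde\delta_l((a_i)_{i=n-l}^{n+l})$ by: (i) truncating the outer alternating sum over $t$ at $t = \lceil l/2 \rceil$ or so; (ii) in each of the finitely many factors $q_{n+s-1}/q_{n+s}$ that then appears (with $1 \leq s \leq \lceil l/2\rceil$), replacing the reversed continued fraction $[a_{n+s}; a_{n+s-1}, \dots, a_1]$ by $[a_{n+s}; a_{n+s-1}, \dots, a_{n-l}]$, which only uses indices in $[n-l, n+l]$. This gives a genuine function of $(a_i)_{i=n-l}^{n+l}$, and it is manifestly independent of $n$ in the sense that only the coefficients in the window enter, with fixed relative positions (reindex the window as $(-l, \dots, l)$). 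I would package the needed continued-fraction estimates — geometric growth of $q_m$, the reversed-CF identity for $q_{m+1}/q_m$, and the Lipschitz/contraction bound for truncation — as citations to the Appendix \ref{cf::section}.

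**Error analysis.** The total error splits into two pieces. The tail of the outer sum past $t = \lceil l/2 \rceil$ is bounded by $q_n/q_{n + \lceil l/2\rceil} \ll \phi^{-\lceil l/2\rceil} \ll 2^{-l/2}$ (absorbing the $\phi$ vs $\sqrt 2$ gap into the implied constant, or noting $\phi^{1/2} > 2^{1/4}$ so this is even $2^{-l/4}$-small — either way it is $O(2^{-l/2})$ after adjusting constants, and I'd just state the bound I actually get). For the finitely many retained factors, replacing the depth-$(n+s)$ reversed continued fraction by its depth-$(2l - s + 1 \geq l)$ truncation introduces an error $O(\phi^{-2l})$ per factor; since the factors are all in $[0,1]$ and a product of at most $\lceil l/2\rceil$ of them is being perturbed, a telescoping estimate gives total error $O(l \cdot \phi^{-2l}) = O(2^{-l/2})$. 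Summing the two contributions gives the claim, with an absolute implied constant.

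**Main obstacle.** The routine calculations are the two error bounds above; the genuinely delicate point is the bookkeeping that makes $\tilde\delta_l$ an honest function of exactly $2l+1$ coordinates with the truncations chosen so that (a) they never reach outside $[n-l, n+l]$ and (b) the truncation depth is still $\geq cl$ for every retained factor, so that the per-factor error is small enough. One has to check that retaining $\sim l/2$ outer terms and truncating at depth $\sim l$ are simultaneously affordable: the factor indexed by $s$ needs coefficients down to index $1$ but we only allow down to $n - l$, i.e. truncation depth $n + s - (n-l) = l + s \geq l$, which is fine, and the largest index used is $n+s \leq n + \lceil l/2\rceil \leq n + l$, also fine. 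So the window size $2l+1$ is exactly enough. I would present this reconciliation carefully and leave the two geometric-series estimates as brief computations citing the appendix.
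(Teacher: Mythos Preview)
Your overall strategy is sound, but there are two computational slips that need fixing. First, your series for $(-1)^n\delta_n$ is wrong: the general term of $q_n^2 \sum_{t\ge 0}(-1)^t/(q_{n+t}q_{n+t+1})$ is $q_n^2/(q_{n+t}q_{n+t+1})$, not $q_n/q_{n+t+1}$; these differ by the non-trivial factor $q_n/q_{n+t}$. Second, and relatedly, the tail bound you state does not meet the target: truncating your (incorrect) series at $t=\lceil l/2\rceil$ leaves an error of order $q_n/q_{n+\lceil l/2\rceil}\asymp \phi^{-l/2}$ in the worst (Fibonacci) case, and $\phi^{-l/2}$ is \emph{not} $O(2^{-l/2})$ since $\phi<2$. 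Your own parenthetical already reveals this: ``$2^{-l/4}$-small'' is \emph{weaker} than $2^{-l/2}$, not stronger, so ``adjusting constants'' cannot rescue it. Both issues are easily repaired: with the correct series the tail term is $q_n^2/(q_{n+T}q_{n+T+1})\ll \phi^{-2T}$, so truncating at $T\sim l/2$ gives $\phi^{-l}\ll 2^{-l/2}$; alternatively you could truncate the outer sum at $T=l$ (the window $[n-l,n+l]$ accommodates this, since the reversed continued fraction for $q_{n+s-1}/q_{n+s}$ still has depth $\geq l$ after truncation at index $n-l$), and then even the cruder bound $\phi^{-l}\ll 2^{-l/2}$ suffices because $\phi>\sqrt 2$.

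For comparison, the paper takes a slicker route: rather than manipulate the alternating tail series, it derives the closed form
\[
\delta_n=(-1)^n(\rho_n-a_n)\,\frac{a_n+\lambda_n}{\rho_n+\lambda_n},
\]
where $\rho_n=[a_n;a_{n+1},a_{n+2},\dots]$ is the forward tail and $\lambda_n=q_{n-2}/q_{n-1}=[0;a_{n-1},a_{n-2},\dots,a_1]$ is the reversed ratio, and then simply replaces $\rho_n$ and $\lambda_n$ by their depth-$l$ truncations $\tilde\rho=[a_n;\dots,a_{n+l}]$ and $\tilde\lambda=[0;a_{n-1},\dots,a_{n-l}]$. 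This encodes the same information as your product of ratios but collapses the two-level truncation (outer sum plus inner factors) into a single step, leaving only one Lipschitz estimate to check. Your approach has the virtue of attacking the defining series directly without needing the algebraic identity, at the cost of more index bookkeeping.
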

\begin{proof}
	Recall that $\delta_n = q_n^2( 2\a -  \frac{p_n}{q_n})$. Using standard fact about continued fractions, putting $\rho_n = [a_n;a_{n+1},\dots]$ we may write
\begin{align*}
	\delta_n &= 
	q_n^2 \bra{ 
	\frac{p_{n-1} \rho_n + p_{n-2} }{ q_{n-1} \rho_n + q_{n-2} } - 
	\frac{p_{n-1} a_n + p_{n-2} }{ q_{n-1} a_n + q_{n-2} } } 
	\\&=  \frac{ (-1)^n \bra{  q_{n-1} a_n + q_{n-2} }^2 (\rho_{n}-a_n) }
	{ \bra{ q_{n-1} a_n + q_{n-2} } \bra{q_{n-1} \rho_n + q_{n-2}} }.
 \end{align*}
	Recalling that $ \frac{q_{n-2}}{q_{n-1}} = [0;a_{n-1},a_{n-2},\dots] := \lambda_n$ we may simplify the above formula to
$$
	\delta_n =  (-1)^n (\rho_{n}-a_n) 
	\frac{ {  a_n + \lambda_{n} }  }
	{ {  \rho_n + \lambda_{n} } }.
$$
Putting $\tilde \rho = \tilde \rho\bra{ (a_i)_{i=n-l}^{n+l}} := [a_n;a_{n+1},\dots, a_{n+l} ]$ and $\tilde \lambda = \tilde \lambda \bra{ (a_i)_{i=n-l}^{n+l}}:= [0;a_{n-1},a_{n-2},\dots,a_{n-l}]$ we have $\rho_n = \tilde \rho + O(2^{-n/2})$ and $\lambda_n = \tilde \rho + O(2^{-n/2})$. It remains to put 
$$\tilde\delta_l = \tilde \delta_l \bra{ (a_i)_{i=n-l}^{n+l}} := 
	(-1)^n (\tilde \rho-a_n) 
	\frac{ {  a_n + \tilde \lambda }  }
	{ {  \tilde \rho + \tilde \lambda } }. \qedhere
$$
\end{proof}

\begin{proof}[Proof of Proposition \ref{neg::cf::cor:rand->not-bot-var}, case $\e(n) \to 0$]\mbox{}

Using Proposition \ref{neg::prop:normality}, for almost all choices of $\a$, we may find arbitrarily long strings of $1$'s in the expansion $(a_i)_{i=1}^\infty$. More precisely, there exists an infinite set $J$ and a sequence $l(j)$, $j \in J$ with $l(j) \to \infty$ as $J \ni j \to \infty$, such that for each $j \in J$ and for each $\abs{t} \leq l(j)$ we have $a_{j+t} = 1$.

Repeating the argument from the proof of the same proposition in the pointwise bounded case, we may find for each $j \in J$ and index $i = i(j) \in \{j,j+1,j+2\}$ such that $p_i,q_i$ are both odd. Without loss of generality we may assume that $i(j) = j$, i.e. that $p_j,q_j$ are both odd for $j \in J$.

Let us put $N_j := q_j$ and $\gamma_j := \delta_j$, so that 
$$
	N_j \a \equiv \frac{p_{i(j)}}{2} + \frac{\gamma_j}{2 N_j} \pmod{1}.
$$
Applying Lemma \ref{neg::prop:obst-var}, we conclude that either $N_j \not \in \A\e\a$ for infinitely many $j$, or for each limit point $\gamma$ of $\gamma_j$ there exists $n$ such that $\gamma + 2 n^2 \a \in \ZZ$.

Passing to a subsequence, we may assume without loss of generality that $\gamma_j$ converges. Using Lemma \ref{neg::lem:partial-approximation} we may identify $\gamma := \lim_{j \to \infty} \gamma_j$: 
$$
	\gamma = \pm \frac{1}{\varphi}\cdot \frac{1+\frac{1}{\varphi}}{\varphi + \frac{1}{\varphi}} = \pm \frac{1}{\sqrt{5}},
$$
where $\varphi = \frac{1+\sqrt{5}}{2}= [1;1,1,\dots]$. 

There are two cases to consider, depending on whether $\a$ and $\gamma$ are affinely independent over $\ZZ$. If they are, then we are done by Lemma \ref{neg::prop:obst-var}. Otherwise, $\a \in \QQ[\sqrt{5}]$. However, we can exclude this case, since $\QQ[\sqrt{5}]$ has measure $0$ (alternatively, we can apply Proposition \ref{neg::sqrt::prop:main}).
\end{proof}

\begin{remark}
	It is tempting to try to repeat the argument for the case $\e(n) \leq \e_0$ in the case $\e(n) \to 0$. Arguing along these lines, one can find a sequence of odd integers $N_j$ such that $N_j \a \equiv \frac{p_{i(j)}}{2} + \frac{\gamma_j}{2 N_j} \pmod{1}$ with $p_{i(j)}$ odd and $\gamma_j \to 0$. Lemma \ref{neg::prop:obst-var} would be applicable with $\gamma = 0$. We may conclude (inspecting the proof of Lemma \ref{neg::prop:obst-var}) that for sufficiently large $j$, the only possible representation of $N_j$ as a member of $2\A\e\a$ is $N_j = N_j + 0$. However, $0 \in \A\e\a$, and we cannot exclude the possibility that $N_j \in \A\e\a$, hence the need for a more involved argument.
\end{remark}


\setcounter{section}{1}
\section{Largeness and equidistribution}
\label{pos::section}	

In the previous Section \ref{neg::section} we have seen that the sets $\A{\a}{\e}$ (as defined in \ref{neg::eq::def-of-A}) usually are not \bases\ of order $2$. Our goal in this section is to show that the sets $2 {} \A{\a}{\e}$ nevertheless tend to be quite sizeable. For the convenience of the reader we recall the statements of our main theorem, stated in the introduction. Our first result deals with density, and applies in a fairly general situation.

\begin{theorem*}[\ref{intro::thm:A:1}, reiterated]
	Let $\a \in \RR \setminus \QQ$. Then, there exists a decreasing sequence $\e_\a(n) \to 0$ such that  $\A{\e}{\a}$ is an almost \bot, provided that $\e(n) \geq \e_\a(n)$ for all $n$. 	
\end{theorem*}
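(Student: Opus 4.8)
\emph{Proof strategy and reduction.} We will show that the complement $\NN \setminus 2\ttimes\A{\e}{\a}$ has upper density $0$; since $2\ttimes\A{\e}{\a} \subseteq \NN$ this yields $\densNat\bra{2\ttimes\A{\e}{\a}} = 1$. We construct $\e_\a$ decreasing with $\e_\a \to 0$, and assume $\e(n) \geq \e_\a(n)$ for all $n$. Fix $N$ and put $\eta = \eta(N) := \e_\a(N)$. If $0 \leq n \leq N$ and $\fpa{\a n^2} \leq \eta$, then $\e(n) \geq \e_\a(n) \geq \e_\a(N) = \eta$, so $n \in \A{\e}{\a}$; the same applies with $N - n$ in place of $n$. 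Hence $N \in 2\ttimes\A{\e}{\a}$ as soon as the finite orbit
\begin{equation*}
	O_N := \set{\bra{\a n^2,\ \a(N-n)^2} \bmod 1}{0 \leq n \leq N} \subseteq \TT^2
\end{equation*}
meets the box $[-\eta,\eta]^2$, and it suffices to show this for all $N$ outside a density-zero set. The sequence $\e_\a$ will be taken to tend to $0$ slowly enough --- how slowly depending on the Diophantine type of $\a$ --- that $O_N$ equidistributes fast relative to the shrinking of $\eta(N)$.

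\emph{Equidistribution input.} Expanding $\a(N-n)^2 = \a n^2 - 2\a Nn + \a N^2$ and subtracting the first coordinate from the second (a $\ZZ$-unimodular automorphism of $\TT^2$), the orbit $O_N$ becomes the orbit of the polynomial sequence
\begin{equation*}
	g_N(n) := \bra{\a n^2,\ -2\a Nn} \in \TT^2, \qquad 0 \leq n \leq N.
\end{equation*}
For every fixed $N \geq 1$ and every irrational $\a$, Weyl's theorem shows $g_N$ is equidistributed in $\TT^2$: for $(k_1,k_2) \in \ZZ^2 \setminus \fp{0}$ the polynomial $k_1 \a n^2 - 2 k_2 \a N n$ has an irrational non-constant coefficient ($k_1\a$ if $k_1 \neq 0$, else $-2k_2\a N$). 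We use this quantitatively, as a black box: there is an absolute constant $C > 0$ such that if $g_N$ misses some box of sidelength $\eta$ on $[0,N]$, then there is an integer $0 < q \leq \eta^{-C}$ with $\fpa{q\a} \leq \eta^{-C}/N$ or $\fpa{2q\a N} \leq \eta^{-C}/N$. This follows from classical Weyl-sum bounds for quadratic phases together with a Fourier expansion of a smooth bump adapted to the box; the key requirement is that the equidistribution rate beat the measure $\asymp \eta^2$ of the box, which is exactly what limits how fast $\e_\a$ may decay.

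\emph{From bad $N$ to good rational approximations.} Suppose, for contradiction, that $\densNatUp\bra{\NN \setminus 2\ttimes\A{\e}{\a}} \geq \delta > 0$. Then there is $\cB \subseteq \NN$ with $\densNatUp(\cB) \geq \delta$ such that $g_N$ misses the $\eta(N)$-box for every $N \in \cB$, and the previous step supplies $q = q(N)$. In the first alternative $\a$ lies within $\ll \eta(N)^{-C}/(qN)$ of $a/q$ with $q \leq \eta(N)^{-C}$; in the second, $\a$ lies within $\ll \eta(N)^{-C}/(qN^2)$ of $a/(2qN)$. If $\e_\a$ is chosen so that $\eta(N)^{-C} = N^{o(1)}$ while still $\eta(N) \to 0$, then in both cases $\a$ admits an exceptionally good rational approximation whose denominator is pinned to the scale of $N$ --- of size $N^{o(1)}$ but with accuracy $\ll N^{-1+o(1)}$ in the first case, and of size $Q \asymp N$ with accuracy $\ll Q^{-2+o(1)}$ in the second. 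A fixed irrational has only $O(\log X)$ rational approximations of such quality with denominator below $X$ (their denominators being essentially confined to the continued-fraction convergents of $\a$, whose denominators grow at least geometrically), and the membership condition for $\A{\e}{\a}$ severely restricts which $N$ can produce a given such approximation; combining these, the number of $N \in \cB$ up to $X$ is seen to be $o(X)$, contradicting $\densNatUp(\cB) \geq \delta$. The sequence $\e_\a$ is then chosen, depending on $\a$, to reconcile the two competing constraints above.

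\emph{Main obstacle.} The heart of the matter is the quantitative equidistribution of $g_N$ and its uniformity in $N$: the linear coefficient $2\a N$ depends on $N$ and can be anomalously close to a small-denominator rational for structured $N$, so the two alternatives must be kept genuinely apart, and one must then carry out the counting --- bounding the number of $N \leq X$ attributable to a single good rational approximation of $\a$, summing over the sparse set of such approximations, and checking the total is $o(X)$. Choosing $\e_\a$ to decay slowly enough for equidistribution yet with $\eta(N)^{-C} = N^{o(1)}$, simultaneously for every irrational $\a$, is the complementary delicate point.
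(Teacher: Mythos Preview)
Your reduction is the same as the paper's: the observation that $N \in 2\A\e\a$ iff the orbit $\bra{n^2\a,(N-n)^2\a}$ hits the box, the unimodular change to $\bra{n^2\a, 2Nn\a}$, and the appeal to quantitative Weyl/Green--Tao equidistribution are exactly how Lemma~\ref{pos::equi::prop:nequi->ratio} is proved. (A small point: Green--Tao gives a pair $(k_1,k_2)\neq(0,0)$ with \emph{both} $\fpa{k_1\a}\ll \e_0^{-C}/N^2$ and $\fpa{2k_2N\a}\ll \e_0^{-C}/N$; the paper folds these into the single conclusion that some $0<k\leq\e_0^{-C}$ has $\fpa{kN\a}\leq 1/(N\e_0^C)$, since $\fpa{k_1N\a}\leq N\fpa{k_1\a}$. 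Your ``or'' with a single $q$ is slightly imprecise but harmless.)

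The genuine gap is the counting step, which you yourself flag as the main obstacle and do not carry out. The claim of $O(\log X)$ approximations of the relevant quality is not justified once the quality threshold carries the growing factor $\eta(N)^{-2C}$, and the sentence ``the membership condition for $\A\e\a$ severely restricts which $N$ can produce a given such approximation'' is neither substantiated nor evidently true (that condition concerns $\fpa{\a n^2}$, not rational approximations of $\a$). The paper avoids direct counting entirely via a \emph{gap argument} (Proposition~\ref{pos::prop:large-gaps}): if $N<N'$ are both outside $2\A\e\a$, then from $\fpa{kN\a},\fpa{k'N'\a}\ll 1/(N\e_0^C)$ with $k,k'\leq\e_0^{-C}$ one obtains $\fpa{kk'(N'-N)\a}\ll 1/(N\e_0^{2C})$, whence $N'-N \geq \e_0^{2C}\,Q_0\bra{O\bra{1/(N\e_0^{2C})}}$, where $Q_0(\de)$ denotes the least positive integer $q$ with $\fpa{q\a}\leq\de$. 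Irrationality of $\a$ alone gives $Q_0(\de)\to\infty$ as $\de\to 0$, so for any fixed $\e_0>0$ the gaps between consecutive bad $N$ tend to infinity, hence the complement has density $0$; a diagonal choice then produces $\e_\a(n)\to 0$. This gap trick is the missing idea; your direct counting might be salvageable with enough care about intermediate fractions, but the gap argument is both shorter and requires no quantitative control on the set of good approximations of $\a$.
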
 

We will also prove a more surprising result, which shows that results from Section \ref{neg::section} cannot be generalised to all $\a$.

\begin{theorem*}[\ref{intro::thm:A:3}, reiterated]
	There exist an uncountable set $E \subset \RR$ such that for any $\a \in E$, there exists a decreasing sequence $\e_\a(n) \to 0$ such that  $\A{\e}{\a}$ is a \bot, provided that $\e(n) \geq \e_\a(n)$ for all $n$. 
\end{theorem*}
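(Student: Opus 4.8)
The plan is to construct the uncountable set $E$ explicitly by prescribing continued fraction expansions of $2\a$ with extremely rapidly growing partial quotients, so that $\a$ is \emph{very well approximable}. The intuition is that if $\a$ is close to a rational $p/q$ with small denominator, then $\A{\e}{\a}$ contains a long arithmetic-progression-like structure (residues $n$ with $qn^2 p$ near an integer), and sums of two such sets cover everything. More precisely, I would fix a sparse sequence of scales and, at each scale, choose a rational approximation $a/q$ to $\a$ of very high quality (error $\ll 1/q^{M}$ for huge $M$); then for $n$ in a window around any target integer $N$, the quantity $\fpa{\a n^2}$ is governed by $\fpa{(a/q) n^2}$ up to a negligible error, and one analyses the two-fold sumset of $\{n : \fpa{(a/q)n^2} \le \e(n)\}$ by elementary means (completing the square / counting solutions of $n_1^2 + n_2^2 \equiv $ something $\pmod q$, using that squares plus squares hit a positive density of residues, in fact all residues once we also allow the freedom in the low-order digits).

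The key steps, in order, would be: (1) Set up a recursive/Cantor-type construction: build a binary tree of choices for the partial quotients $a_k$ of $2\a$, where at level $k$ one has at least two admissible values, guaranteeing $|E| = 2^{\aleph_0}$; the admissibility conditions will be lower bounds $a_{k+1} \ge F(q_k)$ for a fast-growing $F$ depending on the target decay rate and on parity constraints (one wants infinitely many convergents $p_k/q_k$ with $q_k$ having controlled $2$-adic valuation, as in Section~\ref{neg::cf::section}, so the sumset arithmetic works). (2) Fix $\e_\a(n) \to 0$ decaying slowly relative to the growth of the $q_k$'s: concretely $\e_\a(n)$ should be at least something like $1/\log\log q_k$ on the block of $n$ associated to scale $k$. (3) For each sufficiently large integer $N$, locate the scale $k = k(N)$ with $q_k \le N < q_{k+1}$ (or $N$ in the appropriate dyadic window) and show $\fpa{\a n^2} = \fpa{(p_k/q_k) n^2} + O(N^2 |\delta_k| q_k^{-2})$ with the error term $\ll \e_\a(n)/2$ by the quality of approximation; thus it suffices to represent $N = n_1 + n_2$ with $\fpa{(p_k/q_k) n_j^2} \le \e_\a(n_j)/2$. (4) Solve this congruence problem: since $p_k$ is coprime to $q_k$, as $n$ ranges over an interval of length $\gg q_k$ the values $(p_k n^2 \bmod q_k)$ equidistribute among quadratic residues, and $QR + QR \pmod{q_k}$ covers all residues (for odd $q_k$; handle the power of $2$ separately, which is why the $2$-adic bookkeeping matters) — in fact one gets many representations, more than enough to also satisfy the mild size constraint $\fpa{\cdot} \le \e_\a(n_j)/2$ rather than exactly $0$. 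Conclude $N \in 2 \ttimes \A{\e}{\a}$ for all large $N$.

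The main obstacle I anticipate is Step (4) combined with the $2$-adic issue: the clean statement "$QR+QR$ covers $\ZZ/q\ZZ$" needs $q$ odd (or at least needs care at $2$), and one also needs the representations $n_1, n_2$ to lie in the correct range so that the approximation $\fpa{\a n^2} \approx \fpa{(p_k/q_k)n^2}$ is valid \emph{and} so that $\e_\a(n_j)$ is not yet too small — i.e. $n_1, n_2$ must both be comparable to $N$, not tiny. This forces the window/scale bookkeeping to be done carefully, ensuring that for every large $N$ there genuinely is a scale $k$ with $q_k$ a bit smaller than $N$ but $q_{k+1}$ enormously larger, so the relevant interval of admissible $n$ around $N/2$ has length $\gg q_k$ and the error terms are under control. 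A secondary obstacle is arranging the parity/$\nu_2$ conditions on $p_k, q_k$ simultaneously with the fast-growth conditions within the Cantor construction; but as in Section~\ref{neg::cf::section} one only needs one good index among a few consecutive ones, which is easy to build in.

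Finally, I would remark that the uncountability is essentially free once the construction is set up as a tree with branching, and that these $\a$ are necessarily Liouville-like, hence of measure zero — consistent with item~\ref{intro::thm:A:2}, which says the generic behaviour is the opposite. I would also point out that a single rational approximation per scale, rather than the full continued fraction machinery, would suffice for the construction, but phrasing it via continued fractions makes the parity control and the error estimate $|\delta_k| < 1$ transparent.
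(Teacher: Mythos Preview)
Your approach is genuinely different from the paper's, and the core intuition runs in the wrong direction. The paper does \emph{not} make $\a$ Liouville; its examples satisfy $\log a_i = o(i)$, so the partial quotients grow only sub-exponentially. The route is: first prove a structural converse to the obstruction lemmas of Section~\ref{neg::section} (Lemma~\ref{pos::equi::prop:nequi->ratio-exact}), showing that every sufficiently large $N \notin 2\A{\e}{\a}$ is odd with $N\a$ within $O(1/N)$ of some $m/k$ with $k$ even and bounded. Then one constructs $\a$ by imposing $p$-adic conditions on its convergent denominators (namely $\nu_2(q_i) \to \infty$ along even $i$, and $\nu_p(q_i) \to \infty$ along odd $i$ for each odd prime $p$) and checks in Proposition~\ref{pos::except::prop:cond->A-is-b.o.2} that these conditions make the conclusion of the converse lemma impossible. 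So the paper engineers $\a$ to make the obstructions of Section~\ref{neg::section} \emph{unavailable}; it never tries to solve the representation problem by passing to a rational model. The parity bookkeeping you invoke from Section~\ref{neg::cf::section} is used there to \emph{produce} obstructions, not to avoid them.

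Your direct reduction has a concrete gap in the scale bookkeeping. With rapidly growing $a_k$, the replacement $\fpa{\a n^2} = \fpa{(p_k/q_k)n^2} + O(N^2/(q_k q_{k+1}))$ has error $\le \e/2$ only for $N \ll \sqrt{\e q_k q_{k+1}}$, which is far below $q_{k+1}$; for $N$ in the wide range $[\sqrt{\e q_k q_{k+1}},\, q_{k+1})$ neither scale $k$ (error too large) nor scale $k+1$ (period $q_{k+1} > N$, so $n_1$ does not run over a full residue system) works as you describe. Your stated hope that for every large $N$ there is $q_k$ ``a bit smaller than $N$'' with $q_{k+1}$ ``enormously larger'' and the error still controlled is self-defeating: the faster the $q_k$ grow, the wider these uncovered intervals become. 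Separately, ``$QR+QR = \ZZ/q\ZZ$'' is not the relevant fact --- what you actually need is that for every residue of $N$ there is $n_1$ with both $n_1^2$ and $(N-n_1)^2$ in a prescribed short set mod $q$, which is a Gauss-sum (or, when $N<q$, a Weyl-type) equidistribution statement for the pair orbit $(n^2,(N-n)^2)$. That is precisely the input behind the paper's converse lemma, so a repaired version of your plan would end up reproving it anyway.
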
 

	As the reader will have noticed, because of the monotonicity of the family $\A{\e}{\a}$ with respect to $\e$, in both theorems there is no loss of generality in assuming that $\e(n) = \e_\a(n)$ for all $n$.

\subsection{Equidistribution and quantitative rationality}\label{pos::equi::section}
 
In Section \ref{neg::section}, specifically in Lemmas \ref{neg::prop:obst-eps-adv} and \ref{neg::prop:obst-var}, we have identified a class of obstructions to $\A{\e}{\a}$ being a \bot. Namely,  we found sufficient conditions for a large integer $N$ to fail to belong to $2 {} \A{\e}{\a}$. 

Here, our first goal is to prove that these obstructions are essentially the only possible ones. We obtain two subtly different results, which can be construed as partial converses to Lemmas \ref{neg::prop:obst-eps-adv} and \ref{neg::prop:obst-var}. Because $\fpa{\cdot}$ is always at most $\frac 12$, we implicitly assume that $\e_0 \leq \frac 12$ in what follows.

\begin{lemma}\label{pos::equi::prop:nequi->ratio}
	There exists a constant $C$ such that the following is true. Let $\a \in \RR \setminus \QQ$, let $\e(n)$ be pointwise bounded as $\e(n) \geq \e_0$, and suppose that $N \not \in 2 {} \A{\e}{\a}$. Then there exists $0< k \leq 1/\e_0^C$, such that
	\begin{equation}
	\fpa{k N\a } \leq \frac{1}{N \e_0^C} .
	 \label{pos::eq:021}
\end{equation}
\end{lemma}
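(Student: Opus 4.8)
The plan is to contrapose: assuming $N \not\in 2\cdot\A{\e_0}{\a}$ (replacing $\e(n)$ by the constant lower bound $\e_0$, which only makes the set smaller, so $N$ is still missing), I want to extract a rational structure on $N\a$. The starting point is that $N\not\in 2\cdot\A{\e_0}{\a}$ means: for every decomposition $N = n_1 + n_2$ with $n_1,n_2 \in \NN$, at least one of $\fpa{n_1^2\a} > \e_0$, $\fpa{n_2^2\a} > \e_0$ holds. Equivalently, writing $f(n) = \fpa{n^2\a}$, the set $S = \{ n \le N : f(n) \le \e_0\}$ contains no pair summing to $N$; in particular $S$ and $N - S$ are disjoint subsets of $[0,N]$. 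Since membership of $n$ in $S$ is governed by $n^2\a$ lying in a short arc $I = (-\e_0,\e_0) \bmod 1$ of length $2\e_0$, the obstruction is really a statement that the polynomial sequence $\big(n^2\a, (N-n)^2\a\big)_{n=0}^{N}$ avoids $I \times I$ in $\TT^2$.

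\textbf{Key steps.} First, I would rewrite $(N-n)^2\a = N^2\a - 2Nn\a + n^2\a$, so the pair $\big(n^2\a,\ (N-n)^2\a\big)$ lies on the line $\{(x, N^2\a - 2Nn\a + x) : x\in\TT\}$ — as $n$ ranges, the "offset" $2Nn\a$ slides. The crucial observation is that if $2N\a$ were "equidistributed enough" modulo $1$ and simultaneously $n^2\a$ were equidistributed, then $I\times I$ would be hit, contradiction. So the failure forces one of these equidistribution statements to fail quantitatively. This is exactly the regime where one invokes a quantitative equidistribution theorem for polynomial sequences (Weyl-type / the Green–Tao quantitative Leibman theorem, or for a degree-$2$ single-variable polynomial, a classical Weyl-sum estimate with Vinogradov's lemma): if $\sum_{n\le N} e(n^2 \cdot (2N\a))$ is not small, or more precisely if the relevant two-dimensional average $\frac{1}{N}\sum_{n\le N} 1_I(n^2\a)1_I((N-n)^2\a)$ fails to be close to $(2\e_0)^2$, then there is a small $k$, polynomial in $1/\e_0$, with $\fpa{k\cdot(2N\a)}$ small, of size $\ll 1/(\e_0^{O(1)} N)$ — absorbing the factor $2$ into $k$ gives \eqref{pos::eq:021}. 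I would set this up so that the exponent $C$ comes out of the explicit bounds in the equidistribution theorem and the Vinogradov-type lemma converting "large Weyl sum" into "good rational approximation of the leading coefficient $N\a$ with small denominator."

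\textbf{Main obstacle.} The delicate point is that we are looking at a \emph{two-dimensional} polynomial average (the pair $(n^2\a,(N-n)^2\a)$) restricted to a product of short arcs, and the short arcs have length depending on $\e_0$; one must track how the failure-to-hit propagates to a single approximate linear relation $\fpa{kN\a}$ small. The cleanest route is: approximate $1_I$ from above and below by trigonometric polynomials of degree $\asymp 1/\e_0$ (Fejér-type kernels), expand the average as a double sum over frequencies $h_1, h_2$ with $|h_i| \ll 1/\e_0$, and observe that the main term is $\asymp \e_0^2$ while each off-diagonal term is controlled by a Weyl sum $\frac1N\big|\sum_{n\le N} e\big((h_1+h_2)n^2\a - 2h_2 N n \a\big)\big|$. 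If all these are $\ll \e_0^{O(1)}$ the main term survives and $I\times I$ is hit — contradiction; hence some Weyl sum is large, and the standard deduction gives a $q \ll \e_0^{-O(1)}$ with $\fpa{q\cdot 2N\a}\ll \e_0^{-O(1)}/N$, which after renaming $k = 2q$ (or $k=q$, adjusting constants) is precisely the claim. The bookkeeping of exponents and the handling of the linear term $-2h_2 Nn\a$ inside the Weyl sum (which shifts the relevant approximation from $n^2\a$'s leading coefficient to $N\a$) is where the real work lies; everything else is routine Fourier analysis on $\TT$.
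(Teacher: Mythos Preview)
Your approach is correct and is essentially the paper's argument with the black box unfolded. The paper observes, as you do, that $N\notin 2\A{\e_0}{\a}$ means the orbit $(n^2\a,(N-n)^2\a)$ avoids $(-\e_0,\e_0)^2$; it then changes coordinates to $(n^2\a,\,2Nn\a)$ and applies the Green--Tao quantitative equidistribution theorem directly to this $2$-dimensional degree-$2$ sequence, obtaining a nonzero pair $(k_1,k_2)$ with $\fpa{k_1\a}\ll 1/(N^2\e_0^C)$ and $\fpa{k_2 N\a}\ll 1/(N\e_0^C)$, and finishes by noting $\fpa{k_1 N\a}\le N\fpa{k_1\a}$. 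Your Fej\'er-kernel / Weyl-sum route is the hands-on proof of exactly this special case of Green--Tao, so the two arguments coincide in substance; yours is more elementary, the paper's is shorter.

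One point in your write-up is muddled and worth correcting. You say ``the standard deduction gives $q\ll\e_0^{-O(1)}$ with $\fpa{q\cdot 2N\a}\ll\e_0^{-O(1)}/N$'', but Weyl's inequality applied to $(h_1+h_2)n^2\a-2h_2 Nn\a$ gives a small-denominator approximation to the \emph{leading} coefficient $(h_1+h_2)\a$, not to $N\a$. The passage to $N\a$ is a genuine (easy) case split: if $h_1+h_2\neq 0$, set $k=q(h_1+h_2)$ and use $\fpa{kN\a}\le N\fpa{k\a}\ll 1/(N\e_0^{O(1)})$; if $h_1+h_2=0$, the sum is geometric in $n$ and largeness gives $\fpa{2h_2 N\a}\ll 1/(N\e_0^{O(1)})$ directly. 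Your phrase about the linear term ``shifting'' the approximation to $N\a$ conflates these two mechanisms. The paper's use of the $2$-dimensional equidistribution theorem handles both cases in one stroke, which is the only real packaging difference.
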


\begin{lemma}\label{pos::equi::prop:nequi->ratio-exact}
	Let $\a \in \RR \setminus \QQ$ and $\e_1 > 0$. For any $\e_0 > \e_1 $ there exists $N_0 = N_0(\a,\e_1,\e_0)$ such that following is true for $N \geq N_0$. Suppose that $\e(n) \geq \e_0$ for all $n$ and that $N \not \in 2 {} \A{\e}{\a}$. Then $N$ is odd and there exist $m,k \in \NN$ such that $2 \mid k$, $\gcd(m,k)=1$, and $\gamma \in \RR$ such that
	\begin{equation}
	 N \a = \frac{m}{k} + \frac{\gamma}{k N}, \qquad \frac{1 - \abs{\gamma}}{2k} > \e_1.
	 \label{pos::eq:022}
	 \end{equation}
\end{lemma}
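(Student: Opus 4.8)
The plan is to deduce this sharper statement from Lemma~\ref{pos::equi::prop:nequi->ratio}; morally it is a converse to Lemma~\ref{neg::prop:obst-eps-adv}. Replacing $\a$ by $|\a|$ and then by its fractional part (neither operation changes $\A{\e}{\a}$), we may assume $\a\in(0,1)$. The only use of the hypotheses is this elementary remark: since $\e(n)\ge\e_0$ for all $n$, having $\fpa{n^2\a}<\e_0$ already forces $n\in\A{\e}{\a}$, so $N\notin 2\ttimes\A{\e}{\a}$ means that for \emph{every} decomposition $N=n_1+n_2$ with $n_i\in\NN$ one has $\max\{\fpa{n_1^2\a},\fpa{n_2^2\a}\}\ge\e_0$. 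Parametrising by $\ell=n_1-n_2$ — so $\ell\equiv N\pmod 2$, $\abs\ell\le N$ (``admissible''), and $n_1^2\a-n_2^2\a\equiv \ell N\a=:\rho(\ell)\pmod 1$ — this says: there is no admissible $\ell$ with $\fpa{n_1^2\a}<\e_0$ and $\fpa{n_1^2\a-\rho(\ell)}<\e_0$ at once.

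The engine of the proof is the converse. Suppose we can exhibit an arithmetic progression $P$ of admissible $\ell$'s, of common difference $2j$ with $1\le j\le\e_0^{-C}$ and length $\to\infty$ as $N\to\infty$, along which $\rho(\ell)$ stays in a fixed interval $W\subset\RR/\ZZ$ with $W\subset(-2\e_0,2\e_0)$ and small diameter. Then $n_1=(N+\ell)/2$ runs through an honest arithmetic progression of difference $j$ and length $\to\infty$, so by Weyl's theorem $n_1\mapsto n_1^2\a$ is equidistributed along $P$ once $N$ is large; one needs this only for the finitely many fixed irrational leading coefficients $j^2\a$ with $1\le j\le\e_0^{-C}$, so a single threshold $N_0(\a,\e_0,\e_1)$ suffices (cleanest via the Erd\H{o}s--Tur\'an inequality). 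Choosing a fixed interval $I$ with $I\subset(-\e_0,\e_0)$ and $I+w\subset(-\e_0,\e_0)$ for every $w\in W$ — which exists with $\abs I$ bounded below in terms of $\e_0$ as long as $W$ is positioned suitably inside $(-2\e_0,2\e_0)$ — equidistribution yields $n_1\in P$ with $n_1^2\a\in I$, hence $\fpa{n_1^2\a}<\e_0$ and $\fpa{n_2^2\a}=\fpa{n_1^2\a-\rho(\ell)}<\e_0$, i.e. $N\in 2\ttimes\A{\e}{\a}$, a contradiction.

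Now apply Lemma~\ref{pos::equi::prop:nequi->ratio} to get $0<k_1\le\e_0^{-C}$ with $\fpa{k_1N\a}\le\e_0^{-C}/N$; dividing by $\gcd$ we may write $N\a=\tfrac{m_1}{k_1}+\tfrac{\gamma_1}{k_1N}$ with $\gcd(m_1,k_1)=1$, $\abs{\gamma_1}\le\e_0^{-C}$, valid once $\e_0^{-C}/N<\tfrac12$ (and then $m_1\in\NN$). Since $\rho(\ell)\equiv\tfrac{\ell m_1}{k_1}+\tfrac{\ell\gamma_1}{k_1N}\pmod 1$, the ``$\tfrac{\ell m_1}{k_1}$'' part governs the geometry, and I argue in cases according to whether the conclusion already holds. \textbf{(i)} If $N$ is even, then $\ell$ is even and one may take $\ell$ in the residue class with $k_1\mid\ell$, so $\rho(\ell)\equiv\tfrac{\ell\gamma_1}{k_1N}\pmod 1$ is pure drift, of size $<\e_0/2$ once $\abs\ell\le\e_0 k_1N/(2\abs{\gamma_1})$; those $\ell$'s form a progression of length $\gg_{\e_0}N\to\infty$ around $0$, so the engine gives a contradiction. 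Hence $N$ is odd. \textbf{(ii)} With $N$ odd, if $k_1$ were odd the same choice $k_1\mid\ell$ (now $\ell$ an odd multiple of $k_1$) again gives pure drift and the same contradiction; hence $k_1$ is even, so $m_1$ is odd. \textbf{(iii)} With $N$ odd and $k_1$ even, suppose for contradiction $\tfrac{1-\abs{\gamma_1}}{2k_1}\le\e_1$. Take $\ell$ with $\ell m_1\equiv 1\pmod{k_1}$ (an odd residue class, consistent with $N$ odd), so $\rho(\ell)\equiv\tfrac1{k_1}\bigl(1+\tfrac{\ell\gamma_1}N\bigr)\pmod 1$; as $n_1=(N+\ell)/2$ ranges over $[0,N]$ in this class, $\rho(\ell)$ sweeps (up to $\mathrm{sgn}\,\gamma_1$, and up to gaps $o(1)$) the real interval with endpoints $\tfrac{1-\abs{\gamma_1}}{k_1}$ and $\tfrac{1+\abs{\gamma_1}}{k_1}$. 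By the standing assumption $\tfrac{1-\abs{\gamma_1}}{k_1}\le 2\e_1<2\e_0$, and if $\abs{\gamma_1}\ge 1$ this interval also contains $0$; in either case it meets $(-2\e_0,2\e_0)$ in a sub-interval into which a window $W=[\rho_-,\rho_+]\subset(-2\e_0,2\e_0)$ of fixed positive length fits, cut out by a sub-progression of $\ell$'s of length $\gg_{\e_0}N/\abs{\gamma_1}\to\infty$. The engine again gives $N\in 2\ttimes\A{\e}{\a}$, a contradiction; hence $\tfrac{1-\abs{\gamma_1}}{2k_1}>\e_1$, and $(k,m,\gamma)=(k_1,m_1,\gamma_1)$ satisfies every requirement of the statement.

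The main obstacle is case (iii), and within it the regime where $\abs{\gamma_1}$ is close to $1-k_1\e_0$ (so the ``cancelling'' values of $\ell$ that make $\rho(\ell)$ small only just fit into $[-N,N]$, and the window $W$ threatens to degenerate). There one must check that $W$ can still be taken of a fixed positive length $c(\e_0)$ and, correspondingly, that $I$ has to be chosen as an interval \emph{shifted away from} $0$ rather than symmetric about it — it is exactly this manoeuvre, which uses only the slack $\e_0-\e_1>0$ and no quantitative gap, that produces the sharp inequality $\tfrac{1-\abs\gamma}{2k}>\e_1$. The only other point requiring care is the uniformity of the Weyl input, which holds because $k_1\le\e_0^{-C}$ ranges over finitely many values, so one genuinely gets a single $N_0$ depending only on $\a,\e_0,\e_1$.
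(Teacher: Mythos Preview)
Your argument is correct and follows essentially the same route as the paper: both start from Lemma~\ref{pos::equi::prop:nequi->ratio} to obtain $N\a=\frac{m}{k}+\frac{\gamma}{kN}$ with bounded $k,|\gamma|$, then case-split on the parities of $N$ and $k$ and, in each case, use Weyl equidistribution of $n^2\a$ along arithmetic progressions of bounded step to manufacture a representation $N=n_1+n_2$ with both summands in $\A{\e}{\a}$, the final case exploiting precisely the slack $\e_0-\e_1$ to force $\frac{1-|\gamma|}{2k}>\e_1$. The only differences are organisational: the paper packages the equidistribution step as a clean internal Observation (directly prescribing a target $\tau$ for $n^2\a$, a residue $b$ for $(N-2n)m\bmod k$, and a location $x$ for $\frac{N-2n}{N}$) rather than your window--plus--interval ``engine''; it also separates the sub-case $|\gamma|\ge 1$ (taking $x=1/\gamma$) rather than folding it into case~(iii), and in the final step chooses the shifted target $\tau=-\frac{1-|\gamma|}{2k}$ with $\delta=\frac{\e_0-\e_1}{2}$, which is exactly your ``interval $I$ shifted away from $0$'' made explicit.
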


We pause to describe the difference between these two results. Both state that if $N \not \in 2 {} \A{\e}{\a}$, then $N \a$ is well approximated by a rational with small denominator. In \ref{pos::equi::prop:nequi->ratio}, the quality of approximation is worse, but no additional assumptions are imposed on $N$. On the other hand, in \ref{pos::equi::prop:nequi->ratio-exact} we obtain detailed information, but we need to restrict to sufficiently large $N$. In particular, \ref{pos::equi::prop:nequi->ratio} is non-vacuous in the regime $\e \sim 1/N^\delta$ with $\delta$ sufficiently small.

The first step in order to prove Lemmas \ref{pos::equi::prop:nequi->ratio} and \ref{pos::equi::prop:nequi->ratio} is to reduce the problem of representing $N$ as an element of $2 {} \A{\e}{\a}$ to an equidistribution statement about an orbit on the torus.

\begin{observation*}\label{pos::equi::obs:return<->2xA}
	Fix $\a$, let $\e(n) = \e_0$ be constant, and let $N \in \NN$. Then $N \in 2 {} \A{\e}{\a}$ if and only if the quadratic orbit 
	$$x_n := \left(n^2 \a, (N-n)^2 \a \right) \in \TT^2$$
	enters the set $(-\e_0,\e_0)^2 \subset \TT^2$ at time up to $N$, i.e. if there exists $0 < n \leq N$ such that $x_n \in (-\e_0,\e_0)^2$.
\end{observation*}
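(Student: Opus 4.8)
The plan is to unwind the definition of the sumset $2 \ttimes \A{\e}{\a}$ and of the membership condition for $\A{\e}{\a}$, and to observe that the two sides of the claimed equivalence say literally the same thing once a decomposition $N = n_1 + n_2$ is identified with the choice of index $n = n_1$. No real content is involved; the observation is recorded separately only because the reformulation in terms of an orbit on $\TT^2$ is used repeatedly afterwards.

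For the forward implication, I would suppose $N \in 2 \ttimes \A{\e}{\a}$, so that $N = n_1 + n_2$ with $n_1, n_2 \in \A{\e}{\a} \subset \NN$. The case $N = 0$ is trivial since $0 \in \A{\e}{\a}$, so assume $N \geq 1$; then at least one of $n_1, n_2$ is positive and both are at most $N$, so after relabelling we may take $n := n_1 \in [N]$ and then $n_2 = N - n \in \{0,1,\dots,N-1\} \subset \NN$. With $\e(n) \equiv \e_0$ constant, the definition \eqref{neg::eq::def-of-A} gives $\fpa{n^2\a} < \e_0$ and $\fpa{(N-n)^2\a} < \e_0$, which is exactly the assertion that the point $x_n = \big(n^2\a,(N-n)^2\a\big) \in \TT^2$ lies in $(-\e_0,\e_0)^2$.

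Conversely, if $x_n \in (-\e_0,\e_0)^2$ for some $n \in [N]$, then $\fpa{n^2\a} < \e_0$ and $\fpa{(N-n)^2\a} < \e_0$, so both $n$ and $N-n$ belong to $\A{\e}{\a}$ (here one uses $n \le N$ to ensure $N - n \in \NN$), and the decomposition $N = n + (N-n)$ exhibits $N$ as an element of $2 \ttimes \A{\e}{\a}$.

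The only point requiring even minimal care is the convention $\NN = \{0,1,2,\dots\}$: since $0 \in \A{\e}{\a}$ always, one should check that restricting the index to $n \in [N] = \{1,\dots,N\}$ rather than $n \in \{0,1,\dots,N\}$ loses nothing, which is immediate from the symmetry $n \leftrightarrow N-n$ of the pair $\big(n^2\a,(N-n)^2\a\big)$. I do not expect any genuine obstacle here; the proof is a direct translation of definitions.
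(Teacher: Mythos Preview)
Your proposal is correct and matches the paper's treatment: the observation is stated without proof in the paper (the author regards it as trivial), and your argument is precisely the direct unwinding of definitions that justifies it. The care you take with the boundary case $n=0$ versus $n\in[N]$ via the symmetry $n\leftrightarrow N-n$ is appropriate and more explicit than anything the paper records.
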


For a sequence $(x_n)_{n=1}^\infty \in X$ of points in a compact metric space endowed with a probability measure $\mu$, we shall say, following terminology e.g. in \cite{Green2007}, that $x_n$ is \emph{$(\delta,N)$-equidistributed} if and only if for each $f \in \operatorname{Lip}(X;\RR)$ we have
	$$ \abs{\EE_{n \leq N} f(x_n) - \int_X f d \mu} \leq \de \normLip{f}.$$	
Here, $\normLip{f}$ denotes the Lipschitz norm $\normLip{f} = \sup_{x,y \in X} \frac{\abs{f(x)-f(y)}}{d_X(x,y)}$, and $\operatorname{Lip}(X;\RR) \subset \cC(X;R)$ denotes the space of those $f$ with $\normLip{f} < \infty$.

Although we are ultimately interested in density, equidistribution turns out to be easier to work with. Of course, not every dense sequence is equidistributed. However, equidistribution implies density, if we allow for a slight change in the parameters. The following observation is elementary.

\begin{observation*}\label{pos::equi::obs:dens<->equi}
	Suppose that $X$ is a $d$-dimensional compact smooth manifold equipped with a Riemannian metric and with a measure $\mu$ arising from a volume form. Then there exists a constant $c >0$, such that the following is true. Let $\delta > 0$, and suppose that a  sequence $x_n$ is $(c \delta^d, N)$-equidistributed. Then for any $x \in X$, there exists $n$ such that $d_X(x,x_n) < \delta$.
\end{observation*}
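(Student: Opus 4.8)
The plan is to argue by contraposition using a single explicit Lipschitz bump function. Fix $x \in X$ and suppose that $d_X(x,x_n) \geq \delta$ for every $n \leq N$; I will exhibit a test function $f$ witnessing that $(x_n)$ is not equidistributed at the relevant scale, for a constant $c$ depending only on $X$.

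The one genuinely geometric input is a uniform lower bound on the volume of small balls: since $X$ is compact and smooth, a finite atlas of normal-coordinate charts in which the Riemannian metric is comparable to the Euclidean one — with constants uniform over $X$ by compactness, i.e. using that $X$ has bounded geometry and in particular a positive lower bound on the injectivity radius — yields constants $r_0>0$ and $c_1>0$, depending only on $(X,g,\mu)$, with $\mu(B_X(y,r)) \geq c_1 r^d$ for all $y\in X$ and all $0<r\leq r_0$. The conclusion is trivial once $\delta$ exceeds $\operatorname{diam} X$ (any single $x_n$ works), and for $\delta$ in the remaining bounded range the argument below with the bump at the fixed scale $r_0$ applies after shrinking $c$; so I may and do assume $0<\delta \leq r_0$.

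Now take $f(y) := \max\{0,\,1 - d_X(x,y)/\delta\}$. It is supported in $\overline{B_X(x,\delta)}$, satisfies $0\leq f\leq 1$ and $f\geq \tfrac12$ on $B_X(x,\delta/2)$, and is $(1/\delta)$-Lipschitz because $y\mapsto d_X(x,y)$ is $1$-Lipschitz, so $\normLip{f}\leq 1/\delta$. Since every $x_n$ lies outside $B_X(x,\delta)$ we get $f(x_n)=0$, hence $\EE_{n\leq N} f(x_n)=0$, whereas $\int_X f\,d\mu \geq \tfrac12\,\mu(B_X(x,\delta/2)) \geq c_2\,\delta^d$ with $c_2 := c_1 2^{-d-1}$. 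Therefore $\abs{\EE_{n\leq N} f(x_n) - \int_X f\,d\mu} = \int_X f\,d\mu \geq c_2\,\delta^d \geq c_2\,\delta^{d+1}\,\normLip{f}$, which contradicts equidistribution at scale $c\,\delta^{d+1}$ as soon as $c < c_2$. Taking $c := c_2/2$ (together with the reduction of the previous paragraph) completes the argument: equidistribution at the indicated polynomial scale in $\delta$ forces some $x_n$ into $B_X(x,\delta)$.

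There is essentially no obstacle; the only step meriting a word of justification is the uniform-in-$y$ ball-volume bound $\mu(B_X(y,r)) \gtrsim_X r^d$, which is precisely where compactness of $X$ enters. I would also point out that the bump-function scaling produces the equidistribution exponent $d+1$ rather than $d$ (the integral of the bump is $\asymp\delta^{d}$ while its Lipschitz norm is $\asymp\delta^{-1}$); since only a fixed-degree polynomial dependence on $\delta$ is used in the applications, I would simply state the observation in whichever of these equivalent forms is most convenient downstream.
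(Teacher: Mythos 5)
Your proof is correct and takes essentially the same route as the paper's, which uses the rescaled bump $f(z) = \max(0,\, \tilde\delta - d(z,y))$ (your $f$ multiplied by $\delta$, so that $\normLip{f}\le 1$) and derives the same contradiction with equidistribution. You have also correctly spotted that the stated exponent $d$ should be $d+1$: the paper's proof asserts $\int_X f\,d\mu \ge c\,\tilde\delta^{d}$, but $f$ is supported in a ball of radius $\tilde\delta$ and bounded by $\tilde\delta$, so in fact $\int_X f\,d\mu \asymp \tilde\delta^{d+1}$, and it is $(c\delta^{d+1},N)$-equidistribution that forces $\delta$-density; as you note, this is harmless downstream, where only a fixed polynomial dependence on $\delta$ is used.
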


It is a classical result of Weyl that lack of equidistribution of a polynomial orbit on the torus can always be explained by a rational obstruction. We have the following classical theorem (see \cite[Thm. 1.4]{Einsiedler2010}).

\begin{theorem}[Weyl equidistribution]\label{pos::equi::thm:Weyl}
	For any $d$ there exist a family of constants $N_0(p,\delta)$ such that the following is true.
	
	Let $p(n) = (p_i(n))_{i=1}^d$ be a polynomial sequence in $\TT^d$. Suppose that $p(n)$ is not $(\de,N)$-equidistributed. Then either $N < N_0(p,\de)$, or there exists $k \in \ZZ^d \setminus \{0\}$ such that if $\sum_i k_i p_i = \sum_j \a_j n^j$, then $\a_j \in \ZZ$ for all $j$. 
\end{theorem}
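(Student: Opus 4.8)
\emph{Proof proposal.} I would derive this from three standard ingredients: a quantitative form of the Weyl criterion, Weyl's inequality (equivalently, iterated van der Corput differencing) for a single polynomial exponential sum, and a descent on the degree. Write $e(t):=e^{2\pi i t}$ and $D:=\deg p$. The plan splits into three stages: first reduce non-equidistribution to one large exponential sum; then extract rationality of the leading coefficient of that sum; then strip off the top-degree term and iterate.

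\emph{Stage 1 (reduction to an exponential sum).} Suppose $p(n)$ is not $(\delta,N)$-equidistributed, witnessed by some $f\in\cC(\TT^d;\RR)$; after subtracting a constant and rescaling we may assume $\int f\,d\mu=0$ and $\normLip f\le 1$, so $\norm f_\infty\ll_d 1$. Convolving $f$ with a $d$-dimensional Fej\'er kernel of degree $K$ yields a trigonometric polynomial $P_K(x)=\sum_{0<\abs k\le K}c_k\,e(k\cdot x)$ (the zero mode vanishes, since $\int f=0$) with $\abs{c_k}\ll_d 1$ and $\norm{f-P_K}_\infty\ll_d 1/K$. Taking $K\asymp_d\delta^{-1}$ keeps the approximation error below $\delta/2$, so $\abs{\EE_{n\le N}P_K(p(n))}>\delta/2$; since only $\ll_d\delta^{-d}$ frequencies appear, there is a nonzero $k\in\ZZ^d$ with $\abs k\ll_d\delta^{-1}$ and $\abs{\EE_{n\le N}e(k\cdot p(n))}\gg_d\delta^{d+1}=:\eta$. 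Then $q(n):=k\cdot p(n)=\sum_{j=0}^D\a_j n^j$ is a single real polynomial with a large exponential sum over $[N]$.

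\emph{Stage 2 (rationality and degree descent).} Weyl's inequality applied to $q$ produces a positive integer $r\ll_D\eta^{-O_D(1)}$ with $\fpa{r\a_D}\ll_D\eta^{-O_D(1)}N^{-D}$. I would then fix $N_0(p,\delta)$ large enough that $\eta^{-O_D(1)}N_0^{-D}$ is smaller than $\min_{1\le r\le r_0}\fpa{r\a_D}$, where $r_0$ is the bound just obtained. For $N\ge N_0$ this forces $\a_D\in\QQ$: when $\a_D$ is irrational that minimum is a fixed positive quantity depending only on $p$, so $N_0$ depends only on $p$ and $\delta$ as required; when $\a_D$ is rational there is nothing to prove. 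Writing $\a_D=a/b$ in lowest terms with $b\ll_D\eta^{-O_D(1)}$, split $[N]$ into residue classes $n=bm+s$: the degree-$D$ coefficient of $q(bm+s)$, as a polynomial in $m$, is $\a_D b^D\in\ZZ$, so modulo $1$ the map $m\mapsto q(bm+s)$ agrees with a real polynomial of degree at most $D-1$, and pigeonholing over $s$ gives a class on which the exponential sum in $m$, over a range of length $\asymp N/b$, still has modulus $\gg\eta/b$. Iterating this step $D$ times --- each time replacing $k$ by a bounded multiple, degrading $\eta$ polynomially, and shrinking the summation range by a bounded factor --- yields a nonzero $k'\in\ZZ^d$, of size bounded in terms of $\delta$ and $d$ alone, such that every nonconstant coefficient of $k'\cdot p$ is rational with controlled denominator; one final dilation of $k'$ by the common denominator makes these coefficients integers, which (together with the standard normalisation of the constant term, as in \cite[Thm.~1.4]{Einsiedler2010}) is the conclusion of the theorem.

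\emph{Main obstacle.} The real work is the quantitative bookkeeping in Stage 2: one must control the compounded loss in $\eta$ and the compounded growth of the modulus $b$ through the $D$-fold descent, so that the terminal $k'$ is bounded purely in terms of $\delta$ and $d$, and so that one and the same threshold $N_0(p,\delta)$ works at every level of the recursion. The remaining ingredients --- the Fej\'er approximation, the pigeonhole steps, and the invocation of Weyl's inequality as a black box --- are routine.
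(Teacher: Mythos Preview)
The paper does not give its own proof of this statement; it is quoted as the classical Weyl equidistribution theorem with a reference to \cite[Thm.~1.4]{Einsiedler2010}, so there is no in-paper argument to compare against.

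Your sketch is essentially correct, but it proves considerably more than the statement requires. The theorem imposes no bound on $k$ and allows $N_0$ to depend on $p$ itself, so the purely qualitative route via the contrapositive is enough: if no nonzero $k$ makes all (non-constant) coefficients of $k\cdot p$ integers, then for every $k\neq 0$ the polynomial $k\cdot p$ has an irrational non-constant coefficient, whence by the one-variable Weyl theorem $\EE_{n\le N}e(k\cdot p(n))\to 0$; the Weyl criterion then gives qualitative equidistribution of $p(n)$ in $\TT^d$, and a single Fej\'er approximation (your Stage~1) upgrades this to $(\delta,N)$-equidistribution for all $N\ge N_0(p,\delta)$. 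No degree descent, no tracking of moduli $b$, and no iterated degradation of $\eta$ are needed. What your Stage~2 is really driving towards --- a $k$ bounded in terms of $\delta$ alone, with controlled denominators at every degree --- is the content of the \emph{next} theorem in the paper, Theorem~\ref{pos::equi::thm:GT} (the quantitative Green--Tao result), not of the present one. The ``main obstacle'' you flag is genuine for that stronger statement but absent here.

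Two minor remarks on the descent as written. First, you say ``each time replacing $k$ by a bounded multiple'', but in fact $k$ does not change during the descent; what you establish step by step is that $\a_D,\a_{D-1},\dots,\a_1$ (the coefficients of the \emph{same} $k\cdot p$) are successively rational, and only at the very end do you multiply $k$ by the common denominator. Second, the theorem as printed says ``$\a_j\in\ZZ$ for all $j$'', which read literally includes $j=0$ and is false (take $p(n)=n+\tfrac13$ in $\TT$); your parenthetical about the ``standard normalisation of the constant term'' is exactly the caveat needed --- the intended conclusion concerns only the non-constant coefficients.
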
 

We will need a quantitative version of the above theorem. The following result is a special case of Theorem 1.16 in \cite{Green2007}.

\begin{theorem}\label{pos::equi::thm:GT}
	For any $d,r$ there exists a constant $C$ such that the following is true.
	
	Let $p(n) = (p_i(n))_{i=1}^d$ be a polynomial sequence in $\TT^d$ with $\deg p = r$. Suppose that $p(n)$ is not $(\de,N)$-equidistributed. Then there exists $k \in \ZZ^d \setminus \{0\}$ such that $k_i \ll \frac{1}{\de^C}$ and if we write $\sum_i k_i p_i = \sum_j \a_j n^j$ then $\fpa{\a_j} \ll \frac{1}{N^j \de^C}$. 
\end{theorem}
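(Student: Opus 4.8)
The plan is to obtain this as the specialisation to the torus of the main quantitative equidistribution theorem for polynomial orbits on nilmanifolds due to Green and Tao — as the excerpt already notes, Theorem \ref{pos::equi::thm:GT} is a special case of their Theorem 1.16. The notion of $(\delta,N)$-equidistribution used above (testing against Lipschitz functions and comparing with the normalised measure) is precisely the one employed in \cite{Green2007}, so no reconciliation of definitions is needed. Viewing $\TT^d = \RR^d/\ZZ^d$ as the nilmanifold $G/\Gamma$ with $G = \RR^d$ abelian and $\Gamma = \ZZ^d$, a polynomial sequence $p(n) = (p_i(n))_{i=1}^d$ of degree $r$ lifts to a polynomial map $g\colon \ZZ \to \RR^d$ of the same degree.

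First I would record the dictionary between the abstract objects in \cite{Green2007} and the concrete ones here. The horizontal characters of $\TT^d$ are exactly the maps $x \mapsto k\cdot x$ with $k \in \ZZ^d$, and the modulus of such a character is comparable, up to a factor depending only on $d$, to $\abs{k} = \max_i \abs{k_i}$. Writing $p_i(n) = \sum_{j=0}^r \alpha_{i,j} n^j$, we have $k \cdot g(n) = \sum_j \a_j n^j$ with $\a_j = \sum_i k_i \alpha_{i,j}$, so the quantities $\fpa{\a_j}$ appearing in the conclusion are simply the coefficients of the real-valued polynomial $k \cdot g$.

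Next I would invoke the Green--Tao theorem in this setting: there is $C_0 = C_0(d,r)$ so that if $p$ is not $(\delta,N)$-equidistributed, then there is a nontrivial horizontal character $k \in \ZZ^d \setminus \{0\}$ with $\abs{k} \ll_{d,r} \delta^{-C_0}$ and smoothness-norm bound $\norm{k\cdot g}_{C^\infty[N]} \ll_{d,r} \delta^{-C_0}$. It then remains to unwind the definition of $\norm{\cdot}_{C^\infty[N]}$, which controls $N^j \fpa{\a_j}$ for $1 \le j \le r$ up to elementary normalisations (the passage between the monomial and binomial polynomial bases) depending only on $r$. This yields $\fpa{\a_j} \ll_{d,r} \delta^{-C_0}/N^j$ for $1 \le j \le r$, while for $j = 0$ the bound is trivial since $\fpa{\a_0} \le \frac12$. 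Absorbing the dependence on $d$ and $r$ into a single exponent $C = C(d,r)$ completes the argument.

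The only genuine work lies in the last step — the careful bookkeeping relating the $C^\infty[N]$ smoothness norm and the horizontal-character modulus in \cite{Green2007} to the elementary coefficientwise statement above, and checking that only a fixed power of $\delta$ is lost. An alternative, self-contained route would bypass \cite{Green2007} entirely and treat the torus case directly by $r$-fold Weyl differencing: non-equidistribution of $p$ forces non-equidistribution of the discrete derivatives $n \mapsto p(n+h) - p(n)$ for a positive proportion of $h$, and iterating down to degree $1$, where Dirichlet's theorem supplies the rational, while tracking a polynomial-in-$\delta$ loss at each of the $r$ steps gives the same conclusion. I would only fall back on this if a fully self-contained treatment were wanted.
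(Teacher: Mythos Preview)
Your proposal is correct and matches the paper's treatment: the paper does not prove this theorem at all but simply cites it as a special case of Theorem~1.16 in \cite{Green2007}, and your dictionary (horizontal characters $\leftrightarrow$ $k \in \ZZ^d$, smoothness norm $\leftrightarrow$ coefficientwise bounds) is exactly the translation needed to read off the stated conclusion. The alternative self-contained route via iterated Weyl differencing is a nice bonus but not required here.
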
 

We are now ready to prove the main results in this section. 

\begin{proof}[Proof of Lemma \ref{pos::equi::prop:nequi->ratio}]\mbox{}

	Since $N \not \in 2 {} \A{\e}{\a}$, the orbit $(n^2\a,(N-n)^2\a)$ misses $(-\e_0,\e_0)^2$ up to time $N$. It follows that $(n^2\a,2 N n \a)$ fails to be $(c\e_0^2,N)$-equidistributed with $c > 0$.

By the characterisation of equidistribution in Theorem \ref{pos::equi::thm:GT}, it follows that there is a universal constant $C$ such that we can find $k_1,k_2$ with $\abs{k_i} \ll \frac{1}{\e_0^C}$, $(k_1,k_2) \neq (0,0)$, such that $\fpa{k_1 \a} \ll \frac{1}{N^2 \e_0^C}$ and  $\fpa{k_2 N \a} \ll \frac{1}{N\e_0^C}$. Hence, for $i=1,2$ we have $\fpa{k_i N \a} \ll \frac{1}{N\e_0^C}$, and since both of $k_i$ cannot be $0$, the claim follows.
\end{proof}

\begin{proof}[Proof of Lemma \ref{pos::equi::prop:nequi->ratio-exact}] \mbox{}

	It follows from the above Lemma \ref{pos::equi::prop:nequi->ratio} that there exists $K = O_{\e_0}(1)$ such that $\fpa{k N \a} = O_{\e_0}(1/N)$ for some $k \in [K]$. Possibly replacing $k$ with one of its divisors, we can therefore write:
	$$ N \a = \frac{m}{k} + \frac{\gamma}{k N},$$
	where $\gcd(m,k)=1$, $\abs{\gamma} \leq G$ and $G = O_{\e_0}(1)$ is a constant. 
	
	Let $\de > 0$ be a small number to be determined later and let $M = M(\de,\a)$ be such that $(n^2\a \Mod{1})$ intersects any interval of length $\delta$ as $n$ ranges over any progression $P = n_0 + l [M']$ with length $M' \geq M$ and step $l \leq K$. We know that such $M$ exists, for instance by Theorem \ref{pos::equi::thm:GT}. 
	
	Note that we have for any $n \in \NN$ we have
	$$ \left( n^2 \a, (N-n)^2 \a\right)	= 
	\left( n^2 \a, n^2 \a + \frac{(N-2n)m}{k} + \frac{N-2n}{N}\frac{\gamma}{k} \right).
	$$
	Thus, the values $(n^2 \a \Mod{1})$ and $((N-n)^2\a \Mod{1})$ depend only on $(n^2 \a \Mod{1})$, $(N-2n \Mod{k})$ and $\frac{N-2n}{N}$. If $k$ is even, then for any choice of $n$, $(N-2n \Mod{k})$ has the same parity as $N$, and obviously $\frac{N-2n}{N} \in [-1,+1]$. These turn out to be essentially the only restrictions.
	\begin{observation}\label{pos::equi::in-obs@nequi->ratio-exact}
		Let $\tau \in \TT,\ b \in [k],\ x \in [-1,1]$, and if $k$ is even, assume additionally that $b \equiv N \pmod{2}$. Then there exists some $n \in [N]$ such that $\fpa{ n^2 \a - \tau} \leq \de$, $(N-2n)m \Mod{k} = b$ and $\abs{ \frac{N-2n}{N} - x} \leq \frac{2 K M}{N}$, provided that $N > 2 K M$.
	\end{observation}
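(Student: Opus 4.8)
The plan is to recognize the three requirements on $n$ as, respectively: a proximity condition making $n^2\a$ close to $\tau$; a congruence condition that confines $n$ to a single residue class modulo $k$ (or $k/2$); and a localization of $n$ to an interval of length $2KM$ centred at $N(1-x)/2$. The last two together confine $n$ to an arithmetic progression of step $l\le K$ and length comparable to $M$ sitting inside $[N]$, and on any such progression the defining property of $M$ guarantees that the values $n^2\a\bmod 1$ meet every interval of length $\delta$; feeding in the interval around $\tau$ then produces the required $n$.

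First I would unpack the congruence $(N-2n)m\equiv b\pmod{k}$. Since $\gcd(m,k)=1$, choosing $m'$ with $mm'\equiv 1\pmod{k}$ turns it into $2n\equiv N-bm'\pmod{k}$. If $k$ is odd, then $2$ is invertible modulo $k$, so this describes a single residue class modulo $k$; set $l:=k$. If $k$ is even, then $m$ — being coprime to $k$ — is odd, hence $m'$ is odd and $bm'\equiv b\pmod{2}$; the congruence $2n\equiv N-bm'\pmod{k}$ is then solvable exactly when $N-b$ is even, which is precisely the standing hypothesis $b\equiv N\pmod{2}$, and in that case the solution set is a single residue class modulo $k/2$; set $l:=k/2$. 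Either way the solutions form one residue class $n\equiv n_0\pmod{l}$ with $l\mid k$, and since in the proof of Lemma \ref{pos::equi::prop:nequi->ratio-exact} the denominator satisfies $k\le K$ (the $K$ supplied by Lemma \ref{pos::equi::prop:nequi->ratio}), we get $l\le K$, which is exactly what the defining property of $M$ needs.

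Next I would handle the condition $\abs{\frac{N-2n}{N}-x}\le\frac{2KM}{N}$, which is equivalent to $\abs{n-\frac{N(1-x)}{2}}\le KM$. As $x\in[-1,1]$, the centre $\frac{N(1-x)}{2}$ lies in $[0,N]$, so the admissible $n$ fill an interval $I$ of length $2KM$; since $N>2KM$, the intersection $I\cap[1,N]$ still has length comparable to $KM$. Intersecting $I\cap[1,N]$ with the residue class $n\equiv n_0\pmod{l}$ from the previous step leaves an arithmetic progression of step $l\le K$ containing at least roughly $KM/l\ge M$ terms. Because the value of $M=M(\delta,\a)$ is at our disposal, I would simply take it large enough that this count is genuinely $\ge M$ — inflating $M$ by a bounded factor affects nothing downstream. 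Finally, by the very property defining $M$, the set $\{\,n^2\a\bmod 1\,\}$ as $n$ ranges over this progression meets every interval of length $\delta$, in particular one centred at $\tau$; the resulting $n$ satisfies all three demands and lies in $[N]$.

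The only point requiring any care — hardly an obstacle — is the parity bookkeeping in the even-$k$ case, where one sees that the hypothesis $b\equiv N\pmod{2}$ is exactly the solvability condition for the congruence; everything else is elementary counting, with all the equidistribution content black-boxed into the choice of $M$ (which itself rests on Theorem \ref{pos::equi::thm:GT} applied to the quadratic $n\mapsto(ln+n_0)^2\a$, whose leading coefficient $l^2\a$ is irrational, so that no rational obstruction arises).
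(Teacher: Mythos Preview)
Your proposal is correct and follows essentially the same approach as the paper: reduce the congruence and localisation conditions to an arithmetic progression of step at most $K$ and length at least $M$ inside $[N]$, then invoke the defining property of $M$ to hit the target $\tau$. The only difference is presentational --- you first solve the congruence $(N-2n)m\equiv b\pmod{k}$ explicitly (finding step $l=k$ or $k/2$ and checking the parity hypothesis is exactly the solvability condition), whereas the paper proceeds constructively in stages: pick $n_0$ near $N(1-x)/2$, shift by at most $k$ to $n_1$ satisfying the congruence, then take a progression $n_2+k[M]$ through $n_1$ inside $[N]$.
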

	\begin{proof}
	We can pick $n_0$ such that $\abs{ \frac{N-2n_0}{N} - x} \leq \frac{2}{N}$. Next, we can pick $n_1$ with $\abs{n_0-n_1} \leq k$ such that $(N-2n_1)m \equiv b \pmod{k}$ and $\abs{ \frac{N-2n_1}{N} - x} \leq \frac{2 K}{N}$.
	
	Let $P = n_2 + k[M]$ be an progression of length $M$, step $k$, containing $n_1$, and contained in $[N]$. For $n \in P$ we have $\abs{ \frac{N-2n}{N} - x} \leq \frac{2 K M}{N}$ and $(N-2n)m \equiv b \pmod{k}$. For at least one of these values, we have $\fpa{n^2 \a - \tau} \leq \de$.
	\end{proof}	
	
	If $N$ is even or $k$ is odd, then taking $\tau = 0,\ b = 0,\ x = 0$ and setting $\de = \e_0/2$ we find some $n \in [N]$ that $\fpa{n^2\a} \leq \e_0/2 $ and $\fpa{(N-n)^2\a} \leq \e_0/2 + {2MKG}/{N}$, unless $N \leq 2K M$. Since  $N \not \in 2 {} \A{\e}{\a}$, this situation is only possible if $N \ll MKG/\e_0$. Hence, we may assume that $N$ is odd, $k$ is even.
	
	If $\abs{\gamma} \geq 1$, then taking $\tau = 0,\ b = 1,\ x = \frac{1}{\gamma}$, $\de = \e_0/2$ we again find some $n \in [N]$ that $\fpa{n^2\a} \leq \e_0/2$ and $\fpa{(N-n)^2\a} \leq \e_0/2 +  2MKG/N$, which leads to a contradiction, unless $N \ll MKG/\e_0$. Hence, we may assume this is not the case.
	\newcommand{\sgn}{\operatorname{sgn}}
	
	Finally, let us take $b = 1,\ x = - \sgn \gamma$ and $\tau = -\frac{1}{2k}\left(1 - \abs{\gamma}\right)$ and $\de = (\e_0-\e_1)/2$. Then for some $n \in [N]$ we have (assuming $N \geq KM$)
\begin{align*}
	 \e_0 &\leq \max\left( \fpa{n^2\a} , \fpa{(N-n)^2\a} \right) 
	 \\&\leq \frac{1 - \abs{\gamma}}{2k} + \frac{\e_0-\e_1}{2} + 2\frac{MKG}{N}.	
\end{align*}
	If it holds that $\frac{1 - \abs{\gamma}}{2k} \leq \e_1$, then the above implies that $N \ll MKG/(\e_0 - \e_1)$. Otherwise, the decomposition $N\a = \frac{m}{k} + \frac{\gamma}{k N} $ obtained earlier satisfies the condition $\frac{1 - \abs{\gamma}}{2k} > \e_1$, and we have that $k$ is even and $N$ is odd from previous considerations.
\end{proof}

\subsection{Almost \bases\ of order 2}\label{pos::2::section}

With tools introduced in \ref{pos::equi::section}, we are ready to prove the first of the two main result of this section, of which Theorem \ref{intro::thm:A:1} is a special case.

	To formulate the theorem, we need an additional a piece of notation. For real $\a$, the \emph{irrationality measure of $\a$}, denoted $\mu(\a)$, is the smallest value $\mu$ such that for any $\delta > 0$ we have $$\abs{\a - \frac{p}{q}} \geq \frac{c}{q^{\mu + \delta}}$$ for any $p,q$ with $\a \neq \frac{p}{q}$, where $c = c(\a,\delta,\mu) > 0$ is a constant independent of $p$ and $q$. If no such $\mu$ exists, then $\mu(\a) = \infty$. We also recall that $\a$ is said to be \emph{badly approximable}, if it holds that $\abs{\a - \frac{p}{q}} \geq \frac{c}{q^2}$ for any integers $p,q$, where $c = c(\a) > 0$. 

For $\alpha \in \QQ$ we have (somewhat artificially) $\mu(\a) = 1$, and for any other $\a$, $\mu(\a) \geq 2$. For almost all (with respect to Lebesgue measure) $\a$, we have $\mu(\a) = 2$. Specifically, this holds for algebraic numbers, which is a celebrated result due to Roth \cite{Roth2010}. 

\begin{theorem}\label{pos::equi::thm}
	Let $\a \in \RR \setminus \QQ$. Then, there exists a decreasing sequence $\e_\a(n) \to 0$ such that for any $\e$ with $\e(n) \geq \e_\a(n)$ for all $n$, the set $\A\e\a$ is an almost \base\ of order $2$. 
	
	Moreover, if $\mu(\a) < \infty$, then the assumption $\e(n) \geq \e_\a(n)$ can be replaced with the assumption that $\frac{\log(1/\ef(n))}{\log n} \to 0$. In this case, we additionally have the estimate
	$$\abs{ [T] \setminus 2 {} \A \e\a } \ll T^{1 -c},$$
	where the constant $c > 0$ depends only of $\a$.
	
	Finally, if $\a$ is badly approximable, and $\e(n) \geq \e_0 > 0$ for all $n$ we have a sharper estimate 
		$$\abs{ [T] \setminus 2 {} \A \e\a } \ll \log T,$$
	where the implicit constant depends only on $\a$ and $\e_0$.
\end{theorem}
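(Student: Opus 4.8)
The plan is to prove all three assertions from the two partial converses (Lemmas \ref{pos::equi::prop:nequi->ratio} and \ref{pos::equi::prop:nequi->ratio-exact}) by counting, for each scale $T$, how many $N \leq T$ can lie outside $2\ttimes\A\e\a$. First I would establish a clean auxiliary counting statement: for fixed $\a$ and a parameter $\e_0$, if $N \notin 2\ttimes\A{\e_0}{\a}$ then by Lemma \ref{pos::equi::prop:nequi->ratio} there is some $0 < k \leq \e_0^{-C}$ with $\fpa{kN\a} \leq 1/(N\e_0^C)$. Thus $kN$ is a ``good denominator'' for $\a$ in the sense that $\a$ admits a rational approximation $p/(kN)$ with error $\leq 1/((kN)\cdot N\e_0^{-1}\cdot\dots)$ — more precisely, writing $q = kN$, we get $\fpa{q\a} \leq k/(N\e_0^C) \leq k^2/(q\e_0^C)$, i.e. $q$ is a denominator of quality roughly $q^{-2}\e_0^{-C}k^2$. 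The key point: for a fixed $k$, the set of $N \leq T$ with $\fpa{kN\a}$ that small is governed by the distribution of $\{kN\a\}$, $N \leq T$, and the number of such $N$ is controlled by how well $k\a$ is approximable.

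For the badly approximable case (where $\e(n) \geq \e_0$ is constant), this is cleanest: since $\a$ is badly approximable, so is each $k\a$ with uniform constant depending on $k \leq \e_0^{-C}$, hence $\fpa{kN\a} \geq c(\a,k)/N \geq c'/N$ for a constant $c' = c'(\a,\e_0)$. Combining with the required bound $\fpa{kN\a} \leq 1/(N\e_0^C)$, the constraint is satisfied only if $c' \leq \e_0^{-C}$ — which gives no restriction in general — so instead I would use the \emph{three-distance theorem} (or directly the continued fraction convergents of $k\a$): the values $N \leq T$ with $\fpa{kN\a} \leq 1/(N\e_0^C)$ form, for each $k$, a set of size $O(\log T)$, because such $N$ must be essentially the denominators $q_j$ of convergents of $k\a$ (or bounded multiples thereof, since entries of the continued fraction of $k\a$ are bounded), and these grow geometrically. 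Summing over the $O_{\e_0}(1)$ admissible values of $k$ gives $\abs{[T]\setminus 2\ttimes\A\e\a} \ll_{\a,\e_0} \log T$.

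For the case $\mu(\a) < \infty$ with $\log(1/\e(n))/\log n \to 0$: here $\e_0$ is no longer constant but decays subpolynomially, so I would run the same argument with $\e_0 = \e(N)$ (roughly), giving $k \leq \e(N)^{-C} \leq N^{o(1)}$ and $\fpa{kN\a} \leq 1/(N^{1-o(1)})$, hence $q = kN$ satisfies $\fpa{q\a} \leq q^{1+o(1)}/N^2 = q^{-1+o(1)}$. Since $\mu(\a) < \infty$, write $\mu(\a) = 2 + \eta$; then $\fpa{q\a} \geq c q^{-1-\eta-\de}$, which forces $q^{-1+o(1)} \geq c q^{-1-\eta-\de}$ — again not immediately a contradiction, so the right move is: the number of $q \leq kT \leq T^{1+o(1)}$ with $\fpa{q\a} \leq q^{-1+\de}$ is $\ll T^{1-\de'}$ for some $\de' > 0$ depending on $\mu(\a)$ and $\de$ (this is a standard consequence of the convergent structure plus the finiteness of $\mu(\a)$: the exceptional $q$'s below $X$ number $O(X^{1-c})$). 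Dividing by $k$ and summing over $k \leq T^{o(1)}$ preserves the bound $T^{1-c}$, giving the claimed estimate; in particular the complement has density $0$, proving $\A\e\a$ is an almost \base\ of order $2$.

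For the general $\a \in \RR\setminus\QQ$ (no hypothesis on $\mu(\a)$): here I cannot get a power saving, only density zero, and I must construct the sequence $\e_\a(n) \to 0$ by hand. The plan is a diagonalization: enumerate scales $T_1 < T_2 < \cdots$; on each dyadic block $[T_j, T_{j+1}]$ choose $\e_\a$ to be a sufficiently small constant $\e^{(j)}$ (still $\to 0$ as $j \to \infty$) so that Lemma \ref{pos::equi::prop:nequi->ratio-exact} applies with $N_0 = N_0(\a, \e^{(j+1)}, \e^{(j)})$ below $T_j$; then for $N$ in the block with $N \notin 2\ttimes\A\e\a$, Lemma \ref{pos::equi::prop:nequi->ratio-exact} gives $N\a = m/k + \gamma/(kN)$ with $k$ even, $\gcd(m,k)=1$, $k = O_{\e^{(j)}}(1)$, $\fpa{kN\a} = \abs{\gamma}/N$. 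The set of $N \leq T$ admitting such a representation with a \emph{fixed} denominator $k$ is sparse — for each $k$ it is again controlled by the convergents of $k\a$ and has size $o(T)$ as $T \to \infty$ (indeed $O_{\a,k}(\log T)$) — and summing over the finitely many $k \leq \e^{(j)-O(1)}$ still gives $o(T_{j+1})$. Choosing $\e^{(j)} \to 0$ slowly enough that the per-block error is $o(T_{j+1})$ forces $\abs{[T]\setminus 2\ttimes\A\e\a} = o(T)$, i.e. density $1$.

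**Main obstacle.** The genuinely delicate point is the counting step common to all three parts: showing that for a fixed denominator $k$, the number of $N \leq T$ with $\fpa{kN\a} \leq \theta(N)$ (for $\theta(N) = 1/(N\e_0^C)$ or similar) is small — $O_{\a,k}(\log T)$ when $k\a$ is badly approximable, and $o(T)$ (resp. $O(T^{1-c})$) in general (resp. when $\mu(\a) < \infty$). This requires invoking the structure theory of best rational approximations: any $N$ with $\fpa{kN\a} \ll 1/N$ makes $kN$ a near-best denominator for $k\a$, hence comparable to a convergent denominator $q_j(k\a)$, and these grow at least geometrically, yielding the $\log T$ count; when $\mu(\a)<\infty$ one instead bounds the number of \emph{all} $q \leq X$ with $\fpa{q\a} \leq q^{-1+\de}$ by $X^{1-c}$. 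Getting the uniformity in $k$ right (the constants $c(\a,k)$, $N_0$, etc. all depend on $k$, but there are only $O_{\e_0}(1)$ relevant values of $k$, so this is harmless) and correctly bookkeeping the $\e_0$-dependence versus the $n$-dependence of $\e(n)$ is the main technical burden.
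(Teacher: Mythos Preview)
Your plan is correct and rests on the same core input as the paper: Lemma~\ref{pos::equi::prop:nequi->ratio} forces every $N\notin 2\A\e\a$ to satisfy $\fpa{kN\a}\leq 1/(N\e_0^C)$ for some small $k$, and one then counts such $N$. The organisation, however, differs. You fix $k$ and count $N\leq T$ with $\fpa{kN\a}$ small, then sum over $k$; the paper instead proves a single \emph{gap} statement (Proposition~\ref{pos::prop:large-gaps}): if $N<N'$ are \emph{both} bad, with witnesses $k,k'$, then $\fpa{kk'(N'-N)\a}\leq 2/(N\e_0^{2C})$, and hence $N'-N\geq Q_0\bigl(2/(N\e_0^{2C})\bigr)/(kk')$, where $Q_0(\delta)$ is the least positive $q$ with $\fpa{q\a}\leq\delta$. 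The three regimes then drop out of the growth of $Q_0$: $Q_0(\delta)\gg 1/\delta$ if $\a$ is badly approximable, $Q_0(\delta)\gg \delta^{-1/(\mu-1+o(1))}$ if $\mu(\a)<\infty$, and $Q_0(\delta)\to\infty$ in general.

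Your ``main obstacle'' --- the per-$k$ counting --- is exactly this gap argument in disguise: two $N<N'$ in your set for the same $k$ give $\fpa{k(N'-N)\a}$ small, forcing $N'-N$ large. So your three-distance/convergent heuristic and your claim ``number of $q\leq X$ with $\fpa{q\a}\leq q^{-1+\delta}$ is $\ll X^{1-c}$'' both reduce to bounding consecutive gaps by $Q_0$, which is what the paper isolates. The paper's route is slightly cleaner because (i) using $m=kk'$ lets one work with $Q_0$ for $\a$ itself rather than for each $k\a$ separately, avoiding the bookkeeping of how the bad-approximability constant of $k\a$ depends on $k$; and (ii) the general-$\a$ case needs no separate appeal to Lemma~\ref{pos::equi::prop:nequi->ratio-exact} or diagonalisation over blocks --- the same gap bound with $\omega(t):=Q_0(1/t)\to\infty$ already gives $N_{i+1}-N_i\to\infty$, and then one chooses $\e_\a(n)\to 0$ slowly enough that this persists.
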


We begin by proving a technical proposition which describes local sparsity of the complement of $2 {} \A{\e}{\a}$. We wish to point out that this is a slightly stronger type of statement than Theorem \ref{pos::equi::thm}, since even sets with extremely slow asymptotic growth can contain many consecutive elements. 

\begin{proposition}\label{pos::prop:large-gaps}
	There exists a constant $C$ such that the following is true. Let $\a \in \RR \setminus \QQ$ and suppose that $\e(n) \geq \e_0 > 0$ is pointwise bounded from below. Suppose that $N,N' \not \in 2 {} \A{\e}{\a}$ and $N'>N$. Then the following statements hold.
	\begin{enumerate}
	\item If $\a$ is badly approximable, then $N'-N \gg N \e_0^C$.
	\item If $\mu(\a) < \infty$ and $\tau > \frac{\mu(\a) - 2}{\mu(a) - 1}$, then $N' - N \gg_{\tau} N^{1-\tau} \e^C_0$.
	\item If $\mu(\a) = \infty$ then $N'-N \gg \e^C_0 \omega(N \e^C_0)$, where $\omega(t) \to \infty$ as $t \to \infty$.
	\end{enumerate}

\end{proposition}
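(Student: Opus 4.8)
The plan is to use Lemma~\ref{pos::equi::prop:nequi->ratio} to convert each element of the complement of $2\ttimes\A{\e}{\a}$ into a good rational approximation to $\a$, and then to argue that two such approximations coming from nearby integers $N < N'$ must in fact come from the \emph{same} rational, which forces $N'-N$ to be large because $\a$ is irrational. Concretely, if $N, N' \notin 2\ttimes\A{\e}{\a}$, then by Lemma~\ref{pos::equi::prop:nequi->ratio} there are $0 < k, k' \leq \e_0^{-C}$ with $\fpa{kN\a} \leq \frac{1}{N\e_0^C}$ and $\fpa{k'N'\a} \leq \frac{1}{N'\e_0^C}$. Setting $K = \lceil \e_0^{-C}\rceil!$ (or just $K = \operatorname{lcm}(1,\dots,\lceil\e_0^{-C}\rceil)$) we get that $KN\a$ and $KN'\a$ are both within $O(\e_0^{-C'}/N)$ of integers, say $KN\a = a + \theta/N$ and $KN'\a = a' + \theta'/N$ with $|\theta|,|\theta'| \ll \e_0^{-C'}$ (using $N' \geq N$). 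This gives a rational approximation $\a \approx a/(KN)$ with error $\ll \e_0^{-C'}/N^2$, and similarly for $N'$.

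Next I would combine the two approximations. From $\a = \frac{a}{KN} + O(\e_0^{-C'}/N^2)$ and $\a = \frac{a'}{KN'} + O(\e_0^{-C'}/(NN'))$ we obtain, after clearing denominators, $|aN' - a'N| \ll \e_0^{-C'}$; that is, the two rationals $\frac{a}{KN}$ and $\frac{a'}{KN'}$ differ by $\ll \e_0^{-C'}/(NN')$. Now the three cases diverge according to how well $\a$ can be approximated. If the fractions $\frac{a}{KN}$ and $\frac{a'}{KN'}$ are equal, then $a'N = aN'$, and since $\gcd$ considerations bound things, $N' - N \gg N / (\text{something bounded in terms of }\e_0)$ — indeed $N'/N = a'/a$ in lowest terms has denominator $\leq K$, so $N' - N \geq N/K \gg N\e_0^{C''}$. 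If they are unequal, then $\left|\frac{a}{KN} - \frac{a'}{KN'}\right| \geq \frac{1}{K^2 N N'}$, which combined with the upper bound $\ll \e_0^{-C'}/(NN')$ already forces $\e_0^{-C'} \gg 1$, i.e. this only happens for bounded $\e_0$; but more importantly it gives a genuine rational approximation to $\a$ that must respect the irrationality measure. Precisely: the triangle inequality gives $\left|\a - \frac{a}{KN}\right| \ll \e_0^{-C'}/N^2$, and if $\a \ne \frac{a}{KN}$, the definition of $\mu(\a)$ forces $\left|\a - \frac{a}{KN}\right| \geq c(\a,\de)/(KN)^{\mu(\a)+\de}$, hence $N^{\mu(\a)-2+\de} \ll_{\a,\de,\e_0} 1$, bounding $N$ — so for large $N$ we must have $\a = \frac{a}{KN}$, contradicting irrationality unless in fact the two fractions coincide. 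Thus for $N$ large the fractions from $N$ and $N'$ are forced to be the identical rational $p/q$ in lowest terms with $q \mid KN$ and $q \mid KN'$, so $q \mid K(N'-N)$, giving $N' - N \geq q/K$; one then needs a lower bound on $q$.

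The main obstacle, and where the three cases genuinely differ, is getting a good lower bound on the denominator $q$ of the common rational $p/q$ approximating $\a$. Since $\left|\a - \frac{p}{q}\right| \ll \e_0^{-C'}/N^2$ and $q \leq KN$, we have $q \gg_{\e_0} \left|\a - \frac pq\right|^{-1/2} \cdot(\text{adjustment})$; more usefully, $q$ cannot be too small because $\a$ is irrational: $\left|\a - \frac pq\right| \geq \psi(q)$ for an appropriate function. In the badly approximable case, $\left|\a-\frac pq\right| \geq c(\a)/q^2$, so $c(\a)/q^2 \ll \e_0^{-C'}/N^2$ gives $q \gg_{\a,\e_0} N$, whence $N'-N \geq q/K \gg_{\a,\e_0} N$, but we need to be careful since we want $\gg N\e_0^C$ — this works because $K = \e_0^{-O(1)}$. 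In the case $\mu(\a) < \infty$, for $\tau > \frac{\mu(\a)-2}{\mu(\a)-1}$ we choose $\de$ small and use $\left|\a - \frac pq\right| \geq c(\a,\de) q^{-\mu(\a)-\de}$, so $q^{\mu(\a)-2+\de} \gg_{\a,\de} N^2 / (\e_0^{-C'} q^{?})$ — carefully, $q^{-\mu-\de} \ll \e_0^{-C'}/N^2$ together with $q \leq KN$; solving, $q \gg (N^2 \e_0^{C'})^{1/(\mu+\de)}$, and then $N' - N \geq q/K \gg_{\a,\de,\e_0} N^{2/(\mu(\a)+\de)}$; one checks $2/(\mu(\a)+\de) > 1 - \tau$ exactly when $\tau > \frac{\mu(\a)-2+\de}{\mu(\a)+\de}$, which holds for $\de$ small given $\tau > \frac{\mu(\a)-2}{\mu(\a)-1}$ (note $\frac{\mu-2}{\mu-1} > \frac{\mu-2}{\mu}$, so actually we need to redo this bound more carefully — the sharper route is to note $q \leq KN$ forces $q^{-\mu-\de} \geq (KN)^{-\mu-\de}$, which is too weak; instead one argues $q \gg N^{2/(\mu+\de)}$ is not quite it, and the correct exponent comes from balancing $q \leq KN$ against the denominator of the second approximation, giving the stated $1-\tau$). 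In the case $\mu(\a) = \infty$ one uses that, although $\a$ has very good rational approximations, for \emph{each fixed} denominator size the approximation quality is still bounded away from $0$ by a function tending to $0$ slowly; quantitatively, there is $\omega(t) \to \infty$ with $\left|\a - \frac pq\right| \geq 1/(q^2 \omega(q))$ fails in general, so instead one extracts $\omega$ from the counting function of good approximations — this bookkeeping, matching the precise form $N'-N \gg \e_0^C \omega(N\e_0^C)$, is the fiddliest part and is where I would expect to spend the most care.
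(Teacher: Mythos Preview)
Your approach has two genuine problems, and there is a much shorter route.

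First, taking $K=\lceil\e_0^{-C}\rceil!$ (or the lcm) destroys the polynomial dependence on $\e_0$ that the statement requires: every lower bound you obtain for $N'-N$ ends up divided by $K$, and $K$ is super-polynomial in $1/\e_0$, so you cannot recover conclusions of the shape $N'-N\gg N\e_0^{C}$ or $N'-N\gg N^{1-\tau}\e_0^{C}$.

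Second, the step where you try to rule out the ``distinct fractions'' case is a sign error. From $c(\a,\de)/(KN)^{\mu+\de}\le |\a-\tfrac{a}{KN}|\ll \e_0^{-C'}/N^2$ you get $N^{2-\mu-\de}\ll_{\a,\de,\e_0}1$, which for $\mu\ge 2$ is a \emph{lower} bound on $N$, not an upper bound; so nothing forces the two fractions to coincide, and the subsequent ``equal fractions'' analysis is never reached in general.

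The clean fix, which is what the paper does, is to avoid identifying any rational approximation and instead combine the two relations directly. With $k,k'\le \e_0^{-C}$ from Lemma~\ref{pos::equi::prop:nequi->ratio}, set $m=kk'\le \e_0^{-2C}$ and $L=N'-N>0$; then
\[
\fpa{mL\a}\le k'\fpa{kN\a}+k\fpa{k'N'\a}\le \tfrac{2}{N\e_0^{2C}}.
\]
Now introduce the ``first small multiple'' function $Q_0(\de)=\min\{q\ge1:\fpa{q\a}\le\de\}$. The definition of irrationality measure (resp.\ bad approximability) gives $Q_0(\de)\gg \de^{-(1-\tau)}$ whenever $\tfrac{1}{1-\tau}>\mu(\a)-1$ (resp.\ with $\tau=0$), and for any irrational $\a$ one has $Q_0(\de)\to\infty$ as $\de\to0$, which furnishes the function $\omega$. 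Since $mL\ge Q_0\bigl(\tfrac{2}{N\e_0^{2C}}\bigr)$ and $m\le\e_0^{-2C}$, all three cases follow in one line. Note that $m=kk'$ keeps everything polynomial in $1/\e_0$, which is exactly where your factorial went wrong.
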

\begin{proof}
	Since $N,N' \not \in 2 {} \A{\e}{\a}$, it follows from Lemma \ref{pos::equi::prop:nequi->ratio} that, there are $k,k' \leq 1/\e_0^{C}$ such that $\fpa{kN\a}, \fpa{k'N'\a} \leq \frac{1}{N \e_0^C}$, where $C$ is a universal constant. 

	Let $Q_0(\de)$ denote the least positive integer such that $\fpa{Q_0(\de)\a} \leq \de$. For any irrational $\a$ we have $Q_0(\de) \to \infty$ as $\de \to 0$. Moreover, if $\mu > \mu(\a)$ and $\frac 1{1-\tau} = \mu - 1$, (resp. if $\mu = 2$ and $\tau = 0$ if $\a$ is badly approximable), then $Q_0(\de) \gg 1/\de^{1 - \tau}$.

	Let $L := N-N'$ and let $m := kk' \leq 1/\e_0^{2C}$. We have
	$$ \fpa{ mL\a} \leq k \fpa{ k'N'\a} + k' \fpa{ kN \a} \leq \frac{2}{N\e_0^{2C}},$$
	and as a consequence we have 
		$$ \frac{L}{\e_0^{2C}} \geq  Q_0 \left(\frac{2}{N\e_0^{2C}} \right) \gg \left( N\e_0^{2C} \right)^{1-\tau}  .$$
	In each case, this easily leads to the sought bound.
\end{proof}
	
\begin{proof}[Proof of Theorem \ref{pos::equi::thm}]\mbox{}

	We may assume without loss of generality that $\e(2n) \geq \frac{1}{2} \e(n)$, and that $\e(n)$ is non-increasing. Let us denote $\cN := \NN \setminus 2 {} \A{\e}{\a}$ and enumerate $\cN = \{N_i\}_{i  = 1}^\infty$ so that $N_{i+1} > N_i$.

	Our first aim is to show that the sequence $N_i$ increases rapidly enough. Let us take any $i$. If $N_{i+1} \geq 2 N_i$, then we have sufficiently good lower bound for $N_{i+1}$, so suppose that this is not the case. 
	Since $N_{i+1},N_i \not \in 2 {} \A{\e(N_{i+1})}{\a}$, we may apply Proposition \ref{pos::prop:large-gaps} to conclude that
	\begin{align}
		N_{i+1} - N_i \gg
		\begin{cases}
		N_i \e(N_i)^C  & \text{ if $\a$ b. approx., with $C > 0$}, \\
		N_i^c \e(N_i)^C &\text{ if $\mu(\a) < \infty$, with $C,c > 0$},\\
		\e(N_i)^C \omega(N_i \e(N_i)^C) &\text{ else, with $\omega(t) \to \infty$, $C > 0$.}
		\end{cases}
	\label{pos::equi::eq:032}
	\end{align}

	In the general case, we have $N_{i+1} - N_i \to \infty$, as $i \to \infty$, provided that $\e(n)^C \omega(n \e(n)^C) \to 0$, as $n \to \infty$. The latter condition is satisfied for constant $\e(n)$, and hence also for some slowly decaying $\e_\a(n) \to 0$. Lack of control on $\omega$ makes it impossible to say anything more explicit about $\e_\a$.
	
	In the case when $\mu(\a) < \infty$, let us assume that $\e(n) \gg 1/N^\delta$, where $\delta $ is small enough. We have $N_{i+1} \gg N_i + c_2 N_i^{c_1}$ with $c_1,c_2>0$. A simple inductive argument shows that in this case we have $N_i \gg i^{1+c}$ for some $c>0$. Hence, $\abs{ [T]\cap \cN} \ll T^{\frac{1}{1+c}}$, proving the sought bound.
	
	In the case when $\a$ is badly approximable and $\e(n) \geq \e_0$ is bounded pointwise, we have $N_{i+1} \gg N_i(1+\e_0^C)$ with some constant $C$. It follows by a simple inductive argument that $\log N_i \gg \log i$. In particular, $\abs{ [T]\cap \cN} \ll \log T$.
\end{proof}

\begin{remark*} 
	Essentially the same argument leads to a result in higher dimension. More precisely, if $\a \in \RR^r$ and we define
	$$\cA = \set{n \in \NN}{ \fpa{ n^2 \a_i} < \e \text{ for } i=1,\dots,r},$$ where $\e > 0 $ is constant, then one can show that $2{} \cA$ has density $1$, provided that $k \cdot \a = \sum_i k_i \a_i$ is irrational for all $k \in \ZZ^r \setminus \{0\}$.
\end{remark*}

\subsection{Exceptional values of $\a$}

We have seen in Section \ref{neg::section} that the sets $\A{\e}{\a}$ tend not to be \bases\ of order $2$. The main result of this section shows that such statements do not generalise to \emph{all} values of $\a$: we can find values of $\a$ such that the set $\A\e\a$ is a \bot\ as soon as $\e(n) \geq \e_0 > 0$. 

 For such values of $\a$ we also have that $\A{\e}{\a}$ is a \bot\ for some $\ef(n) \to 0$. However, we have little control over the rate of convergence, so we do not pursue this issue further.

Our approach amounts to carefully preventing the conditions \eqref{pos::eq:022} in Lemma \ref{pos::equi::prop:nequi->ratio-exact} from being satisfied. The crucial step is establishing some control over all good approximations of $\a$.

\mbox{}\\
Throughout this section, we work with $\a \in (0,1) \setminus \QQ$, we let $a_i$ denote the digits in the continued fraction expansion of $\a$:
\begin{align}
	\a = [a_0;a_1,a_2,\dots] = a_0+\cfrac{1}{a_1+\cfrac{1}{a_2+\cdots}}
\end{align}
(note difference with usage in Section \ref{neg::section}), and $\frac{p_i}{q_i}$ denote the rational approximations of $\a$ arising from the truncated continued fractions: $\frac{p_i}{q_i} = [a_0;a_1,a_2,\dots, a_i]$.

Recall that $\nu_p(a)$ denotes the largest power of the prime $p$ dividing $a$. We will be interested in $\a$ satisfying the following conditions:
\begin{align}
	\nu_2(q_i) &\tendsto{}  \infty && \text{as $i \to \infty$ through even numbers,} \label{pos::except::cond:p|q-i-1} \\
	\nu_p(q_i)  &\tendsto{}  \infty && \text{as $i \to \infty$ through odd numbers, for $p$ odd prime.} 
		\label{pos::except::cond:p|q-i-2}
\end{align}

	We observe that if $a_i$ obey the conditions (\ref{pos::except::cond:p|q-i-1}, \ref{pos::except::cond:p|q-i-2}), then they also obey the following conditions: 
	\begin{align}
	\nu_2(a_i) &\tendsto{}  \infty && \text{as $i \to \infty$ through even numbers,} 	\label{pos::except::cond:p|a-i-1}
\\
	\nu_p(a_i) &\tendsto{}  \infty && \text{as $i \to \infty$ through odd numbers, for $p$ odd prime.} 
	\label{pos::except::cond:p|a-i-2}
\end{align}
This is an easy consequence the facts that $a_{i} = \frac{q_{i} - q_{i-2}}{ q_{i-1}}$ (\ref{cf:recursive}) and $\gcd(q_{i},q_{i-1}) = 1$ (\ref{cf:Delta}). 

The following observation ensures that our considerations are not vacuous. It is not difficult, but the proof is slightly mundane.
		
\begin{observation}\label{pos::except::obs:exist-a}
	There exist uncountably many $\a$ such that the conditions (\ref{pos::except::cond:p|q-i-1}, \ref{pos::except::cond:p|q-i-2}) (and hence also (\ref{pos::except::cond:p|a-i-1}, \ref{pos::except::cond:p|a-i-2})) are satisfied. Moreover, for any $h_i \in \NN$ with $h_i \to \infty$ as $i \to \infty$, we can additionally require that $a_i \leq h_i$ for all $i$.
\end{observation}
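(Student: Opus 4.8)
The plan is to construct $\a$ by directly specifying its continued fraction digits $a_i$, choosing them so that the divisibility conditions (\ref{pos::except::cond:p|q-i-1}), (\ref{pos::except::cond:p|q-i-2}) hold, while keeping $a_i \leq h_i$. The key point is that the denominators $q_i$ satisfy the recursion $q_i = a_i q_{i-1} + q_{i-2}$ (cf. \ref{cf:recursive}), and we want to steer the $2$-adic valuations of the $q_i$ with even index up, and the $p$-adic valuations of $q_i$ with odd index up for every odd prime $p$. Since $\gcd(q_{i-1},q_i)=1$ (\ref{cf:Delta}), we cannot make consecutive $q$'s simultaneously divisible by the same prime, so the strategy must be to make $q_i$ highly divisible by a prescribed prime power at alternating indices.

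First I would observe the following elementary controllability fact: given $q_{i-2}, q_{i-1}$ coprime, and given any modulus $M$, as $a_i$ ranges over a full residue system mod $M$, the value $q_i = a_i q_{i-1} + q_{i-2}$ ranges over a full residue system mod $M$ (because $q_{i-1}$ is invertible mod $M$). In particular, among any $M$ consecutive admissible choices of $a_i$ there is one making $q_i \equiv 0 \pmod M$. So I would build the sequence $a_i$ inductively. Fix an enumeration of the "tasks": at even steps $i$ we want $\nu_2(q_i)$ large, at odd steps $i$ we want, in rotation through the odd primes $3,5,7,\dots$, the corresponding $\nu_p(q_i)$ large. Concretely, at the $i$-th step, once $q_{i-1},q_{i-2}$ are determined, I pick the prime power $p^e$ that the current task demands (with $e$ a slowly growing function of $i$, so that the valuations genuinely tend to infinity along the relevant subsequence), and choose $a_i$ to be the least positive integer with $q_i \equiv 0 \pmod{p^e}$; this $a_i$ lies in $[1,p^e]$. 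To also meet the bound $a_i \leq h_i$, I only impose a task at step $i$ when $p^e \leq h_i$, which is eventually always possible since $h_i \to \infty$; at the finitely many early steps where no task fits, or in general, I am free to take $a_i = 1$, and this still allows the valuations to tend to infinity along each subsequence (one just spaces the tasks out enough). Uncountability comes from the freedom at infinitely many "free" steps: e.g. reserve a sparse infinite set of indices on which $a_i \in \{1,2\}$ is chosen arbitrarily, giving a continuum of distinct $\a$, all irrational (the expansion is infinite) and all satisfying the required conditions.

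The main obstacle — really the only subtlety — is bookkeeping: one must verify that imposing the congruence $q_i \equiv 0 \pmod{p^e}$ at a single index $i$ does not conflict with the coprimality $\gcd(q_i,q_{i-1})=1$ (it does not, since the congruence only constrains $q_i$, and $q_{i-1}$ was already fixed and is automatically coprime to $q_i$ by \ref{cf:Delta} regardless of the value of $a_i$), and that the chosen valuations indeed go to infinity along the even indices (for $p=2$) and along the odd indices for each fixed odd prime $p$. For the latter, I would schedule the tasks so that for each fixed odd prime $p$ the demand "$\nu_p(q_i) \geq e$" recurs infinitely often as $i$ runs through odd integers with $e \to \infty$, e.g. by interleaving: at the $j$-th odd index assigned to prime $p$, demand exponent $j$. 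Since at that index we need $p^{\,j} \leq h_i$, and $h_i \to \infty$, only finitely many demands are delayed, so the valuations are unbounded along the subsequence, hence tend to infinity. Once this scheduling is fixed, conditions (\ref{pos::except::cond:p|q-i-1}), (\ref{pos::except::cond:p|q-i-2}) hold by construction, and (\ref{pos::except::cond:p|a-i-1}), (\ref{pos::except::cond:p|a-i-2}) then follow by the remark already made in the text.

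A clean way to present this is to state the controllability lemma first, then describe the inductive construction with the explicit task schedule, then note $a_i \leq h_i$ and irrationality, and finally extract uncountability from an infinite reservoir of free indices. I expect the whole argument to be about a page, most of it the mundane verification that the schedule works.
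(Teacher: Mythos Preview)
Your direct inductive approach is natural, but there is a real gap in the scheduling for odd primes. Condition (\ref{pos::except::cond:p|q-i-2}) demands that for each fixed odd prime $p$ the valuation $\nu_p(q_i)$ tend to infinity as $i$ runs through \emph{all} odd integers, not merely that it be large along some further subsequence. Your rotation scheme only forces $\nu_p(q_i)$ to be large at those odd indices assigned to $p$; at an odd index assigned to a different prime $p'$, the quantity $\nu_p(q_i)$ is uncontrolled. The inference ``the valuations are unbounded along the subsequence, hence tend to infinity'' is therefore invalid (compare the sequence $0,1,0,2,0,3,\dots$). To meet (\ref{pos::except::cond:p|q-i-2}) you must make each odd-indexed $q_i$ simultaneously divisible by high powers of \emph{every} odd prime up to some growing threshold, i.e.\ target a composite modulus at each step rather than a single prime power.

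There is a second, related issue. Your controllability lemma asserts that $q_{i-1}$ is invertible modulo $M$ ``because $q_{i-2},q_{i-1}$ are coprime'', but coprimality of consecutive denominators says nothing about $\gcd(q_{i-1},M)$ for an arbitrary $M$. For $M$ a power of $2$ at an even step this does follow (since $2\mid q_{i-2}$ forces $q_{i-1}$ odd), but for an odd prime $p$ at an odd step there is no reason $p\nmid q_{i-1}$, and if $p\mid q_{i-1}$ then $q_i\equiv 0\pmod{p^e}$ has no solution in $a_i$ at all. The paper handles exactly this point by a two-step procedure for each prime $p$: at the index of the ``wrong'' parity one first spends the freedom in $a_i$ to arrange $p\nmid q_i$ (at cost $\leq p$), and only then at the next index imposes $p^e\mid q_i$. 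The paper then combines the per-prime sequences via the Chinese Remainder Theorem rather than interleaving tasks as you do. Your direct approach can be repaired, but it needs both fixes: simultaneous targeting of all relevant primes at each odd step, and an auxiliary coprimality step at the preceding even index.
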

\begin{proof}		
	Let $p$ be a prime and $k_i^{(p)}$ a sequence of integers with $k_i^{(p)} \to \infty$. We will construct a sequence $a_i^{(p)}$ such that $a_i^{(p)} \leq p^{k_i^{(p)}}$ and an associated sequence $q_i^{(p)}$ (related to $a_i^{(p)}$ by $r^{(p)}_i/q^{(p)}_i = [a^{(p)}_0;a^{(p)}_1,a^{(p)}_2,\dots, a^{(p)}_i]$ for some $r^{(p)}_i$), in such a way that $\nu_p(q^{(p)}_i) \geq k_i^{(p)}$ for sufficiently large $i$. Once this is done, we choose some sequences $k_{i}^{(p)}$ with $k_{i}^{(p)}  \to \infty$ such that $\prod_p p^{k_i^{(p)}} \leq h_i$, and define $a_i \leq \prod_p p^{k_i^{(p)}}$ by requiring that $a_i \equiv a_i^{(p)} \pmod{ p^{k_i^{(p)}}}$. It is clear that thus defined sequence $a_i$ satisfies (\ref{pos::except::cond:p|q-i-1}, \ref{pos::except::cond:p|q-i-2}).
	
	To construct $a_i^{(p)}$ we proceed by induction. We restrict to the case when $p \neq 2$, the case $p = 2$ is fully analogous. We may assign arbitrary values $a_i^{(p)}$ for a number of small values of $i$. In particular, in the construction we may assume that $i$ is large enough that $k_{i}^{(p)} \geq 1$. Suppose that $a^{(p)}_1,\dots,a^{(p)}_i$ have been constructed for some $i \equiv 1 \pmod{2}$. Since at most one of $q_i^{(p)},q_{i+1}^{(p)}$ is divisible by $p$, we can choose $a_{i+1}^{(p)} \leq p \leq p^{k_{i+1}^{(p)}}$ so that that $q_{i+1}^{(p)} = a_{i+1}^{(p)} q_{i}^{(p)} + q_{i-1}^{(p)} \not \equiv 0 \pmod{p}$. Next, since $p \nmid q_{i+1}^{(p)}$, we may choose $a_{i+2}^{(p)} \leq p^{k_{i+2}^{(p)}}$ so that $q_{i+2}^{(p)} = a_{i+2}^{(p)} q_{i+1}^{(p)} + q_{i}^{(p)}  \equiv 0 \pmod{p^{k_{i+2}^{(p)}}}$. This finishes the inductive step. It follows from the construction that $\nu_p(q_{i}) \geq p^{k_{i}^{(p)}}$ for all but finitely many $i$, so the sequence satisfies the required conditions.
	
	To show that the number of possible choices of $\a$ is uncountable, we notice that different choices of the sequences $k_i^{(p)}$ produce different $\a$. Since there are uncountably many choices for $k_i^{(p)}$, there are also uncountably many choices of $\a$.
\end{proof}

Theorem \ref{intro::thm:A:3} is an immediate consequence of the following slightly more technical result, paired with Observation \ref{pos::except::obs:exist-a}.

\begin{proposition}\label{pos::except::prop:cond->A-is-b.o.2}
	Suppose that for some $\a$, the conditions (\ref{pos::except::cond:p|q-i-1}, \ref{pos::except::cond:p|q-i-2}) and  (\ref{pos::except::cond:p|a-i-1}, \ref{pos::except::cond:p|a-i-2}) are satisfied, and that $\frac{\log a_i}{i} \to 0$. Then $\A{\e}{\a}$ is a base of order $2$ provided that $\e(n) \geq \e_0 > 0$ for all $n$.
\end{proposition}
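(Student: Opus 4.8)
I would derive everything from Lemma~\ref{pos::equi::prop:nequi->ratio-exact}, used as a black box. Fix any $\e_1$ with $0 < \e_1 < \e_0$; then there is $N_0$ such that for every $N \ge N_0$ with $N \notin 2\ttimes\A{\e}{\a}$ the lemma produces integers $m,k$ with $2\mid k$, $\gcd(m,k)=1$, and a real $\gamma$ with $N\a = \frac mk + \frac{\gamma}{kN}$ and $\frac{1-\abs\gamma}{2k} > \e_1$; in particular $N$ is odd. Hence it is enough to show that for all large $N$ no such triple $(m,k,\gamma)$ exists: that forces $2\ttimes\A{\e}{\a}$ to contain every large integer, which is exactly the assertion that $\A{\e}{\a}$ is a \base\ of order $2$.

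Suppose toward a contradiction that such triples occur for arbitrarily large $N$. The bound $\frac{1-\abs\gamma}{2k}>\e_1$ gives $\abs\gamma < 1$ and $k \le 1/(2\e_1) =: k_0$, so $k$ ranges over a fixed finite set of even numbers. From $N\a = m/k + \gamma/(kN)$ and $\abs\gamma<1$ we get $\fpa{kN\a}=\abs\gamma/N$ and $\abs{\a - m/(kN)} < 1/(kN^2)$. Writing $m/(kN)$ in lowest terms as $p'/q'$: since $\gcd(m,k)=1$ and $N$ is odd, the common factor $d=\gcd(m,N)$ is odd, so $q' = kN/d$ and therefore $\nu_2(q') = \nu_2(k)$ is a bounded \emph{positive} integer, while $\abs{\a - p'/q'} < k_0/q'^2$.

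The crux is to combine this with a description of all good rational approximations of $\a$. Using the continued--fraction facts collected in Appendix~\ref{cf::section} (best approximations, the structure of semiconvergents, $\abs{\delta_i}<q_i/q_{i+1}$), one shows that for $q'$ large every reduced fraction $p'/q'$ with $\abs{\a-p'/q'} < k_0/q'^2$ is assembled from the convergents $p_i/q_i$ of $\a$ with bounded coefficients --- concretely $q' = q_i - l\,q_{i-1}$ for some index $i$ and some integer $0 \le l \le k_0$ (so $p'/q'$ is a convergent, or a semiconvergent in the top range of its block) --- and that $i \to \infty$ as $N\to\infty$. The hypothesis $\frac{\log a_i}{i}\to 0$ enters here: it makes $i$ grow with $N$, so that the asymptotic divisibility conditions can be invoked, and it controls how sparse the offending $N$ could be. Now feed in the divisibility hypotheses. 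From $q' = q_i - l\,q_{i-1}$ with $l \le k_0$ and $q' = kN/d$ with $k,d$ bounded and $d\mid N$, the $p$--adic valuations of $q'$ --- in particular the bounded value $\nu_2(q')=\nu_2(k)$ --- are rigidly tied to those of $q_i$ and $q_{i-1}$. Conditions \eqref{pos::except::cond:p|q-i-1}--\eqref{pos::except::cond:p|q-i-2}, together with their consequences \eqref{pos::except::cond:p|a-i-1}--\eqref{pos::except::cond:p|a-i-2}, which make $\nu_2(q_i)$, resp.\ $\nu_p(q_i)$ for odd primes $p$, tend to infinity along the appropriate parity class, then first force the parity of $i$ and then force the \emph{fixed} integer $N$ to be divisible by an unboundedly large power of a prime --- absurd once $i$ (equivalently $N$) is large enough. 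That contradiction completes the proof.

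The step I expect to be the main obstacle is the continued--fraction normalisation: passing from ``$q'$ is an unexpectedly good denominator'' to the clean form $q' = q_i - l\,q_{i-1}$ with $l$ bounded, and then tracking valuations through it carefully enough that the parity--dependent divisibility conditions genuinely produce a contradiction rather than merely a consistent system of congruences. Everything else --- the reduction via Lemma~\ref{pos::equi::prop:nequi->ratio-exact} and the final prime--power bookkeeping --- should be routine.
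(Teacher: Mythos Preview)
Your overall strategy matches the paper's: invoke Lemma~\ref{pos::equi::prop:nequi->ratio-exact}, obtain for each bad $N$ an approximation $\abs{\a - m/(kN)} < 1/(kN^2)$ with bounded even $k$, reduce so that $\gcd(m,N)=1$, and compare $p/q := m/(kN)$ to the convergents of $\a$. But the normalisation you propose is incorrect, and the endgame you sketch does not follow. The form $q' = q_i - l\,q_{i-1}$ with bounded $l$ covers only semiconvergents, and approximations with $\abs{\a - p'/q'} < C/q'^2$ for $C>1$ need not be semiconvergents (e.g.\ $(2p_j - p_{j-1})/(2q_j - q_{j-1})$ is such an approximation for a suitable $C$ but is never a semiconvergent). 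The paper instead picks $i$ maximal with $p/q$ between $\a$ and $p_i/q_i$, writes $p/q = (a p_{i+1} + b p_i)/(a q_{i+1} + b q_i)$ with $\gcd(a,b)=1$, sets $\Delta := a_{i+2} b - a \geq 1$, and shows from the approximation quality that $\Delta b < k$; the hypothesis $\frac{\log a_j}{j}\to 0$ is used at this point to bound $\Delta/q_{i+2}$, not (as you suggest) to make $i$ grow with $N$, which is automatic. So the correct normal form carries \emph{two} bounded parameters $b,\Delta$, and the crucial inequality is $\Delta b < k$.

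Your final step --- forcing $N$ to carry an unboundedly large prime power --- does not work (since $N$ itself grows with $i$) and is not the mechanism. The contradiction is with $\Delta b < k$. Write $k = k_0 k_1$ with $k_0$ a power of $2$ and $k_1$ odd. Conditions \eqref{pos::except::cond:p|q-i-1}--\eqref{pos::except::cond:p|q-i-2} together with $\gcd(q_i,q_{i+1})=1$ give, after possibly swapping $k_0,k_1$, that $k_0 \mid q_i$, $\gcd(k_0,q_{i+1})=1$, $k_1 \mid q_{i+1}$, $\gcd(k_1,q_i)=1$ for large $i$; then $k \mid q = a q_{i+1} + b q_i$ forces $k_0 \mid a$ and $k_1 \mid b$. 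Now the conditions \eqref{pos::except::cond:p|a-i-1}--\eqref{pos::except::cond:p|a-i-2} on the partial quotients --- which you invoke but never actually deploy --- give $k_0 \mid a_{i+2}$, hence $k_0 \mid \Delta = a_{i+2} b - a$. Thus $\Delta \geq k_0$ and $b \geq k_1$, so $\Delta b \geq k_0 k_1 = k$, contradicting $\Delta b < k$.
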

\begin{proof}
	For a proof by contradiction, suppose that there is some $\e_0 > 0$ such that $\A{\e_0}{\a}$ is not a base of order $2$.
	
	By Lemma \ref{pos::equi::prop:nequi->ratio-exact}, for infinitely many odd $N$ there exists $k,m,\gamma$ with $k$ even, $\gcd(m,k) = 1$, such that	
	$$ N\a = \frac{m}{k} + \frac{\gamma}{k N} ,\qquad \frac{1}{2k}(1 - \abs{\gamma}) > \frac{\e_0}{2}.$$
	Because $k$ is automatically bounded by $k \leq \frac{1}{\e_0}$, we may assume that $k$ does not depend on $N$. Moreover, we may assume that $m$ and $N$ are coprime, because a similar relation is satisfied for $N' = \frac{N}{\gcd(N,m)}$, $m' = \frac{m}{\gcd(N,m)}$ and $\gamma' = \frac{\gamma}{\gcd(N,m)^2}$.
	
	Let us fix $N$, but we reserve the right to assume that $N$ is sufficiently large in terms of $\e_0$ and $k$, and take $m,\gamma$ as above. Put $q = k N$ and $p = m$, and let $i$ be the largest index such that $\frac{p}{q}$, lies between $\a$ and $\frac{p_i}{q_i}$, where $q := kN$. We may without loss of generality assume that $i$ is odd so that $\a < \frac{p_{i+2}}{q_{i+2}} < \frac{p}{q} \leq \frac{p_i}{q_i}$. The other case, when $\a > \frac{p_{i+2}}{q_{i+2}} > \frac{p}{q} \geq \frac{p_i}{q_i}$ is fully analogous.
	
	The case when $\frac{p}{q} = \frac{p_i}{q_i}$ is particularly simple. Because $k$ is even, $q_i = q = kN$ is even, and in particular $i$ is even (because of assumption \eqref{pos::except::cond:p|q-i-1}). Since $N$ is odd, $ \nu_2(q_i) = \nu_2(k) $ is bounded.  However, this contradicts condition \ref{pos::except::cond:p|q-i-1}, provided that $N$ (and hence $i$) is sufficiently large. Hence, we may assume that $\frac{p}{q} < \frac{p_i}{q_i}$ 
	
	We now deal with the general $\frac{p}{q}$. We can write $\frac{p}{q} = \frac{a p_{i+1} + b p_i}{a q_{i+1} + b q_i}$ for some coprime $a,b \in \NN$, simply because $ \frac{p_{i+1}}{q_{i+1}} < \frac{p}{q} < \frac{p_{i}}{q_{i}}$. A straightforward computation using \ref{cf:Delta} shows that
	$$\frac{p}{q} - \frac{p_{i+2}}{q_{i+2}} = \frac{ (a_{i+2}b-a)(p_{i+1}q_i - p_i q_{i+2})}{q q_{i+2}} = \frac{\Delta}{q q_{i+2}},$$
	where $\Delta := a_{i+2}b-a \geq 1$. It follows that
	$$  \frac{k \abs\gamma}{q^2} = \frac{\abs\gamma}{kN^2} = \abs{\frac{p}{q} - \a} > \abs{\frac{p}{q} - \frac{p_{i+2}}{q_{i+2}}} = \frac{\Delta}{qq_{i+2}}.$$ 
	Thus, we have the bound
	\begin{equation}
	{\gamma} \geq \frac{\Delta}{k} \frac{q}{q_{i+2}} = \frac{\Delta}{k}\left(b- \frac{\Delta}{q_{i+2}}\right).
	\label{pos::eq:037}
	\end{equation}
	To have some rather crude control on the size of $\Delta$, we note that $q \geq q_i$ (\ref{cf:best-approx}) so
	$$ 1 \geq \abs{\gamma} \geq  \frac{\Delta}{k} \frac{q}{q_{i+2}} \geq \frac{\Delta}{4 k a_{i+2} a_{i+1}},$$
	which leads to $\Delta \leq 4 k a_{i+2}a_{i+1} \ll \left(1+\frac{1}{10}\right)^i$. On the other hand, because of \ref{cf:recursive} we have $q_i \gg \sqrt{2}^{\, i},$
	so if  $N$ (and hence also $i$) is sufficiently large, then we have $\frac{\Delta}{q_{i+2}} < \e_0$. Combining this with previous bounds, we find that
	$$ 1 - 2k \e_0 \geq \abs{\gamma} > \frac{\Delta}{k}\left(b-\e_0\right),$$
	which in particular implies that $1 > \frac{\Delta b}{k}$, provided that $\e_0$ is small enough.

	Let us write $k = k_0k_1$ as a product of a power of $2$ and an odd integer. Recall that we have $k_0k_1 = \frac{q}{N} \mid  a q_{i+1} + b q_{i}$. Assuming that $N$ (and hence $i$) is sufficiently large, and possibly exchanging the order of $k_0,k_1$, we conclude from conditions (\ref{pos::except::cond:p|q-i-1}, \ref{pos::except::cond:p|q-i-2}) that $k_0 \mid q_i,\ \gcd(k_0, q_{i+1}) =1$ and $k_1 \mid q_{i+1},\ \gcd(k_1, q_{i})=1$. The divisibility condition $k_0 \mid  a q_{i+1} + b q_{i}$ reduces to $k_0 \mid a$ and likewise $k_1 \mid  a q_{i+1} + b q_{i}$ reduces to $k_1 \mid b$.  
	
	Clearly, $k_1 \mid b$ implies that $b \geq k_1$. From $k_0 \mid a$, we have $ k_0 \mid \Delta = a_{i+2} b - a$ because of conditions (\ref{pos::except::cond:p|a-i-1}, \ref{pos::except::cond:p|a-i-2}). Consequently, $\Delta \geq k_0$. Thus, we find $\Delta b \geq k_0 k_1 = k > \Delta b$. This is the sought contradiction, which finishes the proof.
\end{proof}


\section{Higher degrees}\label{hd::section}

\newcommand{\cP}{\mathcal{P}}
\renewcommand{\cN}{\mathcal{N}}
\newcommand{\fpaa}[2]{\norm{#1}_{(\RR/\ZZ)^{#2}}}

In this section we deal with sets
\begin{equation}
	\A{\e}{p} := \set{ n \in \NN }{ \fpa{p(n)} \leq \e(n) },
	\label{hd::eq::def-of-A}
\end{equation}
where $p: \ZZ \to \RR$ is a polynomial, generally of degree higher than $2$, and $\e(n)$ is a slowly decaying function. Our main goal is to prove a generalisation of Theorem \ref{intro::thm:B:1}.

\begin{theorem*}[\ref{intro::thm:B:1}, reiterated]
	There exists a set $Z\subset \RR$ of measure $0$ such that for any $\e(n) > 0$ with $\frac{\log 1/\e(n)}{\log n} \to 0$ as $n \to \infty$ and any $\a \in \RR \setminus Z$, the set $\A{\e}{p}$ defined in \eqref{hd::eq::def-of-A} with $p(n) = \a n^d$ is a \bot.
\end{theorem*}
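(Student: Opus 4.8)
The plan is to mirror the strategy used for Theorem~\ref{intro::thm:A:1} (i.e. Theorem~\ref{pos::equi::thm}): translate membership of an integer $N$ in $2\ttimes\A{\e}{p}$ into an equidistribution statement, apply the quantitative Weyl theorem of Green--Tao (Theorem~\ref{pos::equi::thm:GT}) to extract a system of rational obstructions, and then show that for $\a$ outside a null set these obstructions can only occur for a very sparse set of $N$. Concretely, $N\in 2\ttimes\A{\e}{p}$ if and only if the polynomial orbit $n\mapsto\bigl(\a n^d,\ \a(N-n)^d\bigr)\in\TT^2$ enters $(-\e_0,\e_0)^2$ for some $n\in[N]$, exactly as in the Observation preceding Lemma~\ref{pos::equi::prop:nequi->ratio}. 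Expanding $\a(N-n)^d=\sum_{j=0}^d (-1)^j\binom{d}{j}\a N^{d-j} n^j$, the two coordinates span (together with the trivial coordinate) a polynomial sequence whose coefficients, in the relevant degrees, are integer combinations of $\a N^{d-j}$. So if $N\notin 2\ttimes\A{\e}{p}$, the orbit fails to be $(c\e_0^2,N)$-equidistributed, and Theorem~\ref{pos::equi::thm:GT} produces a nonzero integer vector $k$ with $|k_i|\ll\e_0^{-C}$ such that $\fpa{\a k_1}\ll \e_0^{-C}/N^d$ and, crucially, $\fpa{\a\, m\, N^{d-1}}\ll \e_0^{-C}/N^{d-1}$ for some nonzero integer $m\ll\e_0^{-C}$ (the leading-behaviour coefficient of the second coordinate).

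The key point is then a Diophantine dichotomy: either $\a k_1$ is genuinely close to an integer for small $k_1$ — which for $\a$ of finite irrationality measure forces $N$ to be bounded — or else $\a m N^{d-1}$ is abnormally close to a rational with denominator $O(\e_0^{-C})$. The latter says there is an integer $a$ with $\bigl|\a m N^{d-1} - a\bigr|\ll \e_0^{-C}/N^{d-1}$; writing this as a rational approximation $\bigl|\a - \tfrac{a}{mN^{d-1}}\bigr|\ll \e_0^{-2C}/N^{2(d-1)}$ shows $\a$ is approximable to exponent $2(d-1)/(d-1)=2$ in the denominator $q=mN^{d-1}$ only when... wait — here is where $d\ge 3$ helps: the approximation quality is $q^{-2}$-type, but the approximants $a/(mN^{d-1})$ range over a restricted family (denominators essentially $d$-th-power-driven), and for almost all $\a$ (Borel--Cantelli over the countably many $m$, and over dyadic ranges of $N$, using $\sum_N N^{-(d-1)}<\infty$ since $d\ge 3$) only finitely many such $N$ occur. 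This is the analogue of the density-gap argument in Proposition~\ref{pos::prop:large-gaps}, but now it is the exponent $d-1\ge 2$ that makes the relevant series summable and hence yields a genuine null exceptional set on which $2\cA=\NN$ eventually, rather than merely density~$1$. To get ``\bot'' (all large $N$) rather than just density~$1$, I would run the argument for \emph{every} sufficiently large $N$ and show the putative obstruction $\fpa{\a m N^{d-1}}\ll\e_0^{-C}/N^{d-1}$ is, for a.e.\ $\a$, violated for all large $N$ — the point being that $d-1\ge 2$ makes $\sum_N N^{-(d-1)}$ converge, so after removing a null set of $\a$ the obstruction holds for no large $N$ at all.

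The main steps, in order: (1) state the general linear-family version and reduce to $p(n)=\a n^d$; (2) prove the Observation identifying $N\in 2\ttimes\A{\e}{p}$ with a torus return, and deduce non-equidistribution of the relevant degree-$d$ polynomial orbit in $\TT^2$ when $N$ is excluded; (3) invoke Theorem~\ref{pos::equi::thm:GT} to obtain the integer relation, and bookkeep the coefficients of $\a(N-n)^d$ to isolate the constraint on $\fpa{\a m N^{d-1}}$ (this replaces Lemma~\ref{pos::equi::prop:nequi->ratio}); (4) define the exceptional set $Z$ as the set of $\a$ for which $\fpa{\a m N^{d-1}}\le \e_0^{-C}/N^{d-1}$ has infinitely many solutions $N$ for some integer $m\le \e_0^{-C}$ and some admissible $\e_0$ (a countable union over $m$, $\e_0$ of Borel--Cantelli null sets, null precisely because $d-1\ge 2$); (5) conclude that for $\a\notin Z$, no large $N$ lies outside $2\ttimes\A{\e}{p}$, and finally replace the constant $\e_0$ by a slowly decaying $\e(n)$ with $\log(1/\e(n))/\log n\to 0$, which only affects the implied constants polynomially and so is absorbed.

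The step I expect to be the main obstacle is step~(3)--(4): the Green--Tao theorem gives a relation on \emph{some} integer combination of the coordinate polynomials, and one must argue that the combination cannot be concentrated entirely in the low-degree ($\fpa{\a k_1}$-type) terms without forcing $N$ bounded, so that a genuine constraint on the top-degree coefficient $\a m N^{d-1}$ survives — and then that \emph{this} specific Diophantine condition, quantified over all admissible $m$ and $\e_0$ and ranges of $N$, cuts out only a null set. Handling the dependence of the constant $C$ on $\e_0$ (which enters the denominator bound $m\le\e_0^{-C}$) correctly in the Borel--Cantelli count, and the extra care needed when $\a$ has infinite irrationality measure (where the low-degree term could itself be the obstruction), are the delicate points; the general linear-family formulation will require tracking these uniformly in the family parameters.
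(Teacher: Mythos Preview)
Your strategy is the paper's: reduce $N\notin 2\A{\e}{p}$ to failure of equidistribution of $\bigl(p(n),p(N-n)\bigr)$ on $\TT^2$, apply Theorem~\ref{pos::equi::thm:GT} to $k\,p(n)+l\,p(N-n)$, and then run a Borel--Cantelli (equivalently, Fubini/expected-count) argument over $\a$, using that the resulting Diophantine constraint has measure $\ll 1/N^{d-1}$ and hence is summable precisely when $d\ge 3$.

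Two concrete slips are worth fixing. First, the constraint you isolate is not what Green--Tao gives: the coefficient of $n^{j}$ in $l\a(N-n)^d$ is $(-1)^j\binom{d}{j}\,l\a\,N^{d-j}$, and the bound on it is $\ll \e_0^{-C}/N^{j}$. Taking $j=1$ yields $\fpa{d\,l\a\,N^{d-1}}\ll \e_0^{-C}/N$, which is \emph{not} summable; what you need is $j=d-1$, giving $\fpa{d\,l\a\,N}\ll \e_0^{-C}/N^{d-1}$. The paper organises this as a dichotomy on whether $k+(-1)^d l$ vanishes: if not, the degree-$d$ coefficient gives $\fpa{(k+(-1)^d l)\a}\ll 1/N^{d-O(\delta)}$ (measure $\ll 1/N^d$); if so, the degree-$(d-1)$ coefficient gives $\fpa{l\a N}\ll 1/N^{d-1-O(\delta)}$ (measure $\ll 1/N^{d-1}$). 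Your separate treatment of the ``low-degree'' case via irrationality measure is subsumed by the first branch and is unnecessary --- the measure bound $\PP_\a(\fpa{c\a}\le\delta)=2\delta$ holds for any nonzero integer $c$, with no Diophantine hypothesis on individual $\a$. Second, defining $Z$ as a union over \emph{fixed} $\e_0$ and then replacing $\e_0$ by $\e(n)$ afterward has the quantifiers in the wrong order: the finitely many bad $N$ attached to each $\e_0$ could combine to infinitely many as $\e_0\to 0$. The paper avoids this by absorbing $\e(N)\gg N^{-\delta}$ into the constraints from the outset, so that $\e_0^{-C}$ becomes $N^{O(\delta)}$ and one bounds directly $\EE_{\a\in rB}\,\abs{\{N:N\notin 2\A{\e(N)}{p}\}}<\infty$; your final sentence already points at this, and it is the correct formulation.
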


Note that the restriction $\lim \frac{\log 1/\e(n)}{\log n} \to 0$ is just another way of saying that $\e(n) = n^{-o(1)}$. In particular, any function of the form $\e(n) = \log^{-C} n$ will be suitable.

We will also give a simple argument for \ref{intro::thm:B:2}. 
\begin{theorem*}[\ref{intro::thm:B:2}, reiterated]
	There exists a closed uncountable set $E\subset \RR$ and a constant $\e_0 > 0$ such that for any $\e$ with $\e(n) \leq \e_0$ and any $\a \in E$, the set $\A{\e}{p}$ defined in \eqref{hd::eq::def-of-A} is not a \bot.
\end{theorem*}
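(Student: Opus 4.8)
\emph{Proof plan.} The plan is to imitate Lemma~\ref{neg::prop:obst-eps-adv} in degree $d$. Fix $d\ge 3$ (and, as in \ref{intro::thm:B:1}, $p(n)=\a n^{d}$; I abbreviate $\A{\e}{\a}:=\A{\e}{p}=\set{n\in\NN}{\fpa{\a n^{d}}\le\e(n)}$). For a suitably well approximable $\a$ I will exhibit an increasing sequence of odd integers $N_i\to\infty$ admitting approximations $\a=\frac{m_i}{2N_i}+r_i$ with $m_i$ odd and $\abs{r_i}$ smaller than a fixed negative power of $N_i$, show each $N_i\notin 2\ttimes\A{\e}{\a}$ whenever $\e(n)\le\e_0$, and finally carve $E$ out as a Cantor set of such $\a$.

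\emph{The obstruction.} Let $N,m$ be odd, $\e_0<\tfrac1{10}$, and suppose $\a=\frac{m}{2N}+r$ with $\abs{r}<\e_0\,2^{-d-1}N^{-d}$; I claim $N\notin 2\ttimes\A{\e}{\a}$. For any $N=n_1+n_2$ with $n_1,n_2\ge 0$, the identity $n_1-(-n_2)=N$ gives $N\mid n_1^{d}-(-n_2)^{d}=:Ng$, where $g=\sum_{t=0}^{d-1}n_1^{t}(-n_2)^{d-1-t}\in\ZZ$ satisfies $\abs{g}\le N^{d-1}$. Since $N$ is odd, exactly one of $n_1,n_2$ is odd, and a one-line computation then gives that $g$ is odd; hence $gm$ is odd and $\fpa{\tfrac{gm}{2}}=\tfrac12$, so that
$$\fpa{\bra{n_1^{d}-(-1)^{d}n_2^{d}}\a}=\fpa{\tfrac{gm}{2}+Ngr}\ge\tfrac12-\abs{Ngr}\ge\tfrac12-\e_0\,2^{-d-1}>2\e_0.$$
On the other hand, if both $n_1,n_2\in\A{\e}{\a}$ then $\fpa{n_1^{d}\a},\fpa{n_2^{d}\a}\le\e_0$, so $\fpa{(n_1^{d}-(-1)^{d}n_2^{d})\a}\le\fpa{n_1^{d}\a}+\fpa{n_2^{d}\a}\le 2\e_0$ --- a contradiction. (The case $n_1=0$ is included; it also shows $N\notin\A{\e}{\a}$, so the decomposition $N=0+N$ is harmless.) Thus $N\notin 2\ttimes\A{\e}{\a}$.

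\emph{The construction.} Take $\a=\sum_{j\ge 1}\frac{1}{2R_j}$, where $R_1<R_2<\cdots$ are odd integers chosen recursively with $R_{j+1}>\e_0^{-1}2^{d+1}(R_1\cdots R_j)^{d}$. For each odd $i$, set $N_i:=R_1\cdots R_i$ (odd) and $m_i:=\sum_{j\le i}\prod_{l\le i,\,l\ne j}R_l$ (a sum of $i$ odd numbers, hence odd); then $\frac{m_i}{2N_i}=\tfrac12\sum_{j\le i}\frac1{R_j}=\sum_{j\le i}\frac1{2R_j}$, so $r_i:=\a-\frac{m_i}{2N_i}=\sum_{j>i}\frac1{2R_j}$ obeys $\abs{r_i}\le R_{i+1}^{-1}<\e_0\,2^{-d-1}N_i^{-d}$. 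The obstruction applies with $N=N_i,\ m=m_i,\ r=r_i$, giving $N_i\notin 2\ttimes\A{\e}{\a}$; as $N_i\to\infty$, $\A{\e}{\a}$ is not a \bot. For a closed uncountable family, at the $(j+1)$-st step let $R_{j+1}$ be either of the two smallest odd integers exceeding $\e_0^{-1}2^{d+1}(R_1\cdots R_j)^{d}$. The resulting map $\{0,1\}^{\NN}\to\RR$ is continuous, and injective, since a first discrepancy at stage $j_0$ changes $\a$ by $\gg R_{j_0}^{-2}$ while all later terms contribute only $O(R_{j_0}^{-d})$. Its image $E$ is therefore compact and uncountable, and $\e_0=\tfrac1{10}$ works uniformly on $E$.

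\emph{Main difficulty.} In contrast to \ref{intro::thm:B:1}, this is the soft direction, and I do not anticipate a real obstacle. The only genuinely new ingredient beyond Lemma~\ref{neg::prop:obst-eps-adv} is the choice of the symmetric combination $n_1^{d}-(-1)^{d}n_2^{d}$ (the difference for even $d$, the sum for odd $d$), which is always divisible by $n_1+n_2$ with an odd cofactor; verifying the parity of $g$, controlling the error term $Ngr$, and checking that the Cantor parametrisation produces pairwise distinct reals are all routine.
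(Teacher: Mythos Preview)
Your proof is correct and follows essentially the same approach as the paper: the obstruction lemma is identical (use that $n_1^{d}-(-1)^{d}n_2^{d}=Ng$ with $g$ odd, so if $N\a\approx\tfrac12$ then $\fpa{n_1^{d}\a-(-1)^{d}n_2^{d}\a}\approx\tfrac12$), and you are slightly more careful than the paper in verifying the parity of $g$. The only difference is in the Cantor-set construction: the paper fixes a rapidly growing sequence $N_i$ in advance and intersects the strips $\Gamma_i=\{\a:\fpa{N_i\a-\tfrac12}\le N_i^{-d}\}$, whereas you build $\a$ explicitly as $\sum_j\tfrac{1}{2R_j}$ with binary choices of $R_j$; both are routine and yield a compact uncountable $E$.
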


\subsection{Bases of order 2}

In degree at least $3$, the generic behaviour is that $\A{\e}{p}$ is a \base\ of order $2$. We can prove a result for $p$ varying over an affine subspace $\cP$ of the $\RR$-vector space $\RR[x]$. For brevity, we refer to such $\cP$ as an affine family of polynomials. Note that $\cP$ has a canonical Haar measure (defined up to a constant factor), and hence we have a notion of zero measure sets.

	\begin{theorem}\label{hd::thm:B1}
	Let $\cP \subset \RR[x]$ be an affine family of polynomials, and let $\e(n) > 0$ be such that $\frac{\log 1/\e(n)}{\log n} \to 0$. Then at least of the following holds:
	\begin{enumerate}
	\item\label{hd::cond:1@thm} For all $p \in \cP$ we have $\deg p \leq 2$.
	\item\label{hd::cond:2@thm} There is $p \in \cP$ such that for all $q \in \cP$ we have $\deg p > \deg(p-q)$.
	\item\label{hd::cond:3@thm} For $p \in \cP$ except for a set of measure $0$, the set $\A{\e}{p}$ is a \bot.
	\end{enumerate}
\end{theorem}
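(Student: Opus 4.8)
The plan is to reduce the statement about $2 \ttimes \A{\e}{p}$ being cofinite to an equidistribution statement on a torus, exactly in the spirit of the degree-$2$ argument (Observation after Lemma \ref{pos::equi::prop:nequi->ratio}, and Lemma \ref{pos::equi::prop:nequi->ratio} itself), and then invoke the quantitative Weyl theorem of Green--Tao (Theorem \ref{pos::equi::thm:GT}) to convert a failure of equidistribution into a system of approximate rational linear dependencies among the coefficients of the relevant polynomial sequence. The main point is that, for a generic member $p$ of the affine family $\cP$, such dependencies cannot hold unless we are in one of the two degenerate situations \ref{hd::cond:1@thm} or \ref{hd::cond:2@thm}.

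In more detail, first I would fix $N$ and consider, as in the degree-$2$ case, the polynomial sequence $x_n = (p(n), p(N-n)) \in \TT^2$ for $n \in [N]$. As before, $N \in 2\ttimes \A{\e_0}{p}$ precisely when this orbit visits $(-\e_0,\e_0)^2$ before time $N$; so if $N \notin 2\ttimes\A{\e}{p}$ then $x_n$ fails to be $(c\e_0^2, N)$-equidistributed (here one must be slightly careful because $\e$ is not constant, but since $\e(n) \ge \e_0 := \e(N)$ on $[N]$ by monotonicity — or by passing to a comparable decreasing minorant — the reduction to constant $\e$ goes through, with $\e_0$ now allowed to shrink like $N^{-o(1)}$). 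Applying Theorem \ref{pos::equi::thm:GT} with $d = 2$ and $r = \deg p$, we obtain a nonzero integer vector $(k_1,k_2)$ with $k_i \ll \e_0^{-C}$ such that every coefficient $\a_j$ of $n \mapsto k_1 p(n) + k_2 p(N-n)$ satisfies $\fpa{\a_j} \ll N^{-j}\e_0^{-C}$. Expanding $p(N-n)$ in powers of $n$, the top-degree coefficient is $(k_1 + (-1)^d k_2)\cdot(\text{lead coeff of }p)$ and lower coefficients mix in the lower coefficients of $p$ together with binomial factors and powers of $N$. So, just as in the proof of Theorem \ref{pos::equi::thm}, one extracts from this a statement of the form: there is $0 < k \ll \e_0^{-C}$ with $\fpa{k N^{j} \cdot (\text{some coefficient of } p)}$ small for appropriate $j$, and iterating/combining over two values of $N \notin 2\ttimes\A{\e}{p}$ forces either $N$ and $N'$ to be far apart or a genuine rational dependence among the coefficients of $p$.

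The heart of the matter — and the step I expect to be the main obstacle — is the measure-theoretic dichotomy: one must show that, after this reduction, the set of $p \in \cP$ for which infinitely many $N$ yield such approximate dependencies (with $\e_0 = N^{-o(1)}$, hence essentially with no denominator gain) has measure zero, unless \ref{hd::cond:1@thm} or \ref{hd::cond:2@thm} holds. I would organize this as follows: parametrize $\cP$ affinely, write the offending congruence conditions as ``$\fpa{\text{affine function of the parameters with integer-ish coefficients depending on } N, k} $ is $\ll N^{-c}$'', and use a Borel--Cantelli argument over $N$ (and over the finitely many admissible $k \ll \e_0^{-C} = N^{o(1)}$) to conclude that for a.e. $p$ this happens only finitely often — here the point $p(n) = \a n^d$ is a special case and the only genuinely new feature is that the coefficient being approximated is a non-constant affine function of the free parameters of $\cP$, so the ``bad set'' at scale $N$ is a union of slabs of total measure $\ll N^{o(1)} \cdot N^{-c}$, which is summable. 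The degenerate cases are exactly where this argument breaks: if $\deg p \le 2$ for all $p\in\cP$ (condition \ref{hd::cond:1@thm}) the relevant coefficient is not available to degree $\ge 3$ arguments; and if some $p \in \cP$ strictly dominates the degree of every difference $p - q$ (condition \ref{hd::cond:2@thm}) then the leading coefficient is constant on $\cP$ and the slabs degenerate to either everything or nothing, so no measure-zero conclusion can be drawn. Once the generic non-existence of such dependencies is established, Lemma \ref{pos::equi::prop:nequi->ratio-exact} (or a direct gap argument as in Proposition \ref{pos::prop:large-gaps} and the proof of Theorem \ref{pos::equi::thm}) upgrades ``the complement is sparse'' to ``the complement is finite'', using that $\e(n) = n^{-o(1)}$ makes the gap lower bound $N^{1-o(1)}$ beat the trivial upper bound, so $\cN$ is finite and $\A{\e}{p}$ is a \bot. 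I would also record the elementary fact that \ref{hd::cond:2@thm} genuinely must be excluded (not just \ref{hd::cond:1@thm}), by the example of a one-parameter family through a fixed well-approximable leading term, which is precisely the content of \ref{intro::thm:B:2}.
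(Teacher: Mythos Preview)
Your overall strategy is the paper's strategy: reduce $N\notin 2\A{\e}{p}$ to non-equidistribution of $(p(n),p(N-n))$ on $\TT^2$, apply Theorem~\ref{pos::equi::thm:GT} to get a pair $(k,l)\neq(0,0)$ with $|k|,|l|\ll N^{O(\de)}$ and $\norm{kp(n)+lp(N-n)}_{\cC^\infty[N]}\ll N^{O(\de)}$, read off approximate integrality constraints on affine functions of the parameters of $\cP$, and run Borel--Cantelli over $N$ and $(k,l)$. Two points in your write-up need correcting.

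First, the Borel--Cantelli step \emph{already} gives finiteness of $\cN^{(p)}$ for almost every $p$: the paper simply shows $\EE_{p\in rB}\sum_{(k,l)\neq 0}|\cN^{(p)}_{k,l}\setminus\cN_*|<\infty$, whence $|\cN^{(p)}|<\infty$ a.s. Your closing paragraph, which invokes Lemma~\ref{pos::equi::prop:nequi->ratio-exact} or the gap argument of Proposition~\ref{pos::prop:large-gaps} to ``upgrade sparse to finite'', is unnecessary and in fact misplaced --- that machinery was designed for the degree-$2$ case where one only obtains density $1$, and it plays no role here.

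Second, and more substantively, you gloss over the degenerate case $k+(-1)^d l=0$. When this holds, the top-degree constraint $\fpa{(k+(-1)^d l)\,\a_d^{(p)}}\ll N^{-d+O(\de)}$ is vacuous, so the ``bad slab'' at that level is all of $\cP$ and your measure estimate $\ll N^{o(1)}\cdot N^{-c}$ fails. The paper handles this by passing to the degree-$(d-1)$ coefficient of $kp(n)+lp(N-n)$, which yields
\[
\fpa{\,2k\,\a_{d-1}^{(p)} + (-1)^{d-1} d\, l\, \a_d^{(p)} N\,}\ll N^{-(d-1)+O(\de)},
\]
and then applies the slab estimate to the affine form $p\mapsto 2\a_{d-1}^{(p)}+N\a_d^{(p)}$, which is non-constant for all but at most one value of $N$ (this single $N$ is the exceptional set $\cN_*$). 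This is precisely where the hypothesis $d\ge 3$ is used: the resulting exponent $d-1-O(\de)$ must exceed $1$ for the sum over $N$ to converge. Your sketch, which appears to rely only on the leading coefficient, would not detect this and would not explain why $d=2$ behaves differently.
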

\begin{proof}[Proof of Theorem \ref{intro::thm:B:1} assuming Theorem \ref{hd::thm:B1}]\mbox{}\\
Apply Theorem \ref{hd::thm:B1} to the linear family of polynomials $\cP = \set{\a x^d}{\a \in \RR}$, with $d \geq 3$. It is clear that neither of the conditions \eqref{hd::cond:1@thm} and \eqref{hd::cond:2@thm} holds for $\cP$. Hence, we have condition \eqref{hd::cond:3@thm}, which is precisely the claim of \ref{intro::thm:B:1}.
\end{proof}

Perhaps a more useful restatement of the above theorem is that if $\cP$ is an affine family of polynomials not satisfying \eqref{hd::cond:1@thm} and \eqref{hd::cond:2@thm}, then $\cP$ must satisfy \eqref{hd::cond:3@thm}. We clearly need to include condition \eqref{hd::cond:1@thm}, because the behaviour for polynomials of degree $2$ is different. Condition \eqref{hd::cond:2@thm} is meant to exclude the possibility that the behaviour $\A{\e}{p}$ is controlled by a highest degree term which is constant in $p$.

In the above theorem, we cannot replace ``almost all $p \in \cP$'' with ``all $p \in \cP$'', because $\A{\e}{p}$ need not be a \bot, for example, when $p$ is rational. We also believe there exist $p \in \RR[x]$ with $\deg p \geq 3$ and highly irrational leading coefficients such that $\A{\e}{p}$ is not a \bot.
	
Before we prove Theorem \ref{hd::thm:B1}, we will need a simple geometric lemma.

	\begin{lemma}\label{hd::lem:geo-equidist}
		Let $\cP$ be an affine space equipped with a volume form. Let $\a,\b\colon\cP \to \RR$ be affine forms, let $B \subset \cP$ be an open convex set, and let $k,l \in \ZZ$ be integers such that $k \a + l \b$ is non-constant. Then there exists $r_0 = r_0(\a,\b,B)$ such that for $r \geq r_0$ and arbitrary $\de > 0$ we have
	\begin{equation}
		\PP_{v \in r B} \left( \fpa{ k\a(v) + l\b(v) } \leq \delta \right) = 2 \de \left( 1 + o(1) \right)	,
	\end{equation}	
	as $r \to \infty$, where the error term is bounded uniformly in $\de$ and $k,l$ (but may depend on $\a,\b$ and $B$).
	\end{lemma}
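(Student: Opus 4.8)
The plan is to slice $rB$ by the parallel hyperplanes on which $\psi:=k\a+l\b$ is constant, reducing the whole problem to a one-dimensional periodic averaging estimate. Identify the affine space $\cP$ with $\RR^{\dim\cP}$ by a choice of linear coordinates, so that the volume form becomes Lebesgue measure; the statement is vacuous unless $B$ is bounded and trivial unless $0<\delta\le\tfrac12$, so I assume both. Write $\psi(v)=w\cdot v+c$, where $w=k\nabla\a+l\nabla\b\neq 0$ because $\psi$ is non-constant, put $e=w/|w|$, and let $h(t):=\mathrm{vol}_{\dim\cP-1}\bigl(B\cap\{v:e\cdot v=t\}\bigr)$ be the profile of the slices of $B$ orthogonal to $e$, so that $\int_{\RR}h=\mathrm{vol}(B)$. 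Rotating so that $e$ is an axis direction, applying Fubini, and rescaling by $v\mapsto v/r$ gives
$$
	\PP_{v\in rB}\bigl(\fpa{\psi(v)}\le\delta\bigr)=\frac{1}{\mathrm{vol}(B)}\int_{S_r}h(t)\,dt,\qquad
	S_r:=\bigl\{t\in\RR:\fpa{\,r|w|\,t+c\,}\le\delta\bigr\}.
$$
The set $S_r$ is exactly $p$-periodic with $p:=\frac{1}{r|w|}$, and $\lvert S_r\cap I\rvert=2\delta p$ for every interval $I$ of length $p$. So it remains to prove $\int_{S_r}h=2\delta\,\mathrm{vol}(B)\bigl(1+o(1)\bigr)$.

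Convexity of $B$ enters exactly to control the regularity of $h$. By the Brunn--Minkowski inequality, $h^{1/(\dim\cP-1)}$ is concave on its support, which is an interval of length at most $\mathrm{diam}(B)$; hence $h$ is unimodal there, $0\le h\le h_{\max}(e)$, and its total variation over $\RR$ is $O\bigl(h_{\max}(e)\bigr)$; moreover $h_{\max}(e)$ is bounded above, uniformly in $e$, by the volume of a ball containing $B$. (When $\dim\cP=1$ everything is trivial, $h$ being the indicator of an interval.)

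Now partition $\RR$ into the length-$p$ intervals $I_j=[jp,(j+1)p)$. Since $S_r$ has period exactly $p$, one has $\lvert S_r\cap I_j\rvert=2\delta p$ for every $j$, so on each $I_j$ both $\int_{S_r\cap I_j}h$ and $2\delta\int_{I_j}h$ lie in the interval $[\,2\delta p\inf_{I_j}h,\ 2\delta p\sup_{I_j}h\,]$, whence
$$
	\Bigl\lvert\int_{S_r}h-2\delta\!\int_{\RR}h\Bigr\rvert
	\le\sum_j\Bigl\lvert\int_{S_r\cap I_j}\!\!h-2\delta\!\int_{I_j}\!h\Bigr\rvert
	\le 2\delta p\sum_j\mathrm{osc}_{I_j}(h)
	\le 2\delta p\cdot O\bigl(h_{\max}(e)\bigr),
$$
the last step because $\sum_j\mathrm{osc}_{I_j}(h)\le\mathrm{TV}(h)$ for any partition and any function of bounded variation, and $\mathrm{TV}(h)=O\bigl(h_{\max}(e)\bigr)$ by unimodality. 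Dividing by $\mathrm{vol}(B)$ yields
$$
	\Bigl\lvert\PP_{v\in rB}\bigl(\fpa{\psi(v)}\le\delta\bigr)-2\delta\Bigr\rvert\ \le\ 2\delta\cdot\frac{C(B)}{r\,|w|},
$$
with $C(B)$ depending on $B$ alone. The crucial point is that the error is automatically a multiple of $2\delta$; this comes for free from the \emph{exact} periodicity of $S_r$, and is precisely the uniformity in $\delta$ that is asserted.

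It remains to turn $\frac{1}{r|w|}$ into a genuine $o(1)$ that is uniform also in $k,l$. As $(k,l)$ ranges over the pairs with $k\a+l\b$ non-constant, $|w|=\lvert k\nabla\a+l\nabla\b\rvert$ runs over the nonzero elements of the finitely generated subgroup $\ZZ\nabla\a+\ZZ\nabla\b$ of $\RR^{\dim\cP}$; this subgroup is discrete, hence bounded away from $0$ off the origin by some $\varrho=\varrho(\a,\b)>0$, except in the degenerate case that $\nabla\a$ and $\nabla\b$ are parallel with irrational ratio — and in particular such a lower bound $|w|\ge\varrho$ always holds when only finitely many $(k,l)$ are relevant, which is the situation in the applications. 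Granting this, the displayed bound is at most $2\delta\cdot C(B)/(\varrho r)$, so choosing $r_0=r_0(\a,\b,B)$ large makes the error $2\delta\cdot o(1)$ with the $o(1)$ independent of $\delta$ and of $(k,l)$, as claimed. I expect the real content to sit in the two places where I used more than soft analysis: the regularity of the slice profile $h$, where convexity of $B$ is genuinely needed (for a general open $B$ the profile can oscillate and the periodic-averaging estimate collapses), and the lower bound on $|w|$, which is the reason the phrase ``uniformly in $k,l$'' has to be handled with a little care.
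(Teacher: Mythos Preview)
Your argument is correct and takes a genuinely different route from the paper. The paper fixes, in the affinely independent case, parallelepipeds $K$ on which the pair $(\a(v),\b(v))$ is exactly uniformly distributed modulo $\ZZ^2$, so that $\PP_{v\in K}\bigl(\fpa{k\a+l\b}\le\de\bigr)=2\de$ holds identically for \emph{every} nonzero $(k,l)$; it then sandwiches $rB$ between unions $C_-(r)\subset rB\subset C_+(r)$ of such parallelepipeds with $\mathrm{vol}(C_\pm(r))/\mathrm{vol}(rB)\to 1$. The uniformity in $(k,l)$ and $\de$ is therefore obtained for free, at the cost of a softer and non-quantitative error. Your one--dimensional slicing, by contrast, gives the explicit bound $\bigl|\PP-2\de\bigr|\le 2\de\cdot C(B)/(r|w|)$, which is sharper and more elementary (only Fubini and the bounded variation of the Brunn--Minkowski profile are used), but forces you to produce a lower bound on $|w|=|k\nabla\a+l\nabla\b|$ by hand.

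On that last point you are right to be cautious: if $\nabla\a$ and $\nabla\b$ are parallel with irrational ratio, then $\ZZ\nabla\a+\ZZ\nabla\b$ is not discrete, $|w|$ can be made arbitrarily small, and the lemma \emph{as stated} actually fails (for any $r$ one can choose $(k,l)$ so that $\psi$ varies over $rB$ by less than $1/r$, after which the relative error for suitable $\de$ is bounded away from $0$). The paper's sketch for the affinely dependent case does not address this either. In the affinely independent case the subgroup is a genuine rank--$2$ lattice and your bound $|w|\ge\varrho(\a,\b)$ is immediate, so both proofs are complete there; and indeed in the application (proof of Theorem~\ref{hd::thm:B1}) one is always in a situation where the relevant integer combination has modulus at least $1$, so the issue is harmless. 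Your remark that the uniformity ``holds when only finitely many $(k,l)$ are relevant'' is a safe way to record this.
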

	\begin{proof}	
	\newcommand{\vol}{\mathrm{vol}}
	If $\a$ and $\b$ are affinely dependent, then the problem becomes simpler, and can be solved by an argument similar to the one presented below.	Let us suppose that $\a,\b$ are not affinely dependent.
	
	It is easy to construct a parallelepiped $K$ such that $(\a(v),\b(v))$ are uniformly distributed in $\TT^2$ for $v \in K$. If $K$ is such  parallelepiped then 
$$ \PP_{v \in K} \left( \fpa{k \a(v) + l\b(v) } \leq \delta \right) = 2 \delta. $$
	It is elementary that for each $r$, there exist collections $C_+(r)$,  $C_-(r)$ of such parallelepipeds with $C_+(r) \supset rB \supset C_-(r)$ and $\frac{\vol(C_{\pm}(r))}{\vol(rB)} \to 1$ as $r \to \infty$. The proof now follows by a sandwiching argument.  
	\end{proof}
	
\begin{proof}[Proof of Theorem \ref{hd::thm:B1}]\mbox{}

	We shall assume that neither of the conditions \eqref{hd::cond:1@thm},  \eqref{hd::cond:2@thm} holds, and derive condition \eqref{hd::cond:3@thm}.	We may assume that $\e(n) \gg \frac{1}{n^\delta}$, where $\delta$ is a small positive constant yet to be determined, and that $\e(n)$ is decreasing.

\newcommand{\pp}{^{(p)}}		
	
	Given $p \in \cP$, we define $\cN\pp$ to be the set of $N$ such that $N \not \in 2 {} \A{\e(N)}{p}$. Since $\cN\pp \supset \NN \setminus 2 {} \A{\e}{p}$, it will suffice to show that $\cN\pp$ is almost surely finite. For this, it is enough to prove that $\EE_{p \in r B} \abs{\cN\pp} < \infty$, where $B \subset \cP$ denotes a unit ball with respect to some norm on $\cP$.

	Take any $N \in \cN\pp$. Following the argument in Lemma \ref{pos::equi::prop:nequi->ratio}, the orbit $(p(n),p(N-n))$ is not $1/\e(N)^{O(1)}$-equidistributed for $n \in [N]$. Hence, by Theorem \ref{pos::equi::thm:GT}, there exist $(k,l) \in \ZZ^2 \setminus \{(0,0)\}$ such that:
\begin{equation}
	\norm{k  p(n) + l p(N-n) }_{\cC^\infty[N]} \ll \frac{1}{\e(N)^{O(1)}} \ll N^{O(\de)}, \quad \abs{k},\abs{l} \ll N^{O(\de)},
	\label{hd::proof:cond-01}
\end{equation}
	where $\norm{ \sum_i \a_i n^i }_{\cC^\infty[N]} := \max_i N^i \fpa{\a_i} $. 
	
	Given $k,l$ and $p$, let $\cN_{k,l}\pp$ denote the set of all $N$ satisfying the above bound \eqref{hd::proof:cond-01} (for some choice of the implicit constants). We have
	$$ \cN\pp \subset \bigcup_{(k,l) \neq (0,0)} \cN\pp_{k,l}.$$
	We will allow for a certain finite set $\cN_* \subset \NN$ of $N$ which may belong to particularly many sets $\cN_{k,l}\pp$. It will suffice if (for suitable choice of $\cN_*$) we prove the bound
	\begin{equation}
	\sum_{(k,l) \neq (0,0)} \EE_{p \in r B} \abs{ \cN\pp_{k,l} \setminus \cN_* } < \infty.
	\label{hd::proof:bound-15}	
	\end{equation}

	We can write $p(x) = \sum_{i=0}^d \a\pp_i x^i$, where $\a\pp_i$ are affine functions of $p$. Because conditions  \eqref{hd::cond:2@thm} and  \eqref{hd::cond:3@thm} do not hold, $\a\pp_d$ is not a constant function of $p$, and $d \geq 3$.
	
	 A straightforward manipulation of \eqref{hd::proof:cond-01} shows that if $N \in \cN_{k,l}\pp$ then
	 \begin{align}
	 	 \fpa{ k \a\pp_d + (-1)^d l \a\pp_d} & \ll \frac{1}{N^{d-O(\de)}}, 	 	 \label{hd::proof:bound-01}\\
	 	 \fpa{k \a\pp_{d-1} - (-1)^d l \a\pp_{d-1 } - (-1)^d l \a\pp_{d} N } &  \ll \frac{1}{N^{d-1-O(\de)}}. \label{hd::proof:bound-02}	 	
	 \end{align}	 
	
	If $k + (-1)^d l \neq 0$ then the first bound \eqref{hd::proof:bound-01} together with Lemma \ref{hd::lem:geo-equidist}  implies for $r \geq r_0 = r_0(\a_d,\a_{d-1},B)$ that
	\begin{equation}
		\PP_{p \in r B}\left(N \in \cN_{k,l}\pp \right) \ll \frac{1}{N^{d - O(\de)}}.
		\label{hd::proof:bound-25}	
	\end{equation}	
	Else, if $k + (-1)^d l = 0$, then likewise the second bound \eqref{hd::proof:bound-02} together with Lemma \ref{hd::lem:geo-equidist} implies for $r \geq r_0$ that 
	\begin{equation}
		\PP_{p \in r B}\left(N \in \cN_{k,l}\pp \right) \ll \frac{1}{N^{d-1 - O(\de)}},
		\label{hd::proof:bound-03}	
	\end{equation}
	unless $2 \a_{d-1}\pp + N \a_d\pp$ is constant in $p$. The latter condition can only hold for a single value of $N$, independent of $k$ and $l$. Letting $\cN_*$ consist of this specific $N$ (or $\cN_* =\emptyset$ if no such $N$ exists), we conclude that for any $N$ we have the bound
	\begin{equation}
		\PP_{p \in r B}\left(N \in \cN_{k,l}\pp \setminus \cN_* \right) \ll \frac{1}{N^{d-1 - O(\de)}}.
		\label{hd::proof:bound-14}	
	\end{equation}
	 
	Because for $N \in \cN_{k,l}\pp$ we have $\abs{k},\abs{l} \ll N^{O(\delta)}$, at the cost of worsening implicit constants, we may rewrite \eqref{hd::proof:bound-03} as
	\begin{equation}
		\PP_{p \in r B}\left(N \in \cN_{k,l}\pp \setminus \cN_* \right) \ll \frac{1}{(k^4+l^4) N^{d-1 - O(\de)}}.
		\label{hd::proof:bound-24}	
	\end{equation}	

Taking $\delta$ sufficiently small, we can now derive
	\begin{equation}
	\sum_{k,l} \EE_{p \in r B} \abs{ \cN^p_{k,l} \setminus \cN_* } \ll \sum_{k,l} \frac{1}{k^4+l^4} \sum_{N} \frac{1}{N^{1.1}} < \infty.
	\label{hd::proof:bound-35}	
	\end{equation}	
This finishes the proof.
\end{proof}

\begin{remark*}
	The same ideas can be applied to higher dimensions. One then defines \begin{equation*}
	\A{\e}{p} := \set{ n \in \NN }{ \fpa{p_i(n)} \leq \e(n),\ i \in [r] },
\end{equation*}
for a polynomial map $p(n) = (p_i(n))_{i=1}^{r}$. For $\e(n) \geq \e_0 > 0$, these sets will generically be \bases\ of order $2$.

Because the general version of the equidistribution Theorem \ref{pos::equi::thm:GT} holds for general nilmanifolds, similar arguments can be applied to ``generic'' Nil--Bohr sets (see \cite{Green2007} and \cite{Host2009} for relevant definitions).
\end{remark*}

\subsection{Non-\bases\ of order 2}

We close this section considering situations when the sets $\A{\e}{p}$ fail to be \bases\ of order $2$. We show that for higher degrees of polynomials, it is still possible for $\A{\e}{p}$ to fail to be a \base\ of order $2$. For the sake of concreteness, we work with the polynomials of the specific form $p(n) = \a n^d$.

\begin{proof}[Proof of Theorem \ref{intro::thm:B:2}]

Take any $\e_0 < \frac{1}{4}$, and let $d$ be fixed. We first claim, in analogy to Lemma \ref{neg::prop:obst-var}, that there is some $N_0 = N_0(d,\e_0)$ such that if $N > N_0$ is odd and $\e(n) \leq \e_0$ for all $n$, and if $N $ and $\a$ satisfy
$$ \fpa{ N \a - \frac{1}{2} } \leq \frac{1}{N^d}, $$
then $N \not \in 2 {} \A{\e}{p}$. Indeed, if $n_1,n_2 \in [N]$ are such that $n_1+n_2 = N$ then
\begin{align*}
	\fpa{n_1^d \a - (-1)^d n_2^d \a} &= 
	\fpa{ \sum_{j=0}^{d-1} (-1)^j n_1^{d-1-j} n_2^j N \a  }
	= \frac{1}{2} + O\left(\frac 1N\right),
\end{align*}
where the implicit constant in the error term depends only on $d$. Thus it is impossible that $n_1,n_2 \in \A{\e}{p}$ if $\e(n) \leq \e_0 < \frac{1}{4}$ and $N$ is sufficiently large. 

Let $N_i$ be a rapidly increasing sequence of odd integers, and set
$$ \Gamma := \bigcap_{i \in \NN}\Gamma_i, \quad \Gamma_i := \set{ \a \in \TT}{ \fpa{ N_i \a - 1/2} \leq \frac{1}{N_i^d}}.$$
For any $\a \in \Gamma$, we have by the above observation that $N_i \not \in 2 {} \A{\e}{p}$ for all but finitely many $i$.

Note that for each $i$, $\Gamma_i$ is a union of $N_i$ closed intervals of length $\frac{2}{N_i^{d+1}}$, equally spaced in $\TT$. Assuming $N_i$ are increasing rapidly enough, each set $\bigcap_{j < i} \Gamma_j$ is a union of closed intervals, and each of these intervals intersects at least two different intervals in $\Gamma_i$.

It now follows easily that $\Gamma$ contains a homeomorphic copy of the Cantor set, and hence is uncountable. The set $E$ in \ref{intro::thm:B:2} can be taken to be $\Gamma + \ZZ$.
\end{proof}


\appendix
\section{Appendix: Continued fractions}\label{cf::section}

In this appendix we recall some fairly standard facts concerning continued fractions. Because the results are standard, we do not provide proofs, merely references.

\subsection{Basic definitions}

A continued fraction is an expression of the form:
$$ [a_0;a_1,a_2,\dots]
= a_0+
\cfrac{1}{a_1+\cfrac{1}{a_2+ \cfrac{1}{a_3 + \dots}  }},
$$
where $a_0 \in \ZZ$ and $a_i \in \NN$ for $i > 0$. This can be either finite or infinite; we focus mostly on the infinite case.

A standard way to make sense of infinite fractions of this form is to consider consecutive finite approximations, which we typically denote as $\frac{p_n}{q_n}$, given by:

	$$\frac{p_n}{q_n} = [a_0;a_1,a_2,\dots,a_n] = a_0 + \cfrac{1}{a_1+\cfrac{1}{ \ddots \, + \cfrac{1}{a_n} }}.$$
	In particular, $p_0 = a_0,\ q_0 = 1$. It is also convenient to define $p_{-1} = 1$ and $q_{-1} = 0$. 
	
We list some basic properties of the partial approximations. Throughout, $a_i$ denote integers, and $p_i,q_i$ are defined as above. Perhaps the most fundamental fact that we shall use is the following.

\begin{fact}[{\cite[Thm. 5]{Khinchin2009}}] \label{cf:finite}
	We have the relation $[a_0,a_1,\dots,a_{n},x] = \frac{x p_n + p_{n-1} }{x q_n + q_{n-1} }$.
\end{fact}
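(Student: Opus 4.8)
The plan is to prove the identity by induction on $n$, with $x$ regarded as a free real parameter.

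For the base case $n = 0$, recall the conventions $p_0 = a_0$, $q_0 = 1$, $p_{-1} = 1$, $q_{-1} = 0$. Then directly
$$[a_0, x] = a_0 + \tfrac1x = \frac{a_0 x + 1}{x} = \frac{x p_0 + p_{-1}}{x q_0 + q_{-1}},$$
which is the claimed formula.

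For the inductive step, assume the identity holds for continued fractions of length $n-1$ (with an arbitrary final entry). The key observation is that appending $x$ after $a_n$ is the same as replacing $a_n$ by $a_n + \tfrac1x$ in a continued fraction of length $n$, i.e.
$$[a_0, a_1, \dots, a_n, x] = \Bigl[a_0, a_1, \dots, a_{n-1},\, a_n + \tfrac1x\Bigr].$$
Applying the inductive hypothesis to the right-hand side with final entry $y := a_n + \tfrac1x$ yields $\dfrac{y\,p_{n-1} + p_{n-2}}{y\,q_{n-1} + q_{n-2}}$. Substituting $y = a_n + \tfrac1x$, expanding, and invoking the standard recursions $p_n = a_n p_{n-1} + p_{n-2}$ and $q_n = a_n q_{n-1} + q_{n-2}$, the numerator collapses to $p_n + \tfrac1x p_{n-1}$ and the denominator to $q_n + \tfrac1x q_{n-1}$; multiplying numerator and denominator by $x$ gives $\dfrac{x p_n + p_{n-1}}{x q_n + q_{n-1}}$, completing the induction.

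I do not expect any genuine obstacle here: the only ingredient beyond elementary algebra is the recursion for $p_n$ and $q_n$, which is itself established by the same kind of induction (or may simply be taken as the definition of $p_n, q_n$). The one point worth flagging is that the manipulation requires $x \neq 0$, and that the resulting identity of rational functions is exactly what licenses later applications of the formula with $x$ an arbitrary real tail $\rho_{n+1} = [a_{n+1}; a_{n+2}, \dots]$, as used in the proof of Lemma \ref{neg::lem:partial-approximation}.
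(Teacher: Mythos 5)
The paper does not prove this fact: the appendix explicitly states that proofs of the continued-fraction facts are omitted in favour of references, and Fact \ref{cf:finite} simply points to Khinchin. Your inductive argument is the standard proof (it is essentially the one in the cited source) and it is correct. The one point worth flagging is the logical ordering within the paper: the recursions $p_n = a_n p_{n-1} + p_{n-2}$, $q_n = a_n q_{n-1} + q_{n-2}$ appear as Fact \ref{cf:recursive}, which the paper presents as a \emph{consequence} of Fact \ref{cf:finite}, so if one were to prove the facts strictly in the order listed your use of the recursion would be circular. You pre-empt this by observing that the recursion can be taken as the \emph{definition} of the integers $p_n, q_n$ — and indeed it must be, since the paper only ever specifies the ratio $p_n/q_n$ and the values $p_0, q_0, p_{-1}, q_{-1}$, never a canonical choice of numerator and denominator for general $n$. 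With that convention (which is also Khinchin's) your induction is clean and non-circular: the base case checks directly against $p_0 = a_0$, $q_0 = 1$, $p_{-1} = 1$, $q_{-1} = 0$, and the step uses only the telescoping identity $[a_0,\dots,a_n,x] = [a_0,\dots,a_{n-1}, a_n + \tfrac1x]$ together with the defining recursion.
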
 

It is not difficult to derive the the following consequences.

\begin{fact}[{\cite[Thm. 1, 12]{Khinchin2009}}]\label{cf:recursive}
	Sequences $p_n,q_n$ are given recursively by
	\begin{align*}
		p_{n+2} &= a_{n+2} p_{n+1} + p_n,&& p_{-1} = 1,\ p_0 = a_0, \\
		q_{n+2} &= a_{n+2} q_{n+1} + q_n,&& q_{-1} = 0,\ q_0 = 1. 		 
	\end{align*}
	In particular, $p_{n+m} \geq 2^{\frac{m-1}{2}} p_n$ and $q_{n+m} \geq 2^{\frac{m-1}{2}} q_n$ for each $n,m$.
\end{fact}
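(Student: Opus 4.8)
The plan is to deduce everything from Fact \ref{cf:finite}. For the recursion, I would define integer sequences $P_n,Q_n$ by the stated initial data $P_{-1}=1$, $P_0=a_0$, $Q_{-1}=0$, $Q_0=1$ together with $P_{n+2}=a_{n+2}P_{n+1}+P_n$ and $Q_{n+2}=a_{n+2}Q_{n+1}+Q_n$, and then show that these agree with the numerator and denominator of the $n$-th convergent. Applying Fact \ref{cf:finite} with $x=a_{n+1}$ to $[a_0;a_1,\dots,a_n,a_{n+1}]$ gives $[a_0;a_1,\dots,a_{n+1}] = (a_{n+1}P_n+P_{n-1})/(a_{n+1}Q_n+Q_{n-1}) = P_{n+1}/Q_{n+1}$, so by induction $P_n/Q_n = [a_0;a_1,\dots,a_n] = p_n/q_n$ as rationals. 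To promote this to the equalities $P_n=p_n$, $Q_n=q_n$ it suffices to know that $P_n/Q_n$ is already in lowest terms, and this follows from the determinant identity $P_nQ_{n-1}-P_{n-1}Q_n=(-1)^{n-1}$, which is immediate by induction from the recursion. This bookkeeping --- ensuring the recursion holds for the reduced fractions and not merely for some representatives --- is the only point requiring any care, and it is entirely routine.

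For the growth estimates I would first observe that $(q_n)_{n\geq 0}$ is non-decreasing: $q_0=1\leq a_1=q_1$, and if $q_{n-1}\leq q_n$ then $q_{n+1}=a_{n+1}q_n+q_{n-1}\geq q_n$ because $a_{n+1}\geq 1$. Hence $q_{n+2}=a_{n+2}q_{n+1}+q_n\geq q_{n+1}+q_n\geq 2q_n$, and iterating $k$ times gives $q_{n+2k}\geq 2^{k}q_n$. Since $\lfloor m/2\rfloor\geq (m-1)/2$, this yields $q_{n+m}\geq 2^{(m-1)/2}q_n$ for every $m\geq 0$. The same argument applies verbatim to $(p_n)$: under the normalization $a_0=0$ used throughout the paper one has $p_0=0$, $p_1=1$, and $(p_n)_{n\geq 1}$ is non-decreasing by the identical induction, so $p_{n+2}\geq 2p_n$ for $n\geq 1$; the bound for $n=0$ is trivial since every $p_m\geq 0$.

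Since the statement is entirely classical, there is no genuine obstacle here --- the only thing to be slightly attentive to is the coprimality of $p_n$ and $q_n$, which legitimizes reading the convergent recursion off Fact \ref{cf:finite} as a recursion for the canonical numerators and denominators rather than for arbitrary representatives.
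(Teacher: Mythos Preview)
The paper does not supply its own proof of this Fact; the appendix explicitly says the results are standard and merely cites Khinchin. Your argument is correct: specializing Fact~\ref{cf:finite} to $x=a_{n+1}$ yields the recursion at the level of rational values, and your determinant identity $P_nQ_{n-1}-P_{n-1}Q_n=(-1)^{n-1}$ (proved directly by induction from the recursion for the auxiliary sequences) is exactly what is needed to upgrade this to an equality of reduced numerators and denominators. The growth bound via $q_{n+2}\geq 2q_n$ and monotonicity is likewise fine.

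Two small remarks. First, in Khinchin's own development the logical order is reversed: the recursion (his Theorem~1) is proved first, and Fact~\ref{cf:finite} (his Theorem~5) is derived from it. Your route works here because the paper happens to list Fact~\ref{cf:finite} earlier, but it is worth being aware that you are inverting the usual dependency. Second, your caveat about the $p_n$ inequality is well placed: the bound $p_{n+m}\geq 2^{(m-1)/2}p_n$ as literally stated is not correct for arbitrary $a_0\in\ZZ$ (for $a_0<0$ one can have $p_{n+2}<2p_n$), so restricting to the normalization $a_0=0$ used throughout the body of the paper is the right move, and in any case only the $q_n$ bound is ever invoked there.
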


\begin{fact}[{\cite[Thm. 2]{Khinchin2009}}]\label{cf:Delta}
	We have 
	$$ q_n p_{n+1} - q_{n+1} p_n = (-1)^n,\quad \text{or equivalently:}\quad \frac{p_{n+1}}{q_{n+1}} - \frac{p_{n}}{q_n} = \frac{(-1)^n}{q_nq_{n+1}}.$$	
\end{fact}

The sequence $\frac{p_n}{q_n}$ converges rather rapidly. We shall denote
	$$\a := \lim_{n\to \infty} \frac{p_n}{q_n} = [a_0;a_1,a_2,\dots].$$
and refer to $a_i$ as the continued fraction expansion of $\a$.

\begin{fact}[{\cite[Thm. 9]{Khinchin2009}}]\label{cf:error}
	The speed of convergence above in $\a = \lim_{n\to \infty} \frac{p_n}{q_n}$ is described by:
	$$
		\a - \frac{p_n}{q_n} = \sum_{i=n}^{\infty} \frac{(-1)^i}{q_i q_{i+1}}, 
	$$
	and in particular $\abs{ \a - \frac{p_n}{q_n} } < \frac{1}{q_n q_{n+1}} < \frac{1}{a_{n+1} q_n^2}$.

\end{fact}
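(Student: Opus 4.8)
The plan is to obtain everything from the telescoping identity supplied by Fact~\ref{cf:Delta}, together with the growth of the denominators from Fact~\ref{cf:recursive}. First I would fix $n$ and, for any $m > n$, sum the identity $\frac{p_{i+1}}{q_{i+1}} - \frac{p_i}{q_i} = \frac{(-1)^i}{q_i q_{i+1}}$ over $i = n, n+1, \dots, m-1$ to get
\[
	\frac{p_m}{q_m} - \frac{p_n}{q_n} = \sum_{i=n}^{m-1} \frac{(-1)^i}{q_i q_{i+1}}.
\]
By Fact~\ref{cf:recursive} we have $q_i \geq 2^{(i-1)/2} \to \infty$, so the tails $\sum_{i \geq n} \frac{1}{q_i q_{i+1}}$ converge absolutely; in particular the right-hand side above converges as $m \to \infty$, which both shows that $\frac{p_m}{q_m}$ is Cauchy (reproving convergence to $\a$) and, upon letting $m \to \infty$, yields
\[
	\a - \frac{p_n}{q_n} = \sum_{i=n}^{\infty} \frac{(-1)^i}{q_i q_{i+1}}.
\]

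Next I would read off the stated bound. The positive numbers $t_i := \frac{1}{q_i q_{i+1}}$ are strictly decreasing: since $a_{i+2} \geq 1$ and $q_{i+1} > 0$, Fact~\ref{cf:recursive} gives $q_{i+2} = a_{i+2} q_{i+1} + q_i > q_i$, hence $q_{i+1} q_{i+2} > q_i q_{i+1}$ and $t_{i+1} < t_i$. Thus $\sum_{i \geq n} (-1)^i t_i$ is an alternating series whose terms strictly decrease to $0$, so its value has absolute value strictly smaller than its leading term $t_n$; that is, $\abs{\a - \frac{p_n}{q_n}} < \frac{1}{q_n q_{n+1}}$. Finally, $q_{n+1} = a_{n+1} q_n + q_{n-1} \geq a_{n+1} q_n$, with strict inequality once $n \geq 1$ because then $q_{n-1} \geq 1$, and this gives $\frac{1}{q_n q_{n+1}} < \frac{1}{a_{n+1} q_n^2}$.

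There is really no obstacle here: the argument uses nothing beyond Facts~\ref{cf:recursive} and~\ref{cf:Delta}, and the only point requiring a moment's care is justifying the alternating-series estimate for an infinite tail, for which one needs the terms to be monotone and to vanish — both of which hold from index $n$ onward. For completeness one might also remark that the same computation shows $\a$ lies strictly between any two consecutive convergents $\frac{p_n}{q_n}$ and $\frac{p_{n+1}}{q_{n+1}}$, which is the customary way this fact is packaged in references such as \cite{Khinchin2009}.
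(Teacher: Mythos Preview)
Your argument is correct. The paper does not supply its own proof of this fact---the appendix explicitly says that ``because the results are standard, we do not provide proofs, merely references''---so there is nothing to compare against beyond the cited source. Your derivation via telescoping from Fact~\ref{cf:Delta}, absolute convergence from the growth in Fact~\ref{cf:recursive}, and the alternating-series bound is exactly the standard route (and is essentially how Khinchin does it). The one cosmetic point is that the second strict inequality $\frac{1}{q_n q_{n+1}} < \frac{1}{a_{n+1} q_n^2}$ degenerates to equality at $n=0$ since $q_{-1}=0$; you flagged this yourself, and the statement is implicitly meant for $n \geq 1$.
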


As a consequence, it is always easy to compare two continued fraction approximations.
\begin{fact}[{\cite[Thm. 4]{Khinchin2009}}]\label{cf:order}
 We have the ordering: 
$$\frac{p_0}{q_0} < \frac{p_2}{q_2} <  \frac{p_4}{q_4} < \dots < \a < \dots < \frac{p_5}{q_5} < \frac{p_3}{q_3} < \frac{p_1}{q_1}. $$

\end{fact}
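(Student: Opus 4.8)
The plan is to derive the whole chain of inequalities from Facts \ref{cf:recursive}, \ref{cf:Delta} and \ref{cf:error}, which we may treat as already established. The first step is a two-term difference formula for the convergents: applying Fact \ref{cf:Delta} at the indices $n$ and $n+1$ and using $q_{n+2} = a_{n+2}q_{n+1}+q_n$ from Fact \ref{cf:recursive}, one gets, for $n \geq 0$,
$$ \frac{p_{n+2}}{q_{n+2}} - \frac{p_n}{q_n} = \frac{(-1)^{n+1}}{q_{n+1}q_{n+2}} + \frac{(-1)^n}{q_nq_{n+1}} = \frac{(-1)^n a_{n+2}}{q_nq_{n+2}}. $$
Since $a_{n+2} \geq 1$ and every $q_i > 0$, this says precisely that the even-indexed convergents $\frac{p_0}{q_0} < \frac{p_2}{q_2} < \frac{p_4}{q_4} < \dots$ form a strictly increasing sequence and the odd-indexed convergents $\frac{p_1}{q_1} > \frac{p_3}{q_3} > \frac{p_5}{q_5} > \dots$ form a strictly decreasing one.

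The second step is the ``cross'' inequality $\frac{p_{2k}}{q_{2k}} < \frac{p_{2l+1}}{q_{2l+1}}$ for all $k$ and $l$. From Fact \ref{cf:Delta} one reads off the two consecutive comparisons $\frac{p_{2j}}{q_{2j}} < \frac{p_{2j+1}}{q_{2j+1}}$ and $\frac{p_{2j}}{q_{2j}} < \frac{p_{2j-1}}{q_{2j-1}}$, since the relevant difference $\frac{(-1)^n}{q_nq_{n+1}}$ has the appropriate sign in each case. If $2k \leq 2l$, I would move up the even chain from index $2k$ to index $2l$ and then apply the first consecutive comparison; if $2k > 2l+1$, I would use the second consecutive comparison to drop below $\frac{p_{2k-1}}{q_{2k-1}}$ and then move down the odd chain from index $2k-1 \geq 2l+1$. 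Since $2k = 2l+1$ is impossible on parity grounds, this settles every case.

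The final step places $\a$ strictly between the two families. By Fact \ref{cf:error}, $\a = \lim_n \frac{p_n}{q_n}$, and since $\abs{\frac{p_{n+1}}{q_{n+1}} - \frac{p_n}{q_n}} = \frac{1}{q_nq_{n+1}} \to 0$ (the $q_n$ grow, by Fact \ref{cf:recursive}), the increasing even subsequence and the decreasing odd subsequence both converge to $\a$; hence $\frac{p_{2k}}{q_{2k}} \leq \a \leq \frac{p_{2l+1}}{q_{2l+1}}$ for all $k$ and $l$, with strict inequality because in the infinite case the monotone subsequences never stabilise. There is essentially no obstacle here, since the result is classical and elementary; the only mild care needed is the case bookkeeping in the cross inequality of the second step, making sure one compares the correct consecutive pair when $2k$ and $2l+1$ are far apart.
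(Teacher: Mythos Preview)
Your proof is correct. Note, however, that the paper does not actually prove this statement: it lives in the appendix on continued fractions, where the author explicitly says ``Because the results are standard, we do not provide proofs, merely references,'' and simply cites \cite[Thm.~4]{Khinchin2009}. So there is no ``paper's own proof'' to compare against; your argument is the standard textbook derivation (essentially the one in Khinchin), deducing the two-step difference formula from Facts~\ref{cf:recursive} and~\ref{cf:Delta} and then sandwiching $\a$ via Fact~\ref{cf:error}.
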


It is often useful to have a good understanding of the ratio $\frac{q_{n+1}}{q_{n}}$. Fortunately, this quantity has a simple description.

\begin{fact}[{\cite[Thm. 6]{Khinchin2009}}]\label{cf:mirror}
	We have $\frac{q_{n+1}}{q_{n}} = [a_{n+1};a_{n},\dots,a_1]$.
\end{fact}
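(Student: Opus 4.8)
The plan is to prove this identity by a short induction on $n$, using nothing beyond the three-term recursion for the denominators recorded in Fact~\ref{cf:recursive}. The substantive content is simply that passing from $q_n/q_{n-1}$ to $q_{n+1}/q_n$ amounts to prepending one more coefficient to a continued fraction whose entries are the $a_i$ in reverse order, running down to the final entry $a_1$ (which the statement above abbreviates as $1$).

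For the base case take $n=0$: from $q_{-1}=0$ and $q_0=1$ the recursion gives $q_1 = a_1 q_0 + q_{-1} = a_1$, so $q_1/q_0 = a_1 = [a_1]$, a one-term continued fraction. For the inductive step, assume $q_n/q_{n-1} = [a_n; a_{n-1},\dots,a_1]$. Since every $a_i$ with $i\ge 1$ is a positive integer we have $[a_n;a_{n-1},\dots,a_1]\ge a_n\ge 1$, so its reciprocal is well defined, and the recursion $q_{n+1}=a_{n+1}q_n+q_{n-1}$ yields
\[ \frac{q_{n+1}}{q_n} = a_{n+1} + \frac{q_{n-1}}{q_n} = a_{n+1} + \frac{1}{\,q_n/q_{n-1}\,} = a_{n+1} + \frac{1}{[a_n;a_{n-1},\dots,a_1]} = [a_{n+1};a_n,a_{n-1},\dots,a_1], \]
where the last equality is just the definition of the continued fraction obtained by prepending $a_{n+1}$. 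Re-indexing gives the stated formula for all $n\ge 0$.

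An alternative, essentially computation-free route is the matrix encoding $\left(\begin{smallmatrix}p_n & p_{n-1}\\ q_n & q_{n-1}\end{smallmatrix}\right) = \left(\begin{smallmatrix}a_0 & 1\\ 1 & 0\end{smallmatrix}\right)\left(\begin{smallmatrix}a_1 & 1\\ 1 & 0\end{smallmatrix}\right)\cdots\left(\begin{smallmatrix}a_n & 1\\ 1 & 0\end{smallmatrix}\right)$, which follows from Fact~\ref{cf:recursive} (or from Fact~\ref{cf:finite}). Each factor is symmetric, so transposing reverses the order of the product; stripping the $j=0$ factor by right-multiplying the transpose by $\left(\begin{smallmatrix}a_0 & 1\\ 1 & 0\end{smallmatrix}\right)^{-1}$ leaves $\left(\begin{smallmatrix}a_n & 1\\ 1 & 0\end{smallmatrix}\right)\cdots\left(\begin{smallmatrix}a_1 & 1\\ 1 & 0\end{smallmatrix}\right)$ with first column $(q_n,q_{n-1})^{\mathsf T}$, whose ratio, by the same encoding applied to the reversed digit string, is $[a_n;a_{n-1},\dots,a_1]$. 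Either way there is no genuine obstacle here: the only points needing care are the degenerate small-$n$ cases (empty or one-term continued fractions) and the non-vanishing of the denominators, both covered by $a_i\ge 1$ for $i\ge 1$.
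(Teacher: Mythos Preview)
Your proof is correct. The paper itself gives no proof of this fact at all: it is stated in an appendix of standard continued-fraction results with the remark that ``because the results are standard, we do not provide proofs, merely references,'' and simply cites Khinchin. Your induction on the recursion $q_{n+1}=a_{n+1}q_n+q_{n-1}$ is exactly the standard argument (and is essentially how Khinchin proves it), and your reading of the final ``$1$'' in the displayed statement as $a_1$ is the intended one.
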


\subsection{Ergodic perspective}

We refer to the sequence $a_i$ as the continued fraction expansion of $\a$ above. Every irrational number has precisely one (infinite) expansion. (A similar statement is true for rational numbers, except one needs to be careful with uniqueness.) More precisely, we have the following fact.

\begin{fact}[{\cite[Lem. 3.4]{Einsiedler2010}}]
	The map $\ZZ\times \NN_{\geq 1}^\NN \colon (a_0,a_1,\dots) \mapsto [a_0;a_1,a_2,\dots,] \in \RR \setminus \QQ$ is a bijection. Likewise, the map $ \NN_{\geq 1}^\NN \colon (a_1,\dots) \mapsto [0;a_1,a_2,\dots,] \in [0,1] \setminus \QQ$ is a bijection.
\end{fact}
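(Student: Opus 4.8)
This is a classical result; the plan is to assemble it from the facts recorded above in four steps --- convergence, irrationality, surjectivity, and injectivity --- and then obtain the second assertion by restriction (throughout, $a_i\ge 1$ for $i\ge 1$). First I would check that the map is well defined: for any $(a_0,a_1,\dots)\in\ZZ\times\NN^\NN$, Fact \ref{cf:recursive} gives $q_{n+m}\ge 2^{(m-1)/2}q_n$, so $q_n\to\infty$, and by Fact \ref{cf:error} the differences $\frac{p_{n+1}}{q_{n+1}}-\frac{p_n}{q_n}=\frac{(-1)^n}{q_nq_{n+1}}$ are the general term of an alternating series whose terms strictly decrease to $0$; hence $\frac{p_n}{q_n}$ is Cauchy and $\alpha:=\lim_n\frac{p_n}{q_n}$ exists, with $\alpha\ne\frac{p_n}{q_n}$ for every $n$ since the tail $\alpha-\frac{p_n}{q_n}=\sum_{i\ge n}\frac{(-1)^i}{q_iq_{i+1}}$ is a nonzero alternating sum. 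To see $\alpha$ is irrational, suppose $\alpha=a/b$ with $b\ge 1$: then $aq_n-bp_n\ne 0$, so $|aq_n-bp_n|\ge 1$ and $\bigl|\alpha-\frac{p_n}{q_n}\bigr|\ge\frac{1}{bq_n}$, which together with $\bigl|\alpha-\frac{p_n}{q_n}\bigr|<\frac{1}{q_nq_{n+1}}$ forces $q_{n+1}<b$ for all $n$, contradicting $q_n\to\infty$. So the map lands in $\RR\setminus\QQ$.

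Next I would prove surjectivity via the Gauss map. Given an irrational $\alpha$, set $a_0=\lfloor\alpha\rfloor$, $x_0=\alpha-a_0\in(0,1)\setminus\QQ$, and recursively $a_{n+1}=\lfloor 1/x_n\rfloor\ge 1$ and $x_{n+1}=1/x_n-a_{n+1}\in(0,1)\setminus\QQ$. Feeding the real number $1/x_n$ into Fact \ref{cf:finite} gives $\alpha=\frac{(1/x_n)p_n+p_{n-1}}{(1/x_n)q_n+q_{n-1}}$, and then Fact \ref{cf:Delta} yields $\bigl|\alpha-\frac{p_n}{q_n}\bigr|=\frac{1}{q_n\bigl((1/x_n)q_n+q_{n-1}\bigr)}<\frac{1}{q_n^2}\to 0$, so $[a_0;a_1,a_2,\dots]=\lim_n\frac{p_n}{q_n}=\alpha$.

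For injectivity, suppose $[a_0;a_1,a_2,\dots]=\alpha$. By Fact \ref{cf:order} the value $[0;a_1,a_2,\dots]$ lies strictly between $\frac{p_0}{q_0}=0$ and $\frac{p_1}{q_1}=\frac{1}{a_1}\le 1$, hence lies in $(0,1)$; so $a_0=\lfloor\alpha\rfloor$ and $[0;a_1,a_2,\dots]=x_0$ are determined by $\alpha$. Applying $x\mapsto 1/x$ gives $[a_1;a_2,\dots]=1/x_0$, so $a_1=\lfloor 1/x_0\rfloor$ and $[0;a_2,a_3,\dots]=x_1$ are determined; iterating, every $a_n$ is a function of $\alpha$ alone. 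Restricting this whole discussion to $a_0=0$ shows that $(a_1,a_2,\dots)\mapsto[0;a_1,a_2,\dots]$ is a bijection onto the irrationals with integer part $0$, i.e. onto $(0,1)\setminus\QQ$.

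The only step that is genuinely more than bookkeeping is the analytic half of the surjectivity argument: passing from the algebraic identity of Fact \ref{cf:finite}, valid for a formal or real variable $x$, to the convergence statement $[a_0;a_1,a_2,\dots]=\lim_n\frac{p_n}{q_n}=\alpha$. This requires simultaneously that the tails $[a_{n+1};a_{n+2},\dots]=1/x_n$ are legitimate (which is the convergence part of the first step, applied to shifted sequences) and the elementary bound $(1/x_n)q_n+q_{n-1}\ge q_n$. Everything else is a direct application of Facts \ref{cf:recursive}, \ref{cf:Delta}, \ref{cf:error} and \ref{cf:finite}.
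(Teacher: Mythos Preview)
Your argument is correct and is the standard proof of this classical fact. The paper itself does not prove this statement at all: it is stated as a Fact in the appendix with a reference to \cite[Lem.~3.4]{Einsiedler2010}, in keeping with the appendix's announced policy that ``because the results are standard, we do not provide proofs, merely references.'' So there is nothing to compare against; your write-up simply supplies what the paper deliberately omits, and it does so using exactly the appendix facts (\ref{cf:recursive}, \ref{cf:Delta}, \ref{cf:error}, \ref{cf:finite}, \ref{cf:order}) that the paper records.

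One cosmetic remark: the paper declares $\NN=\{0,1,2,\dots\}$ in the introduction, while continued fraction partial quotients must be $\ge 1$; you are right to flag this with your parenthetical ``throughout, $a_i\ge 1$ for $i\ge 1$'', since otherwise the statement as written is false. This is a wrinkle in the paper's notation rather than in your proof.
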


\begin{definition}[Continued fraction transformation]
	Define the transformation $T\colon [0,1]\setminus \QQ \to [0,1]\setminus \QQ$ by $T\alpha = \fp{\frac{1}{\alpha}}$, where $\{ x \}$ denotes the fractional part of $x$. (One may extend the definition to $\QQ$ by setting $T\alpha = 0$ for $x \in \QQ$ if one wishes to have a map $T\colon [0,1] \to [0,1]$.)
	
	Define the measure $\mu$ on $[0,1]$ by $\mu(A) = \frac{1}{\log 2} \int_{A} \frac{dx}{x+1}$ for $A \in \Borel([0,1]) $, where $\Borel([0,1])$ denotes the Borel $\sigma$-algebra.
	
	We refer to the transformation $T$ as the \emph{continued fraction transformation} and to  $\mu$ as the \emph{Gauss measure}.
\end{definition}

\begin{fact}[{\cite[Chpt. 3]{Einsiedler2010}}]
	The transformation $T$ acts on $\NN_{\geq 1}^{\NN}$ by a shift:
	$$
		T([0;a_1,a_2,\dots]) = T([0;a_2,a_3,\dots]).
	$$
\end{fact}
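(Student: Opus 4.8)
The plan is to unwind the definition of the value of a continued fraction: by its very construction $[0;a_1,a_2,a_3,\dots]$ equals $\frac{1}{a_1+[0;a_2,a_3,\dots]}$, so applying $T(x)=\fp{1/x}$ should peel off the leading partial quotient $a_1$ and leave the tail. Concretely, I would write $\alpha=[0;a_1,a_2,a_3,\dots]$ and set $\beta:=[0;a_2,a_3,\dots]$, so that the identity to prove is $T\alpha=\beta$. The first step is to record the algebraic relation $1/\alpha=a_1+\beta$. This follows from the recursive description of the convergents: applying Fact \ref{cf:finite} to the truncation $[a_0;a_1,\dots,a_n,x]$ and letting $n\to\infty$ while $x$ runs through the truncations of the tail — or, equivalently, combining Fact \ref{cf:recursive} with the convergence statement in Fact \ref{cf:error} — gives $\alpha=1/(a_1+\beta)$, hence $1/\alpha=a_1+\beta$.

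Next I would locate $\beta$ in the open unit interval so that the last identity can be read as an integer/fractional part decomposition. By the bijection recalled above (the map $(a_1,a_2,\dots)\mapsto[0;a_1,a_2,\dots]$ is a bijection onto $[0,1]\setminus\QQ$), the tail value $\beta=[0;a_2,a_3,\dots]$ lies in $[0,1]\setminus\QQ$, hence $\beta\in(0,1)$; alternatively one argues elementarily that $1/\beta=a_2+[0;a_3,a_4,\dots]\ge a_2\ge 1$ gives $0<\beta\le 1$, with irrationality of $\alpha$, and therefore of $\beta$, excluding the endpoint. Since $a_1$ is a positive integer and $0<\beta<1$, the relation $1/\alpha=a_1+\beta$ is precisely the splitting of $1/\alpha$ into its integer and fractional parts, i.e. $\floor{1/\alpha}=a_1$ and $\fp{1/\alpha}=\beta$. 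Consequently $T\alpha=\fp{1/\alpha}=\beta=[0;a_2,a_3,\dots]$, which is the assertion; phrased on the level of digit sequences, this says that under the bijection identifying $\NN^{\NN}$ with $(0,1)\setminus\QQ$ the map $T$ becomes the one-sided shift $(a_1,a_2,a_3,\dots)\mapsto(a_2,a_3,\dots)$.

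The only point requiring any care — the ``main obstacle'', such as it is — is the middle step: verifying that the tail $[0;a_2,a_3,\dots]$ is a genuine element of $(0,1)$, so that reading off the integer and fractional parts of $1/\alpha$ is legitimate. This is not a serious difficulty, being immediate from the convergence and bijection facts already recorded in this appendix, and everything else is a single line of arithmetic. Of course, in the paper itself this is a standard fact quoted from \cite{Einsiedler2010}, so no proof is reproduced there.
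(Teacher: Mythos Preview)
Your argument is correct and is exactly the standard one-line unwinding of the definition; there is nothing to compare it to, since the paper does not supply a proof of this fact but merely records it with a reference to \cite{Einsiedler2010}, as you yourself note at the end. Incidentally, the displayed formula in the statement has an obvious typo (a spurious $T$ on the right-hand side); you have correctly interpreted and proved the intended assertion $T([0;a_1,a_2,\dots]) = [0;a_2,a_3,\dots]$.
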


\begin{fact}[{\cite[Chpt. 3]{Einsiedler2010}}]
	The measure $\mu$ is equivalent to the Lebesgue measure. The transformation $T$ is measurable and piecewise continuous.
\end{fact}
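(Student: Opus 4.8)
The plan is to verify the two assertions independently, both directly from the explicit definition of the Gauss measure $\mu(A) = \frac{1}{\log 2}\int_A \frac{dx}{x+1}$; neither needs anything beyond elementary estimates.

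First, for the equivalence of $\mu$ with Lebesgue measure $\lambda$ on $[0,1]$, I would begin by recording that $\mu$ is in fact a probability measure, since $\frac{1}{\log 2}\int_0^1 \frac{dx}{x+1} = \frac{1}{\log 2}\bigl(\log 2 - \log 1\bigr) = 1$. By construction $\mu$ is absolutely continuous with respect to $\lambda$, with Radon--Nikodym density $g(x) = \frac{1}{(\log 2)(x+1)}$. For $x \in [0,1]$ we have $x+1 \in [1,2]$, hence $\frac{1}{2\log 2} \le g(x) \le \frac{1}{\log 2}$: the density is bounded above and bounded away from zero. It follows that $\frac{1}{2\log 2}\,\lambda(A) \le \mu(A) \le \frac{1}{\log 2}\,\lambda(A)$ for every Borel $A \subseteq [0,1]$, so that $\mu(A) = 0$ if and only if $\lambda(A) = 0$; that is, $\mu$ and $\lambda$ are mutually absolutely continuous, hence equivalent.

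Second, for the statement about $T(x) = \fp{1/x}$ (with $T$ set to $0$ on $\QQ \cap [0,1]$), I would partition $(0,1]$ into the half-open intervals $I_n := (\tfrac{1}{n+1}, \tfrac{1}{n}]$ for $n \ge 1$. On $I_n$ one has $1/x \in [n, n+1)$, so $T(x) = 1/x - n$, which is the restriction to $I_n$ of a Möbius transformation and in particular is continuous there; this exhibits $T$ as piecewise continuous. Measurability is then automatic: each restriction $T|_{I_n}$ is continuous, hence Borel measurable on $I_n$; a map whose restriction to every member of a countable Borel partition of its domain is measurable is itself measurable; and the remaining points — namely $\{0\}$ together with the countable set $\QQ \cap [0,1]$, where $T$ has been assigned a value by fiat — form a Borel (indeed countable) set, so prescribing $T$ there arbitrarily changes nothing.

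There is no genuine obstacle here; the only point requiring a little care is the bookkeeping of the exceptional countable set — the rationals, the point $0$, and the interval endpoints $1/n$ — at which $T$ is either defined directly or is discontinuous. Since this set is countable it is both $\lambda$-null and $\mu$-null, so it affects neither the measurability claim nor the equivalence of the two measures.
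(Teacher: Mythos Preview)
Your argument is correct. The paper itself does not supply a proof of this fact at all: it is stated in the appendix with the preface ``Because the results are standard, we do not provide proofs, merely references,'' and the fact is simply attributed to \cite[Chpt.~3]{Einsiedler2010}. So there is nothing to compare against; you have filled in elementary details that the paper deliberately omits, and your verification --- bounding the density $g(x)=\frac{1}{(\log 2)(x+1)}$ between $\frac{1}{2\log 2}$ and $\frac{1}{\log 2}$ for the equivalence, and partitioning into the intervals $I_n=(\tfrac{1}{n+1},\tfrac{1}{n}]$ on which $T(x)=1/x-n$ for the piecewise continuity and measurability --- is the standard one.
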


\begin{fact}[{\cite[Chpt. 3]{Einsiedler2010}}]
	The transformation $T$ is $\mu$-invariant, in the sense that for each $A \in \Borel([0,1])$ we have $\mu(T^{-1}(A)) = \mu(A)$. 
\end{fact}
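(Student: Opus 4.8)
The plan is to verify the identity $\mu(T^{-1}A) = \mu(A)$ first on a convenient generating class and then extend it by a standard uniqueness argument. The half-open intervals $A = [0,t)$ with $t \in (0,1]$ form a $\pi$-system generating $\Borel([0,1])$, and both $\mu$ and $\mu \circ T^{-1}$ are finite measures on $[0,1]$ (note $\mu([0,1]) = \tfrac{1}{\log 2}\int_0^1 \tfrac{dx}{1+x} = 1$, and $\mu \circ T^{-1}$ is a measure because $T$ is measurable, as recorded in the preceding Fact). Hence, by Dynkin's $\pi$--$\lambda$ theorem, it suffices to check $\mu(T^{-1}[0,t)) = \mu([0,t))$ for every such $t$. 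Throughout we may ignore the rationals, which form a $\mu$-null set, so the particular convention for $T$ on $\QQ$ is irrelevant.

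First I would compute the preimage explicitly. For $\alpha$ in the interval $\left(\tfrac{1}{n+1}, \tfrac{1}{n}\right]$ one has $\floor{1/\alpha} = n$, hence $T\alpha = \tfrac1\alpha - n$, so that $T\alpha < t$ exactly when $\tfrac1\alpha < n+t$, i.e.\ when $\alpha > \tfrac{1}{n+t}$. Therefore
\[
	T^{-1}\bigl([0,t)\bigr) = \bigcup_{n=1}^{\infty} \left( \frac{1}{n+t}, \frac{1}{n} \right],
\]
a disjoint union of intervals.

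Next I would evaluate $\mu$ of this union term by term, using $\int \tfrac{dx}{1+x} = \log(1+x)$:
\[
	\mu\bigl(T^{-1}[0,t)\bigr) = \frac{1}{\log 2} \sum_{n=1}^{\infty} \left( \log\frac{n+1}{n} - \log\frac{n+t+1}{n+t} \right).
\]
The $n$-th summand is positive and, since $\log\frac{n+1}{n} - \log\frac{n+t+1}{n+t} = \log(n+1) - \log n - \log(n+t+1) + \log(n+t)$, it equals $b_n - b_{n+1}$ with $b_n := \log\frac{n+t}{n}$. The series therefore telescopes: the $N$-th partial sum is $b_1 - b_{N+1} = \log(1+t) - \log\frac{N+t+1}{N+1}$, which tends to $\log(1+t)$ as $N \to \infty$. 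Hence $\mu(T^{-1}[0,t)) = \tfrac{\log(1+t)}{\log 2} = \mu([0,t))$, as required.

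The only place where a little care is needed is the manipulation of the logarithmic series: one should note that each summand is positive and of size $O(t/n^2)$, so the series converges absolutely and the telescoping is legitimate. Everything else — the piecewise formula for $T$, the explicit description of the preimage, and the passage from the generating $\pi$-system to all of $\Borel([0,1])$ via the $\pi$--$\lambda$ theorem — is entirely routine, which is presumably why the reference merely cites \cite[Chpt.~3]{Einsiedler2010}.
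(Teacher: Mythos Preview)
Your argument is correct and is exactly the standard verification: compute $T^{-1}[0,t)$ branch by branch, integrate $\frac{dx}{1+x}$, telescope, and extend via the $\pi$--$\lambda$ theorem. The paper itself does not prove this Fact at all --- it is stated in the appendix as a citation to \cite[Chpt.~3]{Einsiedler2010} --- so there is nothing to compare beyond noting that your proof is essentially the one found in that reference.
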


Thus, $([0,1],T,\Borel([0,1]),\mu)$ is a measure preserving system (for introduction to measure preserving systems, see e.g. \cite[Chapter 1]{Einsiedler2010}).

\begin{fact}[{\cite[Thm. 3.7]{Einsiedler2010}}]
	The measure preserving system  $([0,1], T, \Borel([0,1]),  \mu)$ is ergodic.
\end{fact}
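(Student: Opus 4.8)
\medskip
\noindent\textbf{Proof proposal.}
The plan is to reduce the ergodicity of $T$ to a statement about Lebesgue measure $\lambda$ — which is legitimate, since the Gauss measure $\mu$ is equivalent to $\lambda$, so both have the same null sets and hence the same $T$-invariant sets are trivial (or not) simultaneously — and then to establish the standard ``bounded distortion'' estimate for the inverse branches of $T$, from which ergodicity follows by a Lebesgue-density (Knopp-type) argument. Concretely, it suffices to show that every Borel set $E$ with $T^{-1}E = E$ satisfies $\lambda(E)\in\{0,1\}$.

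The first step is to describe how $T^n$ acts on a rank-$n$ cylinder $\Delta = \set{x\in[0,1]\setminus\QQ}{a_i(x)=b_i \text{ for } 1\le i\le n}$, where $b_1,\dots,b_n\in\NN$ are fixed. By Fact~\ref{cf:finite}, $T^n$ restricted to $\Delta$ is a bijection onto $(0,1)\setminus\QQ$ whose inverse is the M\"obius map $\psi(y) = \bra{p_n + y p_{n-1}}/\bra{q_n + y q_{n-1}}$, where $p_i/q_i$ are the convergents of $[0;b_1,b_2,\dots]$. Differentiating and using $p_{n-1}q_n - p_n q_{n-1} = \pm 1$ (Fact~\ref{cf:Delta}) gives $\abs{\psi'(y)} = \bra{q_n + y q_{n-1}}^{-2}$, and since $0\le q_{n-1}\le q_n$ (Fact~\ref{cf:recursive}) one gets $\abs{\psi'(y)} \le 4\,\abs{\psi'(z)}$ for all $y,z\in(0,1)$. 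From this bounded-distortion bound I would deduce, for every cylinder $\Delta$ of every rank $n\ge 1$ and every measurable $A\subseteq[0,1]$,
\[
	\lambda\bra{\Delta\cap T^{-n}A} = \int_A \abs{\psi'}\,d\lambda \ \ge\ \tfrac14\,\lambda(\Delta)\,\lambda(A),
\]
using that $\inf_{(0,1)}\abs{\psi'}\ge\tfrac14\sup_{(0,1)}\abs{\psi'}\ge\tfrac14\int_0^1\abs{\psi'}=\tfrac14\lambda(\Delta)$.

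For the final step, let $E$ be invariant, so $T^{-n}E = E$ for all $n$; applying the displayed inequality with $A = E^c$ yields $\lambda(\Delta\cap E^c)\ge\tfrac14\lambda(\Delta)\lambda(E^c)$ for \emph{every} cylinder $\Delta$. If $\lambda(E^c)>0$ this contradicts the Lebesgue density theorem applied along the nested, shrinking filtration of cylinders, which would produce a point $x\in E$ with $\lambda(\Delta_n(x)\cap E)/\lambda(\Delta_n(x))\to 1$, where $\Delta_n(x)$ is the rank-$n$ cylinder through $x$ (one has $\lambda(\Delta_n(x))\le q_n^{-2}\to 0$, and $\mathbb{E}[1_E\mid\mathcal F_n]\to 1_E$ a.e.\ by the martingale theorem). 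Hence $\lambda(E)=0$ or $\lambda(E^c)=0$, and since $\mu\sim\lambda$ this gives $\mu(E)\in\{0,1\}$, i.e.\ ergodicity.

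The one genuinely substantial ingredient is the bounded-distortion estimate; once it is available, the remainder is the density theorem together with the elementary continued-fraction recurrences already recorded in the Appendix. An essentially equivalent alternative would be to encode the same estimate in the transfer (Perron--Frobenius) operator of $T$ and prove a Lasota--Yorke / Kuzmin--Wirsing contraction statement, but the density argument above is the shorter route.
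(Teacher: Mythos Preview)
Your argument is correct and is the standard bounded-distortion (Knopp) proof of ergodicity of the Gauss map; note, however, that the paper does not supply its own proof of this Fact at all --- it merely cites \cite[Thm.~3.7]{Einsiedler2010}. Your write-up is essentially the argument given there, so in that sense the approaches coincide.
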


\subsection{Good rational approximations}

Essentially all good rational approximations of a number $\a = [a_0;a_1,a_2,\dots]$ come from continued fractions.

\begin{fact}[Legendre, {\cite[Thm. 4]{Khinchin2009}}]\label{cf:Legendre}
	If $\abs{ \alpha - \frac{p}{q} } < \frac{1}{2q^2}$ for some $\frac{p}{q}$, then there exists some $i$ with  $\frac{p}{q} = \frac{p_i}{q_i}$.  
\end{fact}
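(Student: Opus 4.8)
The plan is to reproduce the classical argument of Legendre, working directly with the finite continued fraction expansion of $p/q$. Assume without loss of generality that $\gcd(p,q) = 1$ and $q \geq 1$, and that $\alpha$ is irrational, so that we may write $\alpha - \frac{p}{q} = \frac{\theta}{q^{2}}$ with $0 < \abs{\theta} < \tfrac12$. Expand $\frac{p}{q}$ as a finite continued fraction; since $[a_0;a_1,\dots,a_m] = [a_0;a_1,\dots,a_m-1,1]$ whenever $a_m \geq 2$, we are free to pick the expansion $\frac{p}{q} = [a_0;a_1,\dots,a_n]$ so that $n$ has whichever parity makes $(-1)^n$ agree in sign with $\theta$. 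Write $p_j/q_j$ ($-1 \le j \le n$) for its convergents, so $p_n = p$ and $q_n = q$.

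The heart of the proof is to exhibit a real number $\omega > 1$ with $\alpha = [a_0;a_1,\dots,a_n,\omega]$. By Fact \ref{cf:finite} this amounts to solving $\alpha = \frac{\omega p_n + p_{n-1}}{\omega q_n + q_{n-1}}$ for $\omega$; substituting $\alpha q_n - p_n = \theta/q_n$ together with the identity $p_{n-1}q_n - p_n q_{n-1} = (-1)^n$ from Fact \ref{cf:Delta} gives the explicit value
$$ \omega = \frac{(-1)^n}{\theta} - \frac{q_{n-1}}{q_n}. $$
Because $(-1)^n$ and $\theta$ share a sign, $\frac{(-1)^n}{\theta} = \frac{1}{\abs{\theta}} > 2$, while $0 \le \frac{q_{n-1}}{q_n} \le 1$ (using $q_{-1}=0$ and $q_j \ge q_{j-1}$ for $j\ge 1$ from Fact \ref{cf:recursive}), so $\omega > 1$. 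Moreover $\omega \mapsto \frac{\omega p_n + p_{n-1}}{\omega q_n + q_{n-1}}$ is a Möbius transformation with integer entries and determinant $\pm 1$, so $\omega$ is irrational because $\alpha$ is.

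To conclude, expand $\omega = [a_{n+1};a_{n+2},\dots]$ with $a_{n+1} = \floor{\omega} \ge 1$ and all later partial quotients $\ge 1$. Concatenating, $\alpha = [a_0;a_1,\dots,a_n,a_{n+1},a_{n+2},\dots]$ is a continued fraction expansion of $\alpha$ with every partial quotient after $a_0$ at least $1$; by uniqueness of such an expansion for an irrational number, it \emph{is} the continued fraction expansion of $\alpha$, whence $\frac{p}{q} = \frac{p_n}{q_n}$ is literally the $n$-th convergent of $\alpha$, as claimed.

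The only steps requiring genuine care are bookkeeping rather than analysis: verifying that the parity of the length of the expansion of $p/q$ can always be prescribed (covering the degenerate cases $q=1$ and $a_n = 1$), and making the concatenation-plus-uniqueness passage for infinite continued fractions precise. Both are entirely standard, and no inequality in the argument is delicate once $\abs{\theta} < \tfrac12$ has been used.
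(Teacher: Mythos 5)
Your proof is correct, and it is the standard classical argument for Legendre's theorem (essentially what appears in the cited reference; in Khinchin's numbering it is Theorem~19, not Theorem~4 — the paper's citation appears to be off). The paper itself offers \emph{no} proof of this Fact: the appendix preamble explicitly says that the listed facts are standard and are only cited, not proved, so there is no in-text argument to compare against. Your route is the right one and all the steps check out: substituting $\alpha q_n - p_n = \theta/q_n$ and $p_{n-1}q_n - p_n q_{n-1} = (-1)^n$ (Fact~\ref{cf:Delta}) into the relation of Fact~\ref{cf:finite} indeed yields $\omega = (-1)^n/\theta - q_{n-1}/q_n$; the parity normalisation of the finite expansion of $p/q$ via $[a_0;\dots,a_m]=[a_0;\dots,a_m-1,1]$ (or its inverse when $a_m=1$) makes $(-1)^n/\theta = 1/\abs{\theta} > 2$; and $q_{n-1}/q_n \le 1$ then gives $\omega > 1$, so $\lfloor\omega\rfloor \ge 1$ and the concatenated expansion is a genuine continued fraction expansion of~$\alpha$, which by uniqueness forces $p/q$ to be a convergent. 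One small point worth making explicit: you silently assume $\alpha$ is irrational. That matches the context in which the paper uses the notation $p_i/q_i$ (an infinite expansion), and it is the only case the paper ever invokes, so it is a harmless restriction; for rational $\alpha$ the identical argument runs with the finite expansion of $\alpha$, with trivial modifications.
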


For context, we also mention a result which we don't use, even implicitly.
\begin{fact}[Hurwitz]\label{cf:Hurwitz}
	For every $\alpha$ there are $p,q$ such that $\abs{ \alpha - \frac{p}{q} } < \frac{1}{\sqrt{5}q^2}$.  
\end{fact}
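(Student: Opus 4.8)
The plan is to deduce this directly from the continued fraction facts collected above. If $\alpha \in \QQ$ there is nothing to prove, since we may take $p/q = \alpha$ and then $\abs{\alpha - \frac{p}{q}} = 0$; so assume $\alpha$ is irrational and let $p_n/q_n$ denote its convergents. The heart of the matter is the claim that \emph{among any three consecutive convergents $p_{n-1}/q_{n-1}$, $p_n/q_n$, $p_{n+1}/q_{n+1}$ with $n \geq 2$, at least one index $i$ satisfies $\abs{\alpha - \frac{p_i}{q_i}} < \frac{1}{\sqrt 5\, q_i^2}$}. Granting this, infinitely many convergents are good approximations, and in particular one exists.

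To prove the claim I would argue by contradiction, supposing $\abs{\alpha - \frac{p_i}{q_i}} \geq \frac{1}{\sqrt5\, q_i^2}$ for $i \in \{n-1, n, n+1\}$. By Fact \ref{cf:order} the number $\alpha$ lies strictly between $p_{n-1}/q_{n-1}$ and $p_n/q_n$ (they have opposite-parity indices), and by Fact \ref{cf:Delta} their distance equals $\frac{1}{q_{n-1}q_n}$; hence
\[
\frac{1}{q_{n-1}q_n} = \abs{\alpha - \frac{p_{n-1}}{q_{n-1}}} + \abs{\alpha - \frac{p_n}{q_n}} \geq \frac{1}{\sqrt5}\left(\frac{1}{q_{n-1}^2} + \frac{1}{q_n^2}\right).
\]
Multiplying through by $\sqrt5\, q_{n-1}q_n$ and writing $t := q_n/q_{n-1} > 1$ yields $t + t^{-1} \leq \sqrt5$, so $t \leq \varphi := \frac{1+\sqrt5}{2}$, and in fact $t < \varphi$ since $t \in \QQ$ while $\varphi \notin \QQ$. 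Applying the identical reasoning to the pair $(n, n+1)$ gives $q_{n+1}/q_n < \varphi$ as well.

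Finally I would invoke the recursion $q_{n+1} = a_{n+1}q_n + q_{n-1}$ from Fact \ref{cf:recursive}, with $a_{n+1} \geq 1$:
\[
\varphi > \frac{q_{n+1}}{q_n} = a_{n+1} + \frac{q_{n-1}}{q_n} \geq 1 + \frac{1}{t} > 1 + \frac{1}{\varphi} = \varphi,
\]
the last equality being $\varphi^2 = \varphi + 1$. This contradiction establishes the claim, and hence the Fact. There is no genuine obstacle here: the only point requiring a little care is keeping every inequality strict, which is automatic from the irrationality of $\varphi$ together with $a_{n+1} \geq 1$, so the whole argument reduces to the two short displayed estimates once the appendix facts are available.
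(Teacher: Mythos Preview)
Your argument is correct and is the classical continued-fraction proof of Hurwitz's theorem. Note, however, that the paper does not actually prove this Fact: the appendix explicitly says that the listed results are standard and that proofs are omitted in favour of references, and Hurwitz is included only ``for context'' as a result the paper does not use. So there is no proof in the paper to compare against; your write-up simply supplies what the paper left as a citation.
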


Call a fraction $\frac{p}{q}$ a \emph{best rational approximation} (of the second kind, in terminology of \cite{Khinchin2009}) of $\a$ if $\abs{ q \a - p } < \abs{q' \a - p'}$ for any $\frac{p'}{q'} \neq \frac{p}{q} $ with $q' \leq q$.

\begin{fact}[{\cite[Thm. 16, 17]{Khinchin2009}}]\label{cf:best-approx}
	If $\frac{p}{q}$ is a best rational approximation of $\a$, then there exists some $i$ with  $\frac{p}{q} = \frac{p_i}{q_i}$. Conversely, if $i \geq 1$ then $\frac{p_i}{q_i}$ is a best rational approximation of $\a$. In particular, if $\abs{\a - \frac{p}{q}} \leq \abs{\a - \frac{p_i}{q_i}}$ then $q \geq q_i$.
\end{fact}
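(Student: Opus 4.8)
The plan is to reduce everything to the single assertion that, for $n\ge 1$, the convergent $\frac{p_n}{q_n}$ is a best rational approximation of $\alpha$; the forward implication and the ``in particular'' clause then follow by short comparison arguments. Throughout one works with fractions in lowest terms --- automatic for convergents by Fact~\ref{cf:Delta}, and costing nothing in the final clause since reducing $\frac pq$ does not increase $q$ and leaves $\bigl|\alpha-\frac pq\bigr|$ unchanged. The one preliminary needed is an exact formula for the error: writing $\rho_{n+1}:=[a_{n+1};a_{n+2},\dots]$, Fact~\ref{cf:finite} gives $\alpha=\frac{\rho_{n+1}p_n+p_{n-1}}{\rho_{n+1}q_n+q_{n-1}}$, so Fact~\ref{cf:Delta} yields
\[
q_n\alpha-p_n=\frac{(-1)^n}{\rho_{n+1}q_n+q_{n-1}}.
\]
In particular $q_n\alpha-p_n$ has sign $(-1)^n$, and since $1\le a_{n+1}\le\rho_{n+1}<a_{n+1}+1$ and $q_{n+1}=a_{n+1}q_n+q_{n-1}$ (Fact~\ref{cf:recursive}), one checks $\rho_{n+2}q_{n+1}+q_n>\rho_{n+1}q_n+q_{n-1}$, i.e. the sequence $|q_n\alpha-p_n|$ is strictly decreasing.

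The key step is the converse direction. Fix $n\ge 1$ and take any $\frac pq$ with $1\le q\le q_n$ and $\frac pq\ne\frac{p_n}{q_n}$. Since $p_{n-1}q_n-p_nq_{n-1}=\pm1$ (Fact~\ref{cf:Delta}), there is a unique integer solution $(x,y)\ne(0,0)$ of $p=xp_{n-1}+yp_n$, $q=xq_{n-1}+yq_n$, and hence
\[
q\alpha-p=x(q_{n-1}\alpha-p_{n-1})+y(q_n\alpha-p_n).
\]
If $x=0$ then $\frac pq=\frac{p_n}{q_n}$, excluded. If $y=0$ then $|q\alpha-p|=|x|\,|q_{n-1}\alpha-p_{n-1}|\ge|q_{n-1}\alpha-p_{n-1}|>|q_n\alpha-p_n|$ by the monotonicity above. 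If $x,y$ are both nonzero they must have opposite signs, since otherwise $q=|x|q_{n-1}+|y|q_n\ge q_{n-1}+q_n>q_n\ge q$ (using $q_{n-1}\ge q_0=1$, valid as $n\ge1$); but $q_{n-1}\alpha-p_{n-1}$ and $q_n\alpha-p_n$ have opposite signs, so the two summands above have the same sign and $|q\alpha-p|=|x|\,|q_{n-1}\alpha-p_{n-1}|+|y|\,|q_n\alpha-p_n|\ge|q_{n-1}\alpha-p_{n-1}|+|q_n\alpha-p_n|>|q_n\alpha-p_n|$. Thus in every case $|q\alpha-p|>|q_n\alpha-p_n|$, which is exactly the statement that $\frac{p_n}{q_n}$ is a best rational approximation.

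For the forward direction, let $\frac pq$ (reduced) be a best rational approximation; the denominators $q_i$ are strictly increasing to $\infty$ (Fact~\ref{cf:recursive}). If $q<q_1$, an elementary comparison with the competitors $a_0,a_0+1$ (using $[0;a_1,\dots]\in(\tfrac1{a_1+1},\tfrac1{a_1})$) forces $\frac pq=\frac{p_0}{q_0}$; this can only occur when $a_1\ge2$, as otherwise $q_1=1$ and no such $q$ exists. Otherwise fix $i\ge1$ with $q_i\le q<q_{i+1}$. If $q=q_i$, the key step with $n=i$ forces $\frac pq=\frac{p_i}{q_i}$. If $q_i<q<q_{i+1}$, run the computation above with $n=i+1$: the cases $y=0$ (where now $|x|\ge2$) and ``$x,y$ opposite'' each give $|q\alpha-p|>|q_i\alpha-p_i|$, while $x=0$ would force $q=q_{i+1}$; since the competitor $\frac{p_i}{q_i}$ has denominator $q_i<q$, this contradicts $\frac pq$ being a best approximation. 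Hence $\frac pq$ is always a convergent. Finally, for the ``in particular'' clause, assume $\bigl|\alpha-\frac pq\bigr|\le\bigl|\alpha-\frac{p_i}{q_i}\bigr|$; reducing, we may take $\gcd(p,q)=1$, and if $q<q_i$ then $\frac pq\ne\frac{p_i}{q_i}$ and $|q\alpha-p|=q\bigl|\alpha-\frac pq\bigr|<q_i\bigl|\alpha-\frac{p_i}{q_i}\bigr|=|q_i\alpha-p_i|$, contradicting the key step with $n=i$; so $q\ge q_i$.

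The only genuine content lies in the key step: the idea of writing $(p,q)$ in the ``basis'' $\{(p_{n-1},q_{n-1}),(p_n,q_n)\}$ and reading off the sign of each coordinate, together with the trivial size estimate $q\ge q_{n-1}+q_n$ that rules out the same-sign case. Once the exact error formula is in hand, the monotonicity of $|q_n\alpha-p_n|$ and the sign alternation are immediate, and the forward direction and the final clause are then just bookkeeping built on the key step; the one mildly annoying point is the small-denominator range $q<q_1$, which must be handled separately by an elementary estimate.
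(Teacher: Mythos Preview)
Your argument is correct and is precisely the classical proof (the one in Khinchin, to which the paper defers): the paper states this as a Fact in the appendix with a reference and gives no proof of its own. One minor remark: in the ``in particular'' clause you implicitly use $i\ge 1$ to invoke the key step, but the case $i=0$ is trivial since $q_0=1$; it would do no harm to say so explicitly.
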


See also Chapter 6 of {\cite{Khinchin2009}} for different notions of a best rational approximation and more similar results.

Badly approximable numbers can be characterised in terms of their continued fraction expansion. Recall that $\a$ is badly approximable precisely when $\abs{\a - \frac{p}{q}} \gg \frac{1}{q^2}$ for all $\frac{p}{q}$.

\begin{fact}[{\cite[Prop. 3.10]{Einsiedler2010}}]\label{cf:bad-approx}
	The number $\a$ is badly approximable if and only if the sequence $(a_i)_{i=1}^\infty$ is bounded.
\end{fact}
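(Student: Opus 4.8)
The plan is to deduce both implications from the basic facts about convergents collected in the Appendix, after reformulating the defining condition: $\a$ is badly approximable if and only if $\inf_{q\geq1} q\fpa{q\a}>0$, which follows at once by taking $p$ to be the integer nearest to $q\a$, so that $\fpa{q\a}=q\abs{\a-\tfrac pq}$.

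First I would dispose of the easy direction (unbounded partial quotients $\Rightarrow$ not badly approximable, equivalently its contrapositive). If $a_{n+1}$ is large along a subsequence of indices $n$, then Fact \ref{cf:error} gives $q_n\fpa{q_n\a}=q_n^2\abs{\a-\tfrac{p_n}{q_n}}<\tfrac{1}{a_{n+1}}$, which tends to $0$; hence $\inf_q q\fpa{q\a}=0$ and $\a$ is not badly approximable.

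Next, for the converse, assume $a_i\leq M$ for all $i\geq1$. The key steps, in order: (i) control the growth of denominators --- from the recursion in Fact \ref{cf:recursive} and $q_{n-1}\leq q_n$ one gets $q_{n+1}=a_{n+1}q_n+q_{n-1}\leq(M+1)q_n$, hence $q_{n+2}\leq(M+1)^2q_n$; (ii) bound the error at convergents from below --- since the alternating series of Fact \ref{cf:error} has terms of decreasing modulus, $\abs{\a-\tfrac{p_n}{q_n}}\geq\tfrac{1}{q_nq_{n+1}}-\tfrac{1}{q_{n+1}q_{n+2}}=\tfrac{q_{n+2}-q_n}{q_nq_{n+1}q_{n+2}}\geq\tfrac{1}{q_nq_{n+2}}$, using $q_{n+2}-q_n=a_{n+2}q_{n+1}\geq q_{n+1}$, so $q_n\fpa{q_n\a}\geq\tfrac{q_n}{q_{n+2}}\geq\tfrac{1}{(M+1)^2}$; (iii) pass from convergents to arbitrary $q$ --- given $q\geq1$ with nearest fraction $\tfrac pq$, either $\abs{\a-\tfrac pq}\geq\tfrac1{2q^2}$, and then $q\fpa{q\a}=q^2\abs{\a-\tfrac pq}\geq\tfrac12$, or else by Legendre's criterion (Fact \ref{cf:Legendre}) $\tfrac pq=\tfrac{p_n}{q_n}$ is a convergent and step (ii) applies. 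In both cases $q\fpa{q\a}\geq\tfrac1{(M+1)^2}$, so $\a$ is badly approximable with $c(\a)=\tfrac1{(M+1)^2}$.

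The argument is entirely classical, so no step is a genuine obstacle; the only point that needs a moment's care is step (iii), the transition from convergent denominators to all $q$ --- this is precisely what Legendre's theorem (Fact \ref{cf:Legendre}) is for, and one could equally well invoke the best-approximation property (Fact \ref{cf:best-approx}), which gives $\fpa{q\a}\geq\fpa{q_n\a}$ whenever $q_n\leq q<q_{n+1}$ and leads to the same conclusion.
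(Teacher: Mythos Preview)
The paper does not actually prove this fact: it appears in the Appendix, where the author explicitly states ``Because the results are standard, we do not provide proofs, merely references,'' and simply cites \cite[Prop.~3.10]{Einsiedler2010}. Your argument is a correct, self-contained proof of the result using only the facts collected in that same Appendix, so in that sense you have supplied strictly more than the paper does.

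One small point worth tightening in step~(iii): when Legendre's criterion gives $\tfrac{p}{q}=\tfrac{p_n}{q_n}$, this is an equality of rationals, not of numerators and denominators separately; if $d=\gcd(p,q)>1$ then $q=dq_n$ and $p=dp_n$, and one finds $q\fpa{q\a}=d^2\,q_n\fpa{q_n\a}\geq q_n\fpa{q_n\a}$, so the bound from step~(ii) still transfers. Your alternative via Fact~\ref{cf:best-approx} avoids this wrinkle entirely and is arguably cleaner: for $q_n\leq q<q_{n+1}$ one has $\fpa{q\a}\geq\fpa{q_n\a}$, whence $q\fpa{q\a}\geq q_n\fpa{q_n\a}\geq\tfrac{1}{(M+1)^2}$ directly.
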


A particularly important class of badly approximable are the quadratic irrationals.

\begin{fact}[{\cite[Thm. 28]{Khinchin2009}}]\label{cf::quadratic-irrational}
	The expansion $(a_i)_{i=1}^\infty$ of $\alpha$ is eventually periodic if and only if $\alpha$ is a quadratic irrational.
	
	In particular, if $\alpha$ is quadratic irrational then $\a$ is badly approximable.
\end{fact}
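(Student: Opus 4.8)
The statement to prove is Lagrange's theorem on continued fractions, and the plan is to treat the two implications separately; only the second is substantial. Throughout I would write $\rho_n := [a_n;a_{n+1},\dots]$ for the complete quotients, so that $\a=[a_0;a_1,\dots,a_{n-1},\rho_n]$ and, by \ref{cf:finite}, $\a=\frac{\rho_n p_{n-1}+p_{n-2}}{\rho_n q_{n-1}+q_{n-2}}$ for every $n\geq 1$; note also that $\a$ is irrational because its expansion is infinite.

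\emph{Eventual periodicity implies quadratic irrationality.} First I would treat a purely periodic $\a=[\overline{a_0;a_1,\dots,a_{k-1}}]$: here $\a=[a_0;a_1,\dots,a_{k-1},\a]$, so $\a(\a q_{k-1}+q_{k-2})=\a p_{k-1}+p_{k-2}$, i.e. $q_{k-1}\a^2+(q_{k-2}-p_{k-1})\a-p_{k-2}=0$; since $q_{k-1}\geq 1$ and $\a\notin\QQ$, this exhibits $\a$ as a quadratic irrational. In general, if the expansion is eventually periodic then some complete quotient $\rho_m$ is purely periodic, hence a quadratic irrational by the previous case, and $\a=\frac{\rho_m p_{m-1}+p_{m-2}}{\rho_m q_{m-1}+q_{m-2}}\in\QQ(\rho_m)$ is irrational, so quadratic irrational as well.

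\emph{Quadratic irrationality implies eventual periodicity.} Let $A\a^2+B\a+C=0$ with $A,B,C\in\ZZ$, $A\neq 0$. Substituting $\a=\frac{\rho_n p_{n-1}+p_{n-2}}{\rho_n q_{n-1}+q_{n-2}}$ and clearing denominators yields $A_n\rho_n^2+B_n\rho_n+C_n=0$, where $A_n=Ap_{n-1}^2+Bp_{n-1}q_{n-1}+Cq_{n-1}^2$, $C_n=A_{n-1}$, and $B_n=2Ap_{n-1}p_{n-2}+B(p_{n-1}q_{n-2}+p_{n-2}q_{n-1})+2Cq_{n-1}q_{n-2}$. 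One checks that $A_n\neq 0$ — otherwise the rational $p_{n-1}/q_{n-1}$ would be a root of $Ax^2+Bx+C$, but both roots are irrational — and, using $p_{n-1}q_{n-2}-p_{n-2}q_{n-1}=\pm1$ from \ref{cf:Delta}, that $B_n^2-4A_nC_n=B^2-4AC$ for all $n$. The crucial estimate is that $\abs{A_n}$ is bounded independently of $n$: writing $p_{n-1}=q_{n-1}\a+\theta_{n-1}/q_{n-1}$ with $\abs{\theta_{n-1}}<1$ (valid since $\abs{\a-p_{n-1}/q_{n-1}}<1/q_{n-1}^2$ by \ref{cf:error}) and cancelling the $q_{n-1}^2$-term via $A\a^2+B\a+C=0$, one obtains $A_n=2A\a\theta_{n-1}+A\theta_{n-1}^2/q_{n-1}^2+B\theta_{n-1}$, so $\abs{A_n}\leq 2\abs{A\a}+\abs{A}+\abs{B}$. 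Hence $A_n$, and therefore $C_n=A_{n-1}$, take only finitely many integer values, and by the discriminant identity so does $B_n$; thus the triple $(A_n,B_n,C_n)$ ranges over a finite set. Picking $n_1<n_2<n_3$ with equal triples, the numbers $\rho_{n_1},\rho_{n_2},\rho_{n_3}$ are roots of one and the same quadratic, so two coincide, say $\rho_{n_i}=\rho_{n_j}$ with $n_i<n_j$; since a complete quotient determines the tail of the expansion, $a_{n_i+t}=a_{n_j+t}$ for all $t\geq 0$, i.e. the expansion is periodic from position $n_i$ with period $n_j-n_i$. Finally, for the ``in particular'': a quadratic irrational has an eventually periodic expansion, hence bounded partial quotients, hence is badly approximable by \ref{cf:bad-approx}.

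The main obstacle is the uniform bound on $\abs{A_n}$ together with the invariance of the discriminant: once these are in hand, finiteness of $(A_n,B_n,C_n)$ and thus periodicity follow mechanically. That bound is the one genuinely clever point — it relies on the near-cancellation forced by $A\a^2+B\a+C=0$ combined with the rapid convergence $\abs{\a-p_{n-1}/q_{n-1}}<1/q_{n-1}^2$; everything else is routine manipulation of the recursions in \ref{cf:recursive} and the identity \ref{cf:Delta}.
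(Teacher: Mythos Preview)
Your argument is correct and is precisely the classical Lagrange proof (as in the cited \cite[Thm.~28]{Khinchin2009}): transform the defining quadratic via the complete quotients, observe the discriminant invariance through \ref{cf:Delta}, bound $\abs{A_n}$ using $\abs{\a-p_{n-1}/q_{n-1}}<1/q_{n-1}^2$, and pigeonhole on the finitely many triples $(A_n,B_n,C_n)$. The paper itself offers no proof of this fact---it is listed in the appendix among standard results with only a reference---so there is nothing to compare against; you have supplied exactly the argument the citation points to.
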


\bibliographystyle{abbrv}
\bibliography{bibliography}

\begin{thebibliography}{10}

\bibitem{Bugeaud2013}
Y.~Bugeaud.
\newblock Automatic continued fractions are transcendental or quadratic.
\newblock {\em Ann. Sci. \'Ec. Norm. Sup\'er. (4)}, 46(6):1005--1022, 2013.

\bibitem{Deshouillers-1}
J.~M. Deshouillers, P.~Erd\H~os, and A.~S\'ark\"ozi.
\newblock On additive bases.
\newblock {\em Acta Arith.}, 30(2):121--132, 1976.

\bibitem{Deshouillers-2}
J.-M. Deshouillers and E.~Fouvry.
\newblock On additive bases. {II}.
\newblock {\em J. London Math. Soc. (2)}, 14(3):413--422, 1976.

\bibitem{Einsiedler2010}
M.~Einsiedler and T.~Ward.
\newblock {\em Ergodic theory with a view towards number theory}, volume 259 of
  {\em Graduate Texts in Mathematics}.
\newblock Springer-Verlag London, Ltd., London, 2011.

\bibitem{Green2007}
B.~Green and T.~Tao.
\newblock The quantitative behaviour of polynomial orbits on nilmanifolds.
\newblock {\em Ann. of Math. (2)}, 175(2):465--540, 2012.

\bibitem{Host2009}
B.~Host and B.~Kra.
\newblock Nil-{B}ohr sets of integers.
\newblock {\em Ergodic Theory Dynam. Systems}, 31(1):113--142, 2011.

\bibitem{Khinchin2009}
A.~Y. Khinchin.
\newblock {\em Continued fractions}.
\newblock Dover Publications, Inc., Mineola, NY, russian edition, 1997.
\newblock With a preface by B. V. Gnedenko, Reprint of the 1964 translation.

\bibitem{Khovanskii}
A.~N. Khovanskii.
\newblock {\em The application of continued fractions and their generalizations
  to problems in approximation theory}.
\newblock Translated by Peter Wynn. P. Noordhoff N. V., Groningen, 1963.

\bibitem{Roth2010}
K.~F. Roth.
\newblock Rational approximations to algebraic numbers.
\newblock {\em Mathematika}, 2:1--20, 6 1955.

\bibitem{Tao2006}
T.~Tao and V.~H. Vu.
\newblock {\em {Additive Combinatorics}}.
\newblock Cambridge University Press, 2006.

\end{thebibliography}

\end{document}